\theoremstyle{definition}
\newtheorem{thm}{Theorem}[section]
\newtheorem{defn}[thm]{Definition}
\newtheorem{prop}[thm]{Proposition}
\newtheorem{rem}[thm]{Remark}
\newtheorem{cor}[thm]{Corollary}
\newcommand{\bbf}{\mathbf f}
\newcommand{\bn}{\mathbf n}
\newcommand{\bg}{\mathbf g}
\newcommand{\bA}{\mathbf A}
\newcommand{\bN}{\mathbf N}
\newcommand{\bD}{\mathbf D}
\newcommand{\bI}{\mathbf I}
\newcommand{\bzero}{\mathbf 0}
\newcommand{\bone}{\mathbf 1}
\newcommand{\bv}{\mathbf v}
\newcommand{\by}{\mathbf y}
\newcommand{\bw}{\mathbf w}
\newcommand{\bu}{\mathbf u}
\newcommand{\R}{\mathbb R}
\newcommand{\C}{\mathbb C}
\newcommand{\tm}{\subseteq}
\newcommand{\abs}[1]{\lvert #1\rvert}
\newcommand{\norm}[1]{\lVert #1\rVert}
\newcommand{\qta}{\quad\text{ and }\quad}
\newcommand{\ii}{\mathrm{i}}
\newcommand{\N}{\mathbb{N}}
\renewcommand{\Vec}[1]{\renewcommand*{\arraystretch}{1.2}\begin{pmatrix*}[r]#1\end{pmatrix*}}
\newcommand{\from}{\colon}
\DeclareMathOperator{\diag}{diag}
\DeclareMathOperator{\Span}{span}
\DeclareMathOperator{\re}{Re}
\newcommand{\vast}{\bBigg@{3}}
\newcommand{\Vast}{\bBigg@{4}}
\newif\ifcenter@asb@\center@asb@false
\def\center@arstrutbox{%
	\setbox\@arstrutbox\hbox{$\vcenter{\box\@arstrutbox}$}%
}
\newcommand*{\CenteredArraystretch}[1]{%
	\ifcenter@asb@\else
	\pretocmd{\@mkpream}{\center@arstrutbox}{}{}%
	\center@asb@true
	\fi
	\renewcommand{\arraystretch}{#1}%
}
\definecolor{colorA}{rgb}{0,0.447,0.741}
\definecolor{colorB}{rgb}{0.85,0.325,0.098}
\definecolor{colorE}{rgb}{0.929,0.694,0.125}
\definecolor{colorF}{rgb}{0.494,0.184,0.556}
\definecolor{colorD}{rgb}{0.466,0.674,0.188}
\definecolor{colorC}{rgb}{0.301,0.745,0.933}
\definecolor{colorG}{rgb}{0.635,0.078,0.184}
\newlength{\tickl}    
\tikzset{axes/.style={thick,-latex}}
\tikzset{lineplot/.style={thick}}
\tikzset{arrow/.style={thick,-latex}} 
\tikzset{thick arrow/.style={ultra thick,-latex}} 
\tikzset{grid lines/.style={very thin,gray!30}}	
\tikzset{point/.style={radius=2pt}}
\tikzset{help line/.style={black,thin,dashed}} 
\newsavebox{\measure@tikzpicture}
	\def\tikz@width{#1}%
\begin{document}

\title{On the stability of strong-stability-preserving modified Patankar Runge-Kutta schemes}
\author[1]{Juntao Huang}
\author[2]{Thomas Izgin} 
\author[2]{Stefan Kopecz}
\author[2]{Andreas Meister}
\author[3]{Chi-Wang Shu}

\affil[1]{Department of Mathematics, Michigan State University, East Lansing, MI 48824, USA}
\affil[1]{huangj75@msu.edu}
\affil[2]{Department of Mathematics and Natural Sciences, University of Kassel, Germany}
\affil[2]{izgin@mathematik.uni-kassel.de\ \&\ kopecz@mathematik.uni-kassel.de\ \&\ meister@mathematik.uni-kassel.de}
\affil[3]{Division of Applied Mathematics, Brown University, Providence, RI 02912, USA}
\affil[3]{chi-wang\_shu@brown.edu}
\setcounter{Maxaffil}{0}
\renewcommand\Affilfont{\itshape\small}
\date{}
	\maketitle
	\begin{abstract} 
		In this paper, we perform stability analysis for a class of second and third order accurate
		strong-stability-preserving modified Patankar Runge-Kutta (SSPMPRK) schemes,
		which were introduced in \cite{MR3934688,MR3969000} and can be used to solve convection equations with stiff source terms, such as reactive
		Euler equations, with guaranteed positivity under the standard CFL condition due to the
		convection terms only.
		The analysis allows us to identify
		the range of free parameters in these SSPMPRK schemes in order to ensure stability.
		Numerical experiments are provided to demonstrate the validity of the analysis.
	\end{abstract}

	%
	%

	\section{Introduction}\label{sec:intro}

Recently, structure-preserving numerical methods have attracted much attention due to many successful applications. 
The modified Patankar Runge-Kutta (MPRK) method was firstly introduced in \cite{BDM2003} and preserves the positivity and conservativity of the numerical solution of positive and conservative production-destruction systems (PDS). The PDS have the following form:
\begin{equation}\label{eq:proDesODEs}
	y'_i = P_i(\by) - D_i(\by), \quad i = 1,\dotsc, N
\end{equation}
with
\begin{equation}
	P_i(\by) = \sum_{j=1}^N p_{ij}(\by), \quad 	D_i(\by) = \sum_{j=1}^N d_{ij}(\by)
\end{equation}
and 
\begin{equation}
	p_{ij}(\by) = d_{ji}(\by) \ge 0.
\end{equation}
Here $\by=(y_1, \dotsc, y_N)^T$ and $y_i=y_i(t)> 0$ denotes the concentration of the $i$-th component. The production function $p_{ij}(\by)$ denotes the rate at which the $j$-th component transforms into the $i$-th component, while the destruction function $d_{ij}(\by)$ denotes the rate at which the $i$-th component transforms into the $j$-th component. The exact solutions of \eqref{eq:proDesODEs} share the conservation property, i.e., $\sum_{i=1}^N y_i(t)$ remains unchanged with respect to time $t$. Also, the positivity of the solution is guaranteed as long as the initial condition is positive and $d_{ij}(\by)=0$ for $y_i =0$ \cite{BDM2003}.

There has been a considerable interest in the development of MPRK schemes in recent years. In\cite{KM18,KM18Order3}, MPRK schemes of second and third order were introduced. In \cite{MR4064785}, the authors adapted the modified Patankar trick to deferred correction (DeC) schemes and developed MPDeC schemes of arbitrary order of accuracy. 
In \cite{MR3934688,MR3969000}, instead of using the Runge-Kutta (RK) schemes in the classical form, the authors rewrote the RK 
schemes in the Shu-Osher form \cite{shu1988efficient} and developed another class of MPRK schemes 
for \eqref{eq:proDesODEs}, the so-called strong-stability-preserving MPRK (SSPMPRK) schemes. The purpose was to match the treatment of the convection terms and the
reaction terms in the same RK framework.  This framework was then applied to semi-discrete schemes arising from 
multispecies reactive Euler equations, in which the convection parts were treated with the SSPRK method \cite{gottlieb2001strong} and the stiff reactive source terms were treated with the MPRK method. 
Combining with the  finite-difference WENO schemes, the positivity-preserving WENO scheme was obtained. It is notable 
that, to guarantee the positivity of the numerical solution, the time step size was only constrained by the maximum 
characteristic speeds of the convection term and was independent of the stiffness of the reactive sources.
Accuracy and positivity-preservation were analyzed in \cite{MR3934688,MR3969000}.

In this paper, following the lines of \cite{IKM22Sys}, we investigate the stability behavior of the above mentioned numerical methods in \cite{MR3934688,MR3969000} applied to the stable linear positive and conservative PDS of the form
	\begin{equation}
		\by'(t)=\bA\by(t) \label{eq:Dahlquist_System}
	\end{equation}
	with $\bA\in \R^{N\times N}$ 
	and the initial condition
	\begin{equation}
		\by(0)=\by^0>\bzero \label{eq:IC}.
	\end{equation}
	 The presence of exactly $k>0$ linear invariants means that there exist vectors $\bn_1,\dotsc,\bn_k$ which form a basis of $\ker(\bA^T)$, and hence, satisfy that $\bn_i^T\by(t)=\bn_i^T\by^0$ for all $t\geq 0$ and $i=1,\dotsc, k$.
	The conservativity means that $\bone\in\ker(\bA^T)$.
	In addition, the system \eqref{eq:Dahlquist_System} is positive if and only if the matrix $\bA$ is a Metzler matrix, i.\,e.\ a matrix with nonnegative off-diagonal elements, see \cite{Luen79}, which guarantees $\by(t)>\bzero$ for all $t>0$ whenever $\by^0>\bzero$. 
	Moreover, to ensure stable steady states $\by^*\in \ker(\bA)$,
	the matrix $\bA$ in \eqref{eq:Dahlquist_System} must have a spectrum $\sigma(\bA)\tm \C^-=\{z\in \C\mid \operatorname{Re}(z)\leq0\}$ and the eigenvalues of $\bA$ with vanishing real part have to be associated with a Jordan block size of 1, see \cite[Theorem 3.23]{MR1912409}. Indeed, the same theorem states that no steady state of the test equation \eqref{eq:Dahlquist_System} is asymptotically stable.
	
The paper is organized as follows. In Section \ref{sec:stab_dyn_sys}, we summarize important definitions and results concerned with the stability of fixed points. In Section \ref{sec:stab_SSPMPRK}, we apply Theorem \ref{Thm:_Asym_und_Instabil} and Theorem \ref{Thm_MPRK_stabil} to investigate the stability of the second and third order SSPMPRK schemes from \cite{MR3934688, MR3969000}. Numerical experiments are presented in Section \ref{sec:Num_Tests} to validate the theoretical analysis. Finally, we collect our conclusions and future research topics in Section \ref{sec:Summary}.

	\section{Stability Theory}\label{sec:stab_dyn_sys}

In the following, we summarize the main results on the stability of fixed points of a potentially nonlinear mapping $\bg\colon D\to D$ with $D\tm \R^N$. Thereby, we use $\norm{\ \cdot\  }$ to represent an arbitrary norm in $\R^l$ for $l\in \N$ and $\bD\bg$ denotes the Jacobian of the map $\bg$. 
%
\begin{defn}\label{Def_Lyapunov_Diskr}
	Let $\by^*$ be a fixed point of an iteration scheme $\by^{n+1}=\bg(\by^n)$, that is $\by^*=\bg(\by^*)$. 
	\begin{enumerate}
		\item\label{def:stab} $\by^*$ is called \emph{Lyapunov stable} if, for any $\epsilon>0$, there exists a $\delta=\delta(\epsilon)>0$ such that $\norm{\by^0-\by^*}<\delta$ implies $\norm{\by^n- \by^*}<\epsilon$ for all $n\geq 0$.
		\item If in addition to a), there exists a constant $c>0$ such that $\Vert \by^0-\by^*\Vert<c$ implies $\Vert \by^n-\by^*\Vert \to 0$ for $n\to \infty$, we call $\by^*$ \emph{asymptotically stable.}
		\item A fixed point that is not Lyapunov stable is said to be \emph{unstable}.
	\end{enumerate}
\end{defn}
In the following, we will also briefly speak of stability instead of Lyapunov stability. The next theorem gives sufficient conditions for the analysis of a general $\mathcal C^1$-map $\bg$ based on its Jacobian.
\begin{thm}[{\cite[Theorem 1.3.7]{SH98}}]\label{Thm:_Asym_und_Instabil}
	Let  $\by^{n+1}=\bg(\by^n)$ be an iteration scheme with fixed point $\by^*$. Suppose the Jacobian $\bD\bg(\by^*)$ exists and denote its spectral radius by $\rho(\bD\bg(\by^*))$. Then
	\begin{enumerate}
		\item $\by^*$ is asymptotically stable if $\rho(\bD\bg(\by^*))<1$. 
		\item $\by^*$ is unstable if $\rho(\bD\bg(\by^*))>1$.
	\end{enumerate}
\end{thm}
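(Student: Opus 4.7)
My plan is to apply the classical \emph{linearization principle} for discrete dynamical systems: near a hyperbolic fixed point the iteration is controlled by its linear part $\bD\bg(\by^*)$, and the spectral radius dictates whether nearby orbits contract or expand. In both parts the first order Taylor expansion $\bg(\by)-\by^*=\bD\bg(\by^*)(\by-\by^*)+\mbfr(\by)$ with $\norm{\mbfr(\by)}/\norm{\by-\by^*}\to 0$ as $\by\to\by^*$ serves as the bridge between the linear spectral information and the nonlinear iteration.

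For part a), I would first invoke the well-known consequence of Gelfand's formula: for any matrix $\bM\in \R^{N\times N}$ and any $\epsilon>0$ there exists a vector norm on $\R^N$ whose induced operator norm satisfies $\norm{\bM}_*\leq \rho(\bM)+\epsilon$. Choosing $\epsilon>0$ with $q:=\rho(\bD\bg(\by^*))+\epsilon<1$ provides a norm in which $\bD\bg(\by^*)$ is a strict contraction. Combined with the Taylor estimate above, this yields $\norm{\bg(\by)-\by^*}_*\leq q'\norm{\by-\by^*}_*$ for some fixed $q'\in(q,1)$ on a sufficiently small ball $B$ around $\by^*$. By induction, $\norm{\by^n-\by^*}_*\leq (q')^n\norm{\by^0-\by^*}_*$ whenever $\by^0\in B$, giving both Lyapunov stability (via the appropriate choice of $\delta$) and asymptotic decay. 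Equivalence of norms on $\R^N$ transfers the conclusion to the original norm.

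For part b), I would argue by constructing an \emph{invariant expanding cone} around the unstable subspace and deriving a contradiction to Lyapunov stability. Concretely, split $\R^N=E^u\oplus E^s$ into the real generalized eigenspaces of $\bD\bg(\by^*)$ corresponding to eigenvalues with $\abs{\lambda}>\beta$ and $\abs{\lambda}\leq \beta$, respectively, where $\beta$ is chosen so that $1<\beta<\rho(\bD\bg(\by^*))$. Using the real Jordan canonical form one can pick adapted norms such that $\bD\bg(\by^*)$ expands by at least some $\mu>1$ on $E^u$ while being bounded by some $\nu<\mu$ on $E^s$. Define the cone $C=\{\bv\in \R^N : \norm{\pi_u \bv}_*\geq \norm{\pi_s\bv}_*\}$, where $\pi_u$, $\pi_s$ denote the canonical projections onto $E^u$ and $E^s$. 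Taylor's theorem then shows that in a small enough neighborhood of $\by^*$ the nonlinear iterate inherits both cone invariance and strict expansion of the $E^u$-component by some factor $\tilde\mu>1$. Consequently any orbit started inside $C$ arbitrarily close to $\by^*$ must eventually leave every prescribed neighborhood, contradicting Lyapunov stability.

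The genuinely subtle step is the construction in part b) when $\bD\bg(\by^*)$ has a mixed spectrum, in particular when some eigenvalues lie on or inside the unit circle together with dominant ones outside, or when the dominant eigenvalue is complex so that $E^u$ is genuinely multidimensional. The key is to exploit the strict inequality $\rho(\bD\bg(\by^*))>1$ to insert the auxiliary radius $\beta$ into the annulus separating the dominant part from the remaining spectrum; once this separation is available, controlling the Taylor remainder against the strict expansion factor $\mu$ is routine and the cone argument closes.
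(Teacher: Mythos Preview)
Your argument is correct and follows the classical linearization route for discrete dynamical systems: an adapted norm gives a genuine contraction in part~a), and an invariant-cone construction around the unstable generalized eigenspace yields the escape in part~b). Both steps are standard and your handling of the delicate points (choosing $\beta$ in the spectral gap, controlling the Taylor remainder against the strict expansion factor) is appropriate.

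There is, however, nothing to compare against: the paper does not supply its own proof of this theorem. The statement is imported directly from \cite[Theorem~1.3.7]{SH98} and used as a black box throughout Section~\ref{sec:stab_SSPMPRK}; the authors' contribution lies in verifying the hypotheses of this theorem (and of Theorem~\ref{Thm_MPRK_stabil}) for the SSPMPRK maps, not in reproving the abstract stability criterion itself. So your write-up is a self-contained justification of a result the paper merely cites.
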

Unfortunately, the above theorem does not provide the condition to conclude the stability of a fixed point that is not asymptotically stable. However, as mentioned in the introduction, we consider the linear system \eqref{eq:Dahlquist_System} that possesses only stable but not asymptotically stable steady states. As a numerical scheme should transfer the properties of steady states to those of the corresponding fixed points of the method, we are concerned with stable but not asymptotically stable fixed points. According to Theorem \ref{Thm:_Asym_und_Instabil}, such a fixed point $\by^*$ of a $\mathcal C^1$-map $\bg$ must correspond to the case $\rho(\bD\bg(\by^*))=1$, from which it follows that $\by^*$ is non-hyperbolic. The analysis of non-hyperbolic fixed points is more delicate since the analysis of the linearized method is generally not enough to understand the stability properties of the fixed point. However, under certain circumstances, the stability of a non-hyperbolic fixed point still can be determined by means of a linearization as the result from \cite{IKM22Sys} states.

To formulate the theorem, we introduce the matrix
\begin{equation}\label{eq:N}
	\bN=\begin{pmatrix}
		\bn_1^T\\
		\vdots\\
		\bn_k^T
	\end{pmatrix}\in \R^{k\times N} ,
\end{equation}
where $\bn_1,\dotsc,\bn_k$ form a basis of $\ker(\bA^T)$ with $\bA$ from \eqref{eq:Dahlquist_System} and define
\begin{equation}
	H=\{\by\in \R^N\mid \bN\by=\bN\by^*\}\label{eq:H}.
\end{equation}
We point out that under the assumption $\by\in H\cap D$ we obtain $\bg(\by)\in H\cap D$, if and only if $\bg$ conserves all linear invariants. 
\begin{thm}[{\cite[Theorem 2.9]{IKM22Sys}}]\label{Thm_MPRK_stabil}
	Let $\bA\in \R^{N\times N}$ such that $\ker(\bA)=\Span(\bv_1,\dotsc,\bv_k)$ represents a $k$-dimensional subspace of $\R^N$ with $k>0$. Also, let $\by^*\in \ker(\bA)$ be a fixed point of $\bg\from D\to D$ where $D\tm \R^N$ contains a neighborhood $\mathcal D$ of $\by^*$. Moreover, let any element of $C=\ker(\bA)\cap \mathcal D$ be a fixed point of $\bg$ and suppose that $\bg\big|_\mathcal{D}\in \mathcal C^1$ as well as that the first derivatives of $\bg$ are Lipschitz continuous on $\mathcal{D}$. Then $\bD\bg(\by^*)\bv_i=\bv_i$ for $i=1,\dotsc, k$ and the following statements hold:
	\begin{enumerate}
		\item\label{it:Thma} If the remaining $(N-k)$ eigenvalues of $\bD\bg(\by^*)$ have absolute values smaller than $1$, then $\by^*$ is stable.\label{It:Thm_Stab_a}
		\item\label{it:Thmb} Let $H$ be defined by \eqref{eq:H} and $\bg$ conserve all linear invariants, which means that $\bg(\by)\in H\cap D$ for all $\by\in H\cap D$. If additionally the assumption of \ref{It:Thm_Stab_a} is fulfilled, then there exists a $\delta>0$ such that $\by^0\in H\cap D$ and $\norm{\by^0-\by^*}<\delta$ imply $\by^n\to \by^*$ as $n\to \infty$.
	\end{enumerate}
\end{thm}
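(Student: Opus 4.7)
The plan is to first verify the linear-algebraic claim $\bD\bg(\by^*)\bv_i=\bv_i$ and then exploit it to set up coordinates in which the iteration near $\by^*$ splits into a contracting part and a neutral drift. For the eigenvector claim, the hypothesis that every element of $C=\ker(\bA)\cap\mathcal D$ is a fixed point implies that for each $i$ the curve $t\mapsto\by^*+t\bv_i$ lies in $C$ for all sufficiently small $|t|$ and is therefore fixed pointwise by $\bg$. Differentiating $\bg(\by^*+t\bv_i)=\by^*+t\bv_i$ at $t=0$, which is justified by $\bg\in\mathcal C^1$, yields $\bD\bg(\by^*)\bv_i=\bv_i$ for $i=1,\dots,k$.

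For part \ref{it:Thma}, I would work in coordinates adapted to the spectrum of $\bD\bg(\by^*)$. Because $\bv_1,\dots,\bv_k$ are linearly independent eigenvectors to the eigenvalue $1$ and the remaining $N-k$ eigenvalues have modulus strictly less than $1$, the generalized eigenspace to $1$ coincides with $\ker(\bA)$ and there exists a $\bD\bg(\by^*)$-invariant complement $W$ with $\R^N=\ker(\bA)\oplus W$. Setting $\bM_0=\bD\bg(\by^*)|_W$, one has $\rho(\bM_0)<1$, so a standard construction produces a norm $\|\cdot\|_*$ on $W$ with $\|\bM_0\|_*\leq\lambda_*<1$. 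Writing $\by=\by^*+\sum_i c_i\bv_i+\bw$ with $\bw\in W$, the assumption that the slice $\{(c,\bzero)\}$ consists entirely of fixed points forces $\bg$ to take the block form
\begin{equation*}
c' = c + \bS(c,\bw)\bw,\qquad \bw' = \bJ(c)\bw + \bT(c,\bw),
\end{equation*}
where $\bJ(0)=\bM_0$, $c\mapsto\bJ(c)$ is Lipschitz, $\|\bT(c,\bw)\|=O(\|\bw\|^2)$, and $\bS(c,\bw)$ is bounded, all uniformly on a neighborhood of $(0,\bzero)$. In particular there exist $\lambda\in(\lambda_*,1)$ and a smaller neighborhood on which $\|\bJ(c)\|_*\leq\lambda$.

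Stability then follows from a standard induction: fixing $\mu\in(\lambda,1)$ and shrinking the neighborhood so that the quadratic term $\bT$ is absorbed into the contraction, one obtains $\|\bw^{n+1}\|_*\leq\mu\|\bw^n\|_*$ and $\|c^{n+1}-c^n\|\leq C\|\bw^n\|_*$ as long as the iterates remain in the neighborhood. The first estimate gives $\|\bw^n\|_*\leq\mu^n\|\bw^0\|_*$, and summing the second yields $\|c^n-c^0\|\leq C(1-\mu)^{-1}\|\bw^0\|_*$; hence, if $\|\by^0-\by^*\|$ is small enough, the iterates stay within any prescribed neighborhood of $\by^*$, closing the induction. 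This proves Lyapunov stability and additionally shows $\bw^n\to\bzero$ together with $c^n\to c^\infty$ for some limit $c^\infty$, so $\by^n\to\by^\infty:=\by^*+\sum_i c^\infty_i\bv_i\in\ker(\bA)$.

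For part \ref{it:Thmb}, conservation gives $\by^n\in H$ for all $n$, and since $H$ is closed the limit $\by^\infty$ also lies in $H$, hence $\by^\infty-\by^*\in\ker(\bA)\cap\ker(\bN)$. Using $\ker(\bN)=\ker(\bA^T)^\perp=\im(\bA)$, this intersection equals $\ker(\bA)\cap\im(\bA)$, which vanishes because the Jordan block size of $\bA$ for eigenvalue $0$ equals $1$ in the stable-steady-state setting. Thus $\by^\infty=\by^*$ and $\by^n\to\by^*$. The main obstacle is the inductive estimate in part \ref{it:Thma}: the contraction on $W$ is only guaranteed in the adapted norm and only for $c$ close to $0$, while the drift of $c$ is merely linear in $\|\bw\|$, so the spectral margin $\mu$, the size of the neighborhood, and the threshold $\delta$ must be chosen in the correct order to ensure that the drift remains summable and $c^n$ never leaves the region in which the spectral estimate $\|\bJ(c)\|_*\leq\lambda$ is valid.
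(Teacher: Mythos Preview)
The paper does not prove this theorem; it is quoted from \cite{IKM22Sys} and used as a black box. Your argument is therefore not being compared against a proof in the present paper, but it stands on its own as a correct and standard treatment.

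Your derivation of $\bD\bg(\by^*)\bv_i=\bv_i$ by differentiating along the line of fixed points is exactly right. For part \ref{it:Thma}, the splitting $\R^N=\ker(\bA)\oplus W$ into the eigenspace for $1$ and its $\bD\bg(\by^*)$-invariant complement, the block form forced by the fact that the entire slice $\{(c,\bzero)\}$ is fixed, and the induction combining geometric decay of $\bw^n$ with a summable drift in $c^n$ constitute the usual proof of Lyapunov stability for a manifold of fixed points that is normally attracting at the linear level. The Lipschitz hypothesis on the first derivatives is precisely what yields the quadratic remainder $\|\bT(c,\bw)\|=O(\|\bw\|^2)$, and you invoke it at the right place.

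One point deserves to be made explicit. In part \ref{it:Thmb} your conclusion $\by^\infty=\by^*$ rests on $\ker(\bA)\cap\im(\bA)=\{0\}$, i.e.\ on the eigenvalue $0$ of $\bA$ being semisimple. This is not written into the theorem statement as reproduced here, and without it the conclusion can fail: for $N=2$ and $\bA=\bigl(\begin{smallmatrix}0&1\\0&0\end{smallmatrix}\bigr)$ one has $H=\ker(\bA)$, which consists entirely of fixed points, so no convergence to a distinguished $\by^*$ occurs. You are right that semisimplicity of $0$ is part of the paper's standing assumptions on $\bA$ (Jordan block size $1$ for eigenvalues with vanishing real part), and it is also built into the setup of \cite{IKM22Sys}; still, it is cleaner to list it as an explicit hypothesis rather than to absorb it into ``the stable-steady-state setting''.
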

We would like to mention that the second part of the above theorem does not imply that the fixed point is asymptotically stable but rather attracting for appropriately chosen starting vectors $\by^0$. Indeed, in the situation of Theorem \ref{Thm_MPRK_stabil} no $\by^*\in \ker(\bA)\cap D$ is asymptotically stable as in any neighborhood of $\by^*$ there exist infinitely many other fixed points. 
\begin{rem}\label{rem:C2->C1}
As a final remark, we note that if $\bg\in \mathcal{C}^2$, then we may choose $\mathcal D\tm D$ in such a way that $\overline{\mathcal D}\tm D$. As a result the second derivatives are bounded on the compact set $\bar{\mathcal D}$, so that the first derivatives are Lipschitz continuous due to the mean value theorem. Therefore, $\bg$ restricted to $\mathcal D$ is a $\mathcal C^1$-map with Lipschitz continuous derivatives.
\end{rem}

	\section{Stability of SSPMPRK schemes}\label{sec:stab_SSPMPRK}
	As SSPMPRK schemes can only be directly applied to positive and conservative PDS, we assume that the linear test equation \eqref{eq:Dahlquist_System} is conservative (i.e., $\bone\in\ker(\bA^T)$) and the system matrix $\bA$ is a Metzler matrix. Then the test equation can be rewritten as a positive and conservative PDS with $p_{ij}(\by) = d_{ji}(\by) = a_{ij}y_j$ for $i\neq j$ and $p_{ii}=d_{ii}=0$. 
Moreover, from $\bone\in\ker(\bA^T)$, one can easily derive $\sum_{j=1}^N a_{ji} = 0$ and thus obtain
\begin{equation}\label{eq:-sum(dij)}
-\sum_{\substack{j=1}}^Nd_{ij}(\by) = -\sum_{\substack{j=1\\j\neq i}}^Na_{ji}y_i = a_{ii}y_i,
\end{equation}
which will be used in the following to write the SSPMPRK schemes in the matrix-vector notation.
\subsection{Second order SSPMPRK Scheme}
The second order SSPMPRK scheme for solving \eqref{eq:proDesODEs}, introduced in \cite{MR3934688}, is given by
\begin{equation}\label{eq:SSPMPRK2}
\begin{aligned}
y_i^{(1)}={}& y_i^n+\beta \Delta t\left(\sum_{j=1}^Np_{ij}(\by^n)\frac{y_j^{(1)}}{y_j^n}- \sum_{j=1}^Nd_{ij}(\by^n)\frac{y_i^{(1)}}{y_i^n}\right),\\
y_i^{n+1}={}& (1-\alpha)y_i^n+\alpha y_i^{(1)}\\&+\Delta t\Biggl(\sum_{j=1}^N\left(\beta_{20}p_{ij}(\by^n)+\beta_{21}p_{ij}(\by^{(1)})\right)\frac{y_j^{n+1}}{(y_j^n)^{1-s}(y_j^{(1)})^s}  \\&- \sum_{j=1}^N\left(\beta_{20}d_{ij}(\by^n)+\beta_{21}d_{ij}(\by^{(1)})\right)\frac{y_i^{n+1}}{(y_i^n)^{1-s}(y_i^{(1)})^s}\Biggr),
\end{aligned}
\end{equation}
where $\beta_{20}=1-\frac{1}{2\beta}-\alpha\beta$, $\beta_{21}=\frac{1}{2\beta}$ and $s=\frac{1-\alpha\beta+\alpha\beta^2}{\beta(1-\alpha\beta)}$. Thereby, the free parameters $\alpha$ and $\beta$ satisfy
\begin{equation}
0\leq \alpha\leq 1,\quad \beta>0,\quad \alpha\beta+\frac{1}{2\beta}\leq 1.\label{eq:alphabeta_conditions}
\end{equation}
We refer to the above scheme as SSPMPRK2($\alpha,\beta$).
When applied to \eqref{eq:Dahlquist_System}, the terms $p_{ij}$ and $d_{ij}$ fulfill \eqref{eq:-sum(dij)}. 
As a consequence, the scheme \eqref{eq:SSPMPRK2} can be rewritten as
\begin{equation}\label{eq:SSPMPRK2Matrixvector}
\begin{aligned}
\bzero=&\bm \Phi_1(\by^n,\by^{(1)}) = \by^n+\beta \Delta t\bA\by^{(1)}-\by^{(1)},\\
\bzero=&\bm \Phi_{n+1}(\by^n,\by^{(1)},\by^{n+1}) = (1-\alpha)\by^n+\alpha\by^{(1)}\\&+\Delta t\bA\diag(\by^{n+1})(\diag(\by^{(1)}))^{-s}(\diag(\by^{n}))^{s-1}(\beta_{20}\by^n+\beta_{21}\by^{(1)})-\by^{n+1},\\
\end{aligned}
\end{equation}
where we use the notation $(\diag(\by))_{ij}=\delta_{ij}y_i$ with the Kronecker delta $\delta_{ij}$ as well as $((\diag(\by))^{x})_{ij}=\delta_{ij}y_i^{x}$ for $x\in\R$.  Furthermore, $\by^{(1)}=\by^{(1)}(\by^n)$ and $\by^{n+1}=\bg(\by^n)$ defined by \eqref{eq:SSPMPRK2Matrixvector} are functions of $\by^n$. In order to apply Theorem \ref{Thm:_Asym_und_Instabil} and Theorem \ref{Thm_MPRK_stabil}, we have to investigate the map $\bg$ with respect to its smoothness as well as steady state and linear invariants preservation. 

First of all, we show that $\bg\in \mathcal C^2$ and then use Remark \ref{rem:C2->C1} in order to see that the first derivatives are Lipschitz continuous on an appropriately chosen neighborhood $\mathcal D$ of $\by^*$. Indeed, the maps $\bm \Phi_1\colon\R^N_{>0}\times \R^N_{>0}\to \R^N$ and $\bm \Phi_{n+1}\colon\R^N_{>0}\times \R^N_{>0}\times \R^N_{>0}\to \R^N$ are in $\mathcal C^2$, and as defined in \eqref{eq:SSPMPRK2Matrixvector}, vanish for the argument $(\by^n,\by^{(1)}(\by^n))$ and $(\by^n,\by^{(1)}(\by^n),\bg(\by^n))$, respectively. And since the computation of $\by^{n+1}$ requires only the solution of linear systems which possess always a unique solution for any $\by^n>\bzero$, the function $\bg$ is also a $C^2$-map.

Next, we show that any positive steady state of \eqref{eq:Dahlquist_System} is a fixed point of $\bg$. To see this, we want to mention that $\by^n=\by^{(1)}=\by^{n+1}=\by^*$ is a solution to the system of equations \eqref{eq:SSPMPRK2Matrixvector} due to $\bA\by^*=\bzero$. Since the solution for given $\by^n$ is unique, we conclude that $\by^n=\by^*$ implies $\by^{(1)}=\by^{n+1}=\by^*$, i.\,e.\ $\bg(\by^*)=\by^*$.
 
Moreover, $\bg$ conserves all linear invariants since $\bn^T\bA=\bzero$ and \eqref{eq:SSPMPRK2Matrixvector} imply
 \begin{equation*}
 	\bn^T\bg(\by^n)=\bn^T\by^{n+1}=(1-\alpha)\bn^T\by^n+\alpha\bn^T\by^{(1)}+\bzero=(1-\alpha)\bn^T\by^n+\alpha\bn^T(\by^{n}+\beta \Delta t\bA \by^{(1)})=\bn^T\by^n.
 \end{equation*}
Therefore, the map $\bg\colon \R^N_{>0}\to \R^N_{>0}$ meets the assumptions of Theorem \ref{Thm_MPRK_stabil}, so that we now focus on computing the Jacobian $\bD\bg(\by^*)$. Instead of calculating $\bg$ explicitly, we take advantage of the fact that $\bg(\by^n)=\by^{n+1}$ occurs as an argument within the function $\bm\Phi_{n+1}$, we prefer to compute its total derivative $\bD \bm\Phi_{n+1}$ and solve for $\bD\bg(\by^*)$. Doing so, we have to compute several partial derivatives. The Jacobian is obtained by 
differentiating with respect to the first argument, and we use the notation $\bD_n$  for the corresponding operator as 
we plug in $\by^n$ in \eqref{eq:SSPMPRK2Matrixvector} as the first variable. Similarly, we introduce the operators 
$\bD_{1}$ for the derivatives with respect to the variable where we plugged in $\by^{(1)}$, and similarly, the 
operator $\bD_{n+1}$. As we are interested in the Jacobian of $\bg$ evaluated at $\by^n=\by^*$, we plug in the 
values $(\by^*,\by^{(1)}(\by^*))$ or  $(\by^n(\by^*),\by^{(1)}(\by^*),\bg(\by^*))$, respectively. To indicate this in 
the following formula, we use the notation $\bD_n^*$, $\bD_1^*$ and $\bD_{n+1}^*$. Therefore, we obtain
\begin{equation*}
	\bzero=\bD^*\bm\Phi_{n+1}=	\bD^*_n\bm \Phi_{n+1}+ \bD^*_1\bm \Phi_{n+1}\bD^*\by^{(1)}+\bD^*_{n+1}\bm \Phi_{n+1}\bD\bg{(\by^*)},
\end{equation*}
where $\bD^*\by^{(1)}$ is the Jacobian of $\by^{(1)}(\by^n)$ evaluated at $\by^*$.
If $\bD^*_{n+1}\bm \Phi_{n+1}$ is nonsingular, we can solve for $\bD\bg(\by^*)$ which results in
\begin{equation}\label{eq:Formula_Dg(y*)}
		\bD\bg(\by^*)=-(\bD^*_{n+1}\bm \Phi_{n+1})^{-1}\left(	\bD^*_n\bm \Phi_{n+1}+ \bD^*_1\bm \Phi_{n+1}\bD^*\by^{(1)}\right).
\end{equation}
In order to compute $\bD^*\by^{(1)}$, we use the same trick, i.\,e.\
\begin{equation*}
	\bzero=\bD^*\bm\Phi_1=\bD_n^*\bm\Phi_1+\bD_1^*\bm\Phi_1\bD^*\by^{(1)},
\end{equation*}
which yields
\begin{equation}\label{eq:Formula_D*y^1}
	\bD^*\by^{(1)}=-\left(\bD_1^*\bm\Phi_1\right)^{-1}\bD_n^*\bm\Phi_1,
\end{equation}
if $\bD_1^*\bm \Phi_1$ is invertible. Hence, we have to compute several Jacobians in order to calculate $\bD\bg(\by^*)$ and we start with
\begin{equation*}
	\begin{aligned}
		\bD^*_n\bm \Phi_1=\bI \qta \bD^*_1\bm \Phi_1=\beta \Delta t\bA-\bI.
	\end{aligned}
\end{equation*}
Note that $\beta>0$ and $\sigma(\bA)\tm \C^-$, which implies that $\bD_1^*\bm \Phi_1$ is nonsingular. Thus, we can use \eqref{eq:Formula_D*y^1} and find
\begin{align}\label{eq:D*y^1}
\bD^*\by^{(1)}=-(\beta \Delta t\bA-\bI)^{-1}\cdot \bI=(\bI-\beta \Delta t\bA)^{-1}.
\end{align}
Next, we compute $\bD^*_n\bm\Phi_{n+1}$ and $\bD^*_1\bm\Phi_{n+1}$. For this, we first define $\bbf(\by^n,\by^{(1)})=\diag(\by^n)^k(\beta_{20}\by^n+\beta_{21}\by^{(1)})$ for some $k\in \R$ and get
\begin{equation}\label{eq:ProductRuleDiag}
	\begin{aligned}
\left(\bD_n^*\bbf\right)_{ij}&= \partial_{y_j^n}\left((y_i^n)^k(\beta_{20}y_i^n+\beta_{21}y_i^{(1)}))\right)\Big|_{\by^n=\by^*}\\&=\delta_{ij}\left(k(y_i^*)^{k-1}(\beta_{20}+\beta_{21})y_i^*+(y_i^*)^k\beta_{20}\right)\\&=\left(\diag(\by^*)^{k}\right)_{ij}(k(\beta_{20}+\beta_{21})+\beta_{20}),
	\end{aligned}
\end{equation}
where we have used the fact that $\by^{(1)}(\by^*)=\by^*$. Similarly, defining $\bu(\by^n,\by^{(1)})=\diag(\by^{(1)})^k(\beta_{20}\by^n+\beta_{21}\by^{(1)})$, we obtain
\begin{equation}
\bD_1^*\bu=\diag(\by^*)^{k}(k(\beta_{20}+\beta_{21})+\beta_{21}).\label{eq:D_1u}
\end{equation}
In order to apply the formulas \eqref{eq:ProductRuleDiag} and \eqref{eq:D_1u} to compute  $\bD^*_n\bm\Phi_{n+1}$ and $\bD^*_1\bm\Phi_{n+1}$, we also make use of the fact that diagonal matrices commute, so that we end up with
\begin{align}
	\bD^*_n\bm \Phi_{n+1}=(1-\alpha)\bI+\Delta t\bA((s-1)(1-\alpha\beta)+\beta_{20})\qta \bD^*_1\bm \Phi_{n+1}=\alpha \bI+\Delta t\bA(-s(1-\alpha \beta)+\beta_{21}),
\end{align}
where we have exploited $\beta_{20}+\beta_{21}=1-\alpha\beta$.
Finally, to compute $\bD_{n+1}^*\bm\Phi_{n+1}$ we rewrite \eqref{eq:SSPMPRK2Matrixvector} utilizing $\diag(\bv)\bw=\diag(\bw)\bv$ to get
\begin{equation}
\begin{aligned}\label{eq:trick}
	\bm \Phi_{n+1}(\by^n,\by^{(1)}(\by^n),\by^{n+1}(\by^n))=&(1-\alpha)\by^n+\alpha\by^{(1)}\\&+\Delta t\bA\diag(\beta_{20}\by^n+\beta_{21}\by^{(1)})(\diag(\by^{(1)}))^{-s}(\diag(\by^{n}))^{s-1}\by^{n+1}-\by^{n+1}.
\end{aligned}
\end{equation}
From this, it is easy to see that
\begin{equation*}
	\begin{aligned}
	\bD^*_{n+1}\bm \Phi_{n+1}=(1-\alpha\beta) \Delta t\bA-\bI
	\end{aligned}
\end{equation*}
which is a nonsingular matrix since $\sigma(\bA)\tm \C^-$, and \eqref{eq:alphabeta_conditions} implies $1-\alpha\beta\geq \frac{1}{2\beta}>0$.
Finally, we introduce the expressions for the Jacobians into the formula \eqref{eq:Formula_Dg(y*)} resulting in
\begin{equation}\label{eq:Dg(y*)SSPMRPK2}
	\begin{aligned}
		\bD\bg(\by^*)=(\bI-(1-\alpha\beta)\Delta t\bA)^{-1}\Bigl(&(1-\alpha)\bI+\Delta t\bA((s-1)(1-\alpha\beta)+\beta_{20})\\&+\left(\alpha \bI+\Delta t\bA(-s(1-\alpha \beta)+\beta_{21})\right)(\bI-\beta \Delta t\bA)^{-1}\Bigr).
	\end{aligned}
\end{equation}
Since $\bD\bg(\by^*)$ is a rational function of $\bA$ and the identity matrix $\bI$, any eigenvector of $\bA$ with the eigenvalue $\lambda$ is consequently an eigenvector of $\bD\bg(\by^*)$ with the eigenvalue $R(\Delta t \lambda)$ where
\begin{equation*}
	\begin{aligned}
		R(z)=\frac{1 - \alpha + z((s - 1)(1-\alpha\beta ) + \beta_{20}) + \frac{\alpha + z(-s(1-\alpha\beta) + \beta_{21})}{1-\beta z}}{1 - (1-\alpha\beta)z}.
	\end{aligned}
\end{equation*}
From 
\begin{equation*}
	\beta_{20} = 1 - \frac{1}{2\beta} - \alpha\beta,\quad  \beta_{21} = \frac{1}{2\beta}\qta s = \frac{\alpha\beta^2 - \alpha\beta + 1}{\beta(1-\alpha\beta)}
\end{equation*}
elementary computations lead to
\begin{equation*}
	R(z)=\frac{-2 + (2\alpha\beta^2 - 2\alpha\beta + 1)z^2 - 2\beta(\alpha - 1)z}{2(1 + (\alpha\beta - 1)z)(\beta z - 1)}.
\end{equation*}
In summary, we obtain the following proposition.
\begin{prop}\label{Prop:SSPMPRK2_Dg(y*)}
	Let $\bg\from\R^N_{>0}\to\R^N_{>0}$ be the map given by the application of the second order SSPMPRK to the differential equation \eqref{eq:Dahlquist_System} with $\bm 1\in \ker(\bA^T)$.
	Then any $\by^*\in \ker(\bA)\cap \R^N_{>0}$ is a fixed point of $\bg$ and $\bg\in \mathcal{C}^2(\R^N_{>0},\R^N_{>0})$, whereby the first derivatives of $\bg$ are Lipschitz continuous in an appropriate neighborhood of $\by^*$. Moreover, all linear invariants are conserved and an eigenvalue $\lambda$ of $\bA$ corresponds to the eigenvalue $R(\Delta t\lambda)$ of the Jacobian of $\bg$ where
	\begin{equation}\label{eq:StabfunSSPMPRK2}
		R(z)=\frac{-2 + (2\alpha\beta^2 - 2\alpha\beta + 1)z^2 - 2\beta(\alpha - 1)z}{2(1 + (\alpha\beta - 1)z)(\beta z - 1)}.
	\end{equation}
\end{prop}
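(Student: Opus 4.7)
The plan is to verify, in order, the four assertions: smoothness of $\bg$, fixed-point property, conservation of linear invariants, and the eigenvalue formula. For smoothness, I would start from the matrix-vector form \eqref{eq:SSPMPRK2Matrixvector} and observe that $\bm\Phi_1$ and $\bm\Phi_{n+1}$ are $\mathcal C^2$ on $\R^N_{>0}\times \R^N_{>0}(\times\R^N_{>0})$, since they are compositions of polynomial expressions and positive real powers of the components. Because $\by^{(1)}(\by^n)$ and $\bg(\by^n)=\by^{n+1}(\by^n)$ arise as unique solutions of linear systems whose matrices depend smoothly on $\by^n$ and are invertible for $\by^n>\bzero$ (the latter follows from $\sigma(\bA)\tm\C^-$ combined with $\beta>0$ and $1-\alpha\beta\geq \frac{1}{2\beta}>0$), the implicit function theorem yields $\bg\in \mathcal C^2$. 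Lipschitz continuity of the first derivatives near $\by^*$ then follows from Remark \ref{rem:C2->C1}.

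For the fixed-point property and conservation, I would plug $\by^n=\by^{(1)}=\by^{n+1}=\by^*$ into \eqref{eq:SSPMPRK2Matrixvector} and use $\bA\by^*=\bzero$ to observe both defining equations vanish; uniqueness of solutions then forces $\bg(\by^*)=\by^*$. Conservation of any $\bn\in\ker(\bA^T)$ is a short calculation: multiplying both equations of \eqref{eq:SSPMPRK2Matrixvector} by $\bn^T$ kills the $\bA$-terms, and one substitutes $\bn^T\by^{(1)}=\bn^T\by^n$ obtained from the first equation into the second.

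For the Jacobian at $\by^*$, the idea is to avoid solving for $\bg$ explicitly and instead apply implicit differentiation to $\bm\Phi_1$ and $\bm\Phi_{n+1}$ to obtain \eqref{eq:Formula_D*y^1} and \eqref{eq:Formula_Dg(y*)}. The partial Jacobians $\bD^*_n\bm\Phi_1$ and $\bD^*_1\bm\Phi_1$ are immediate. For the partial derivatives of $\bm\Phi_{n+1}$, the key identities are \eqref{eq:ProductRuleDiag} and \eqref{eq:D_1u}, which exploit $\by^{(1)}(\by^*)=\by^*$ so that the powers $(y_i^*)^s$ combine with $(y_i^*)^{1-s}$ to yield simple diagonal factors, and $\beta_{20}+\beta_{21}=1-\alpha\beta$. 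For $\bD_{n+1}^*\bm\Phi_{n+1}$ the trick $\diag(\bv)\bw=\diag(\bw)\bv$ used in \eqref{eq:trick} turns the expression into one linear in $\by^{n+1}$, from which the Jacobian reads off immediately. Invertibility of $\bD^*_{n+1}\bm\Phi_{n+1}=(1-\alpha\beta)\Delta t\bA-\bI$ follows from $\sigma(\bA)\tm \C^-$ and \eqref{eq:alphabeta_conditions}.

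Finally, since the assembled expression for $\bD\bg(\by^*)$ is a rational function of $\bA$ involving only $\bA$ and $\bI$, any eigenvector $\bv$ of $\bA$ with eigenvalue $\lambda$ satisfies $\bD\bg(\by^*)\bv=R(\Delta t\lambda)\bv$, with $R$ read off directly from the formula. The main obstacle I expect is the bookkeeping in the final simplification of $R(z)$: one must substitute the explicit values of $\beta_{20}$, $\beta_{21}$, $s$, combine the two fractions over a common denominator, and check that the numerator simplifies to $-2+(2\alpha\beta^2-2\alpha\beta+1)z^2-2\beta(\alpha-1)z$ and the denominator to $2(1+(\alpha\beta-1)z)(\beta z-1)$. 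This is a mechanical but error-prone step, and I would verify it by polynomial identification in $z$ rather than by generic manipulation.
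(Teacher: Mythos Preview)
Your proposal is correct and follows essentially the same approach as the paper: the smoothness argument via the $\mathcal C^2$ maps $\bm\Phi_1,\bm\Phi_{n+1}$ and Remark~\ref{rem:C2->C1}, the fixed-point and linear-invariant verifications by direct substitution, and the computation of $\bD\bg(\by^*)$ via implicit differentiation of $\bm\Phi_1$ and $\bm\Phi_{n+1}$ using exactly the identities \eqref{eq:ProductRuleDiag}, \eqref{eq:D_1u}, and \eqref{eq:trick} all mirror the paper's derivation. The only minor cosmetic difference is that the paper justifies $\bg\in\mathcal C^2$ by noting that $\by^{(1)}$ and $\by^{n+1}$ are obtained by solving linear systems with smoothly varying invertible coefficient matrices, whereas you invoke the implicit function theorem; both arguments are valid and equivalent here.
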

By this proposition, the SSPMPRK2($\alpha,\beta$) scheme satisfies all preconditions in order to apply Theorem \ref{Thm_MPRK_stabil}. Thus, we have to analyze the stability function $R$.
\begin{prop}\label{Prop:Stab_SSPMPRK2}
Let $R$ be defined by \eqref{eq:StabfunSSPMPRK2} with $\alpha,\beta$ satisfying \eqref{eq:alphabeta_conditions}.
	\begin{enumerate}
\item\label{item:a_propSSP2}  For any $\alpha>\frac{1}{2\beta}$, the set $\{z\in \C^-\mid \lvert R(z)\rvert \leq 1\}$ is bounded.
\item \label{item:b_propSSP2}	For all $\alpha<\frac{1}{2\beta}$ with $(\alpha,\beta)\neq(0,\frac12)$ we have $\lvert R(z)\rvert <1$ for all $z\in \C^-\setminus\{0\}$. 
\item\label{item:c_propSSP2} For $\alpha=\frac{1}{2\beta}$ or $(\alpha,\beta)=(0,\frac12)$ the relation $\lvert R(z)\rvert <1$ is true for all $z$ with $\re(z)<0$, and $\abs{R(z)}=1$ holds whenever $\re(z)=0$.
	\end{enumerate}

\end{prop}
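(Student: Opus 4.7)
The plan is to analyze $R$ on the imaginary axis and at infinity and then propagate the conclusion to the interior of $\C^-$ via the maximum modulus principle. Writing $R=N/D$ with $N(z)=-2-2\beta(\alpha-1)z+(2\alpha\beta^2-2\alpha\beta+1)z^2$ and $D(z)=2(1+(\alpha\beta-1)z)(\beta z-1)$, I would first expand $|D(iy)|^2-|N(iy)|^2$ as a polynomial in $y$. Because $R(0)=1$, the constant term vanishes, and direct but careful bookkeeping (using in particular $\beta_{20}+\beta_{21}=1-\alpha\beta$) shows that the coefficient of $y^2$ cancels as well, leaving only a $y^4$ contribution whose coefficient factors as
\[
(2\beta-1)(1-2\alpha\beta)\bigl(1+2\beta(1-\alpha)\bigr).
\]
Under \eqref{eq:alphabeta_conditions} the third factor is strictly positive; moreover, $\alpha\beta+\tfrac{1}{2\beta}\le 1$ together with $\alpha\in[0,1]$ forces $\beta\ge\tfrac12$, with equality only at $(\alpha,\beta)=(0,\tfrac12)$. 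Thus, apart from that single exceptional pair, one always has $2\beta-1>0$, and the sign of $|R(iy)|^2-1$ is controlled by that of $2\alpha\beta-1$.

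For \ref{item:c_propSSP2}, I would observe that in both of the listed subcases there is a cancellation in $R=N/D$: when $\alpha=\tfrac{1}{2\beta}$ the factor $(\beta z-1)$ divides $N$, and when $(\alpha,\beta)=(0,\tfrac12)$ the factor $(z-1)$ divides both $N$ and $D$. After cancellation, $R$ simplifies in either case to the Möbius transformation $R(z)=(z+2)/(2-z)$, i.e.\ the stability function of the implicit midpoint rule. The elementary identity $|R(x+iy)|^2=((x+2)^2+y^2)/((x-2)^2+y^2)$ then yields directly that $|R|=1$ on the imaginary axis and $|R|<1$ throughout the open left half-plane.

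For \ref{item:a_propSSP2}, the condition $\alpha>\tfrac{1}{2\beta}$ combined with \eqref{eq:alphabeta_conditions} forces $\beta>1$ (hence $2\beta-1>0$) and $1-2\alpha\beta<0$, so $|R(iy)|>1$ for all $y\neq 0$. The same sign analysis applied to the ratio of leading coefficients of $N$ and $D$ gives $|R(\infty)|>1$; consequently $|R(z)|>1$ for $|z|$ sufficiently large, and the sublevel set $\{z\in\C^-:|R(z)|\le 1\}$ is bounded. For \ref{item:b_propSSP2} we are instead in the regime $\beta>\tfrac12$ and $1-2\alpha\beta>0$, which gives $|R(iy)|<1$ for $y\neq 0$ and $|R(\infty)|<1$. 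Since the two poles $z=1/\beta$ and $z=1/(1-\alpha\beta)$ of $R$ both lie in the open right half-plane, $R$ extends holomorphically to a neighborhood of $\bar{\C^-}\cup\{\infty\}$, which on the Riemann sphere is a closed disk with boundary $i\R\cup\{\infty\}$. The maximum modulus principle therefore yields $|R|\le 1$ throughout this disk, and since $R$ is non-constant the maximum can only be attained on the boundary — and in fact only at the single point $z=0$, where $|R|=1$ — so $|R(z)|<1$ on all of $\C^-\setminus\{0\}$. The main technical obstacle is the algebraic identity for $|D(iy)|^2-|N(iy)|^2$: once the cancellation of the $y^2$ coefficient and the explicit factorization of the $y^4$ coefficient have been established, the three cases follow from a clean sign analysis combined with a standard application of the maximum modulus principle.
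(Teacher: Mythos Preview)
Your argument is correct and follows the same overall strategy as the paper: compute $\lvert R\rvert$ on the imaginary axis, factor the resulting $y^4$ coefficient, do a sign analysis under \eqref{eq:alphabeta_conditions}, and then push the inequality into the open left half-plane via a maximum-principle argument. The factorization you obtain, $(2\beta-1)(1-2\alpha\beta)(1+2\beta(1-\alpha))$, agrees (up to sign convention) with the paper's expression $-(2\alpha\beta-2\beta-1)(2\alpha\beta-1)(2\beta-1)$.

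There are two places where your execution is a bit cleaner than the paper's. For part~\ref{item:c_propSSP2} you cancel the common linear factor and reduce $R$ to the explicit M\"obius map $(z+2)/(2-z)$, which settles both the boundary equality and the interior strict inequality by an elementary identity; the paper instead notes that the $y^4$ coefficient vanishes and then reapplies Phragm\'en--Lindel\"of. For part~\ref{item:a_propSSP2} you read off $\lvert R(\infty)\rvert>1$ directly from the sign of the same $y^4$ coefficient (which equals $\lvert N_{\mathrm{lead}}\rvert^2-\lvert D_{\mathrm{lead}}\rvert^2$), whereas the paper computes $\lim_{z\to-\infty}R(z)$ separately and differentiates in $\alpha$ to locate the threshold. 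Your use of the maximum modulus principle on the closed disk $\overline{\C^-}\cup\{\infty\}$ on the Riemann sphere is equivalent to the paper's invocation of Phragm\'en--Lindel\"of, and is arguably the more transparent formulation here since $R$ is rational of degree zero and hence genuinely holomorphic at $\infty$.
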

\begin{proof}
For proving part \ref{item:a_propSSP2} of the proposition, we consider \eqref{eq:StabfunSSPMPRK2} which yields
\[\lim_{z\to-\infty} R(z)=\frac{2\alpha\beta^2 - 2\alpha\beta + 1}{2\beta(\alpha\beta-1)}=\frac{2\alpha\beta^2 - 2\alpha\beta + 1}{2\alpha\beta^2-2\beta}.\]
Note that for $\alpha=\frac{1}{2\beta}$, we obtain $\lim_{z\to-\infty} R(z)=\frac{\beta-1+1}{\beta-2\beta}=-1$. Finally, it is straightforward to verify
	\begin{equation*}
		\begin{aligned}
			\partial_\alpha\left(\lim_{z\to-\infty} R(z)\right)=	\partial_\alpha\left(\frac{2\alpha\beta^2 - 2\alpha\beta + 1}{2\beta(\alpha\beta-1)}\right)&=\frac{2\beta(\beta-1)2\beta(\alpha\beta-1)-(2\alpha\beta(\beta-1) + 1)2\beta^2}{4\beta^2(\alpha\beta-1)^2}\\&=\frac{1-2\beta}{(\alpha\beta-1)^2}<0,
		\end{aligned}
	\end{equation*}
	since $\beta\geq \frac12$. Therefore $\lim_{z\to-\infty} R(z)$ decreases with increasing $\alpha$. As a result, for any $\alpha>\frac{1}{2\beta}$, we find $\lim_{z\to-\infty} R(z)<-1$ and thus, there exists $z^*\in \C^-$ so that $\lvert R(z^*)\rvert>1$. Indeed, the set $\{z\in \C^-\mid \lvert R(z)\rvert \leq 1\}$ is bounded, as we find $\lvert R(z^*)\rvert>1$ for any $z^*\in \C^-$ with $\lvert z^*\rvert$ large enough.

We now focus on the derivation of the remaining statements, we
investigate $\lvert R(z)\rvert$ first on the imaginary axis.
	A technical but elementary computation for $z=\ii b$ and $b\in \R$ yields
	\[\lvert R(z)\rvert=\frac{1+b^4(\alpha\beta^2-\alpha\beta+\frac12)^2+b^2(1+(\alpha^2+1)\beta^2-2\alpha\beta)}{(1+(\alpha\beta-1)^2b^2)(\beta^2b^2+1)}.\]
Subtracting the denominator from the numerator leads to the expression
\begin{equation}\label{eq:expProveSSP2}
-(2\alpha\beta-2\beta-1)(2\alpha\beta-1)(2\beta-1)b^4.
\end{equation}
With respect to statement \ref{item:b_propSSP2}, we consider $\alpha<\frac{1}{2\beta}$ and $\beta>\frac12$, as $\beta=\frac12$ implies $\alpha=0$ due to equation \eqref{eq:alphabeta_conditions}. It follows that $2\beta-1>0$. Due to $\alpha<\frac{1}{2\beta}$, we see $2\alpha\beta <1$ and $2\alpha\beta-2\beta-1<1-2\beta-1<0$, so that the whole product \eqref{eq:expProveSSP2} becomes negative, whenever $z=\ii b\neq 0$. This is equivalent to $\lvert R(z)\rvert<1$ on the imaginary axis without the origin. Using the 
Phragm\'{e}n-Lindel\"of principle on the union of the origin and the interior of $\C^-$, we conclude that $\lvert R(z)\rvert\leq 1$ for all $z\in \C^-$, as $R$ is rational and no poles are located in $\C^-$. Furthermore, since $R$ is not constant, we conclude from the maximum modulus principle that there exists no $z_0$ in the interior of $\C^-$ with $\abs{R(z_0)}=1$, so that, $\abs{R(z)}<1$ holds for all $z\in \C^-\setminus\{0\}$.

The assertion \ref{item:c_propSSP2} can be proved in a similar way using \eqref{eq:expProveSSP2}. Indeed, in the case of $\alpha=\frac{1}{2\beta}$ or $(\alpha,\beta)=(0,\frac12)$, the product \eqref{eq:expProveSSP2} vanishes proving $\abs{R(z)}=1$ on the imaginary axis.
Once again taking advantage of the Phragm\'{e}n-Lindel\"of principle one can conclude $\abs{R(z)}<1$ in the interior of $\C^-$.
\end{proof}
As a result we obtain the following corollaries that are a direct consequence of the application of Theorem~\ref{Thm:_Asym_und_Instabil} and Theorem \ref{Thm_MPRK_stabil}.
\begin{cor}\label{Cor:SSPMPRK2stab}
	Let $\by^*$ be a positive steady state of the differential equation \eqref{eq:Dahlquist_System}. Then $\by^*$ is a fixed point of the SSPMPRK2($\alpha,\beta$) scheme and the following holds:
	\begin{enumerate}
		\item
For any $\alpha>\frac{1}{2\beta}$, the stability region of the SSPMPRK2($\alpha,\beta$) method is bounded.
\item  For all $\alpha<\frac{1}{2\beta}$ with $(\alpha,\beta)\neq(0,\frac12)$, the fixed point $\by^*$ is stable for all $\Delta t>0$.
\item For $\alpha=\frac{1}{2\beta}$ or $(\alpha,\beta)=(0,\frac12)$, the fixed point $\by^*$ is stable for all $\Delta t>0$, 
if all nonzero eigenvalues of $\bA$ have a negative real part. 
	\end{enumerate}
\end{cor}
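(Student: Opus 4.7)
The plan is to verify, once, that the SSPMPRK2 iteration map $\bg$ satisfies all assumptions of Theorem~\ref{Thm_MPRK_stabil}, and then to read off each of the three parts of the corollary from the corresponding part of Proposition~\ref{Prop:Stab_SSPMPRK2}. Proposition~\ref{Prop:SSPMPRK2_Dg(y*)} already supplies most of what is needed: $\by^*$ is a fixed point of $\bg$, the map lies in $\mathcal{C}^2(\R^N_{>0},\R^N_{>0})$ with Lipschitz-continuous first derivatives on a neighborhood of $\by^*$, and all linear invariants are conserved. The only remaining observation is that every $\by\in\ker(\bA)\cap\R^N_{>0}$ is a fixed point of $\bg$, which follows from the uniqueness of the solution to \eqref{eq:SSPMPRK2Matrixvector} since $\bA\by=\bzero$ makes $(\by,\by,\by)$ a valid solution.

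Next, I use Proposition~\ref{Prop:SSPMPRK2_Dg(y*)} to write the spectrum of $\bD\bg(\by^*)$ as $\{R(\Delta t\,\lambda):\lambda\in\sigma(\bA)\}$. Because the eigenvalues of $\bA$ on the imaginary axis carry only trivial Jordan blocks (cf.\ Section~\ref{sec:intro}), the eigenvalue $\lambda=0$ has algebraic multiplicity equal to $k=\dim\ker(\bA)$, and the resulting $k$ copies of $R(0)=1$ are exactly the eigenvalues $\bD\bg(\by^*)\bv_i=\bv_i$ guaranteed by Theorem~\ref{Thm_MPRK_stabil}. Hence the assumption of Theorem~\ref{Thm_MPRK_stabil}~\ref{It:Thm_Stab_a} reduces to the scalar bound $|R(\Delta t\,\lambda)|<1$ for every nonzero $\lambda\in\sigma(\bA)$; conversely, a single such $\lambda$ with $|R(\Delta t\,\lambda)|>1$ forces $\rho(\bD\bg(\by^*))>1$ and, via Theorem~\ref{Thm:_Asym_und_Instabil}, instability of $\by^*$.

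The three parts then follow immediately. Part~(a) is just Proposition~\ref{Prop:Stab_SSPMPRK2}~\ref{item:a_propSSP2} restated, since the stability region of the method is by definition $\{z\in\C^-:|R(z)|\leq 1\}$. For part~(b), the nonzero eigenvalues of $\bA$ lie in $\C^-\setminus\{0\}$, so Proposition~\ref{Prop:Stab_SSPMPRK2}~\ref{item:b_propSSP2} yields $|R(\Delta t\,\lambda)|<1$ for every $\Delta t>0$, and stability follows from Theorem~\ref{Thm_MPRK_stabil}~\ref{It:Thm_Stab_a}. Part~(c) is analogous: under the extra hypothesis that every nonzero eigenvalue of $\bA$ has strictly negative real part, Proposition~\ref{Prop:Stab_SSPMPRK2}~\ref{item:c_propSSP2} again delivers $|R(\Delta t\,\lambda)|<1$.

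The only step that warrants genuine care is the multiplicity bookkeeping in the second paragraph, namely checking that the $k$ copies of $R(0)=1$ in $\sigma(\bD\bg(\by^*))$ correspond exactly to the eigenvectors $\bv_i\in\ker(\bA)$ and are not accidentally inflated by a coincidence $R(\Delta t\,\lambda)=1$ for some nonzero $\lambda$; but Proposition~\ref{Prop:Stab_SSPMPRK2} rules this out in the relevant cases (strict inequality away from $0$ in (b), and away from the imaginary axis in (c) under the stated extra hypothesis). Once this is noted, the corollary is essentially a translation from the scalar stability function $R$ back to the fixed point $\by^*$.
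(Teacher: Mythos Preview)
Your proposal is correct and follows essentially the same approach as the paper, which simply states that the corollary is a direct consequence of Theorem~\ref{Thm:_Asym_und_Instabil} and Theorem~\ref{Thm_MPRK_stabil} combined with Propositions~\ref{Prop:SSPMPRK2_Dg(y*)} and~\ref{Prop:Stab_SSPMPRK2}. You have merely made the translation explicit, including the multiplicity bookkeeping for the eigenvalue $1$, which the paper leaves implicit.
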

\begin{cor}\label{Cor:SSPMPRK2stab1}
	Let the unique steady state $\by^*$ of the initial value problem \eqref{eq:Dahlquist_System}, \eqref{eq:IC} be positive. Then the iterates of the SSPMPRK2($\alpha,\beta$) scheme locally converge towards $\by^*$ for all $\Delta t>0$, if any of the following condition holds:
	\begin{enumerate}
		\item $\alpha <\frac{1}{2\beta}$ and $(\alpha,\beta)\neq (0,\frac12)$.
		\item  $\alpha=\frac{1}{2\beta}$ or $(\alpha,\beta)= (0,\frac12)$  and 
	additionally all nonzero eigenvalues of $\bA$ have a negative real part.
	\end{enumerate}
\end{cor}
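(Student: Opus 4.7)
The plan is to check that every hypothesis of Theorem \ref{Thm_MPRK_stabil}(b) is satisfied for the SSPMPRK2 map $\bg\colon\R^N_{>0}\to\R^N_{>0}$ and then read off local convergence directly from that theorem. The bulk of the work has already been done in Proposition \ref{Prop:SSPMPRK2_Dg(y*)} and Proposition \ref{Prop:Stab_SSPMPRK2}, so the argument reduces mostly to bookkeeping.

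First, I would collect the structural prerequisites of Theorem \ref{Thm_MPRK_stabil}. Proposition \ref{Prop:SSPMPRK2_Dg(y*)} provides exactly what is required: $\by^*\in\ker(\bA)\cap\R^N_{>0}$ is a fixed point of $\bg$, every element of $\ker(\bA)\cap\R^N_{>0}$ is also a fixed point, $\bg\in\mathcal C^2(\R^N_{>0},\R^N_{>0})$ so by Remark \ref{rem:C2->C1} its first derivatives are Lipschitz continuous on a bounded neighborhood $\mathcal D$ of $\by^*$, and $\bg$ conserves all linear invariants, hence maps $H\cap\R^N_{>0}$ into itself where $H$ is defined by \eqref{eq:H}. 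By the IVP uniqueness assumption, $\by^*$ is actually the only element of $\ker(\bA)\cap H$ reachable from $\by^0$ under the conservation constraint, so that convergence on $H$ automatically means convergence to $\by^*$.

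Next, I would verify the spectral condition that the $(N-k)$ eigenvalues of $\bD\bg(\by^*)$ which are not forced to equal $1$ lie strictly inside the unit disk. By Proposition \ref{Prop:SSPMPRK2_Dg(y*)}, these eigenvalues are the values $R(\Delta t\lambda)$ with $\lambda\in\sigma(\bA)\setminus\{0\}\subseteq\C^-\setminus\{0\}$, and a short check gives $R(0)=1$, so the eigenvalue $\lambda=0$ produces precisely the $k$ eigenvalues equal to $1$ coming from $\ker(\bA)$. In case (a), Proposition \ref{Prop:Stab_SSPMPRK2}(b) yields $|R(\Delta t\lambda)|<1$ on all of $\C^-\setminus\{0\}$, which covers every nonzero $\lambda$ without restriction. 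In case (b), the additional assumption that every nonzero eigenvalue of $\bA$ has strictly negative real part, together with $\Delta t>0$, places $\Delta t\lambda$ in the open left half-plane, where Proposition \ref{Prop:Stab_SSPMPRK2}(c) again gives $|R(\Delta t\lambda)|<1$. Applying Theorem \ref{Thm_MPRK_stabil}(b) finishes the proof.

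The only delicate point—and the place I would argue most carefully—is the case distinction for the ``remaining'' eigenvalues. In case (b) the hypothesis excluding purely imaginary nonzero eigenvalues of $\bA$ is essential, since Proposition \ref{Prop:Stab_SSPMPRK2}(c) allows $|R(\ii b)|=1$ for $b\neq 0$; without this exclusion the corresponding eigenvalue of $\bD\bg(\by^*)$ would lie on the unit circle and Theorem \ref{Thm_MPRK_stabil}(b) could not be invoked. In case (a) the stronger estimate $|R(z)|<1$ on the entire slit left half-plane removes this obstruction and no extra spectral assumption on $\bA$ is needed. Beyond this verification, no additional computation is required.
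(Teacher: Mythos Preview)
Your proposal is correct and follows exactly the approach the paper takes: the paper simply states that the corollary is a direct consequence of Theorem~\ref{Thm:_Asym_und_Instabil} and Theorem~\ref{Thm_MPRK_stabil}, combined with Propositions~\ref{Prop:SSPMPRK2_Dg(y*)} and~\ref{Prop:Stab_SSPMPRK2}, which is precisely the chain of reasoning you spell out in detail. Your careful handling of the case distinction---particularly the observation that in case (b) the extra hypothesis on nonzero eigenvalues is needed to avoid $|R(\ii b)|=1$---is exactly the point, and nothing further is required.
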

In order to illustrate the consequences of Corollary \ref{Cor:SSPMPRK2stab}, consider Figure \ref{Fig:alphabetagraph}, where due to \eqref{eq:alphabeta_conditions} all permitted pairs of $(\alpha,\beta)$ with $\beta\leq 5$ lie between the $\beta$-axis and the black curve. The blue graph is determined by $\alpha=\frac{1}{2\beta}$, and thus, separates pairs of parameters associated with unconditionally stable fixed points from those with bounded stability domains. The red rectangular with vertices $(0.2,3)$, $(0.2,3.5)$, $(0.24,3)$ and $(0.24,3.5)$ is located in that critical region, so that we further analyze the corresponding choices of parameters with the help of Figure \ref{Fig:StabregionSSPMPRK2}, where we plot the corresponding stability regions. One can observe that the chosen pairs of parameters from Figure \ref{Fig:alphabetagraph} that are closer to the blue graph are associated with a larger stability domain. The smallest stability region among the examples from Figure \ref{Fig:StabregionSSPMPRK2} are associated with the $(\alpha,\beta)$ pair at the top right corner of the red rectangular from Figure \ref{Fig:alphabetagraph}.
\begin{figure}[!h]
	\centering
\includegraphics[width=0.5\textwidth]{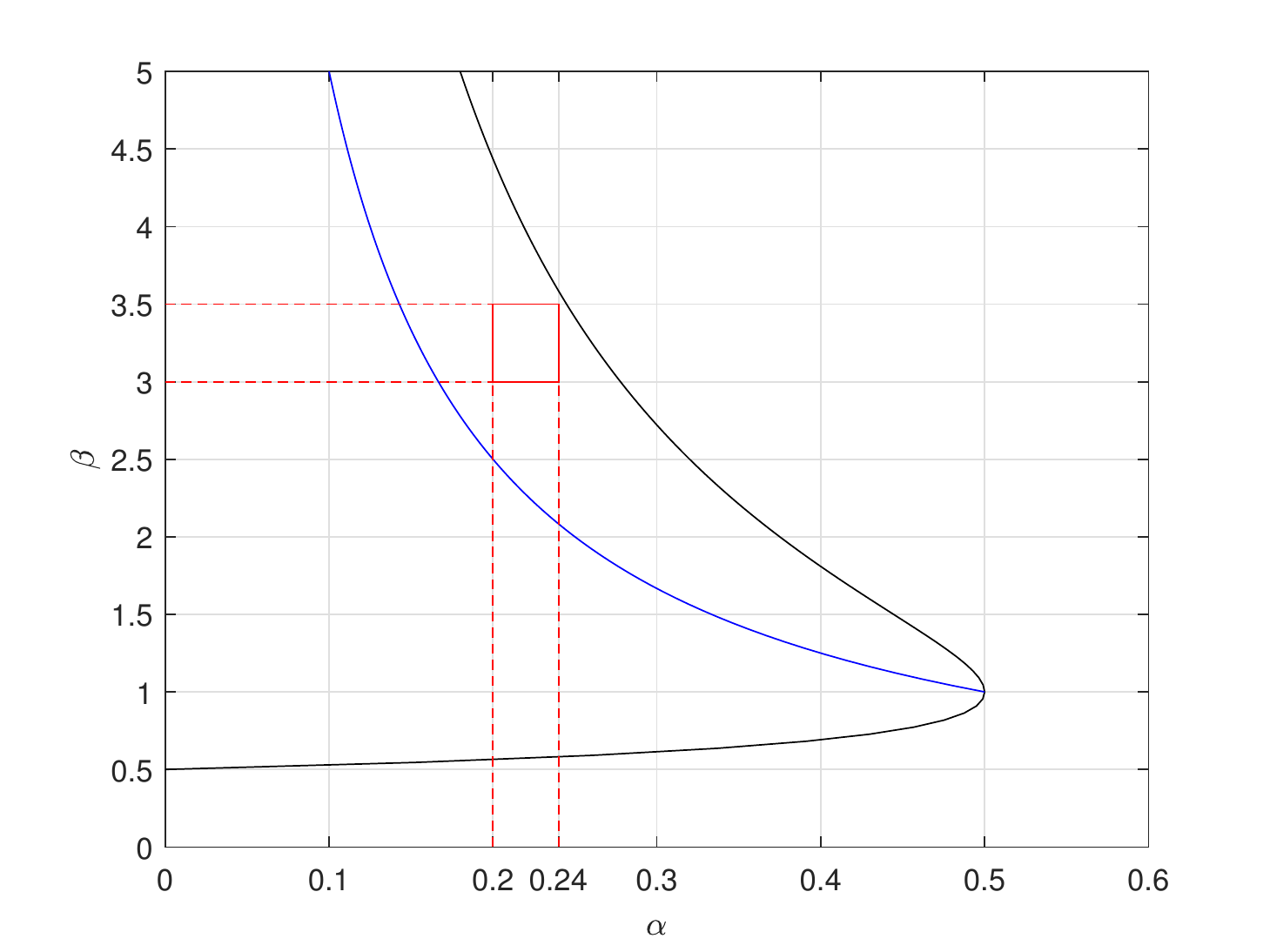}
\caption{The black curve is implicitly given by the function $\alpha(\beta)=\frac{1-\frac{1}{2\beta}}{\beta}$. The blue graph is determined by the equation $\alpha=\frac{1}{2\beta}$ and the red rectangular possesses the vertices $(\alpha,\beta)$ with $(0.2,3)$, $(0.2,3.5)$, $(0.24,3)$ and $(0.24,3.5)$ which lie between the black and blue curve.}\label{Fig:alphabetagraph}
\end{figure}
\begin{figure}[h!]
	\centering
\begin{subfigure}[c]{0.38\textwidth}
\includegraphics[width=\textwidth]{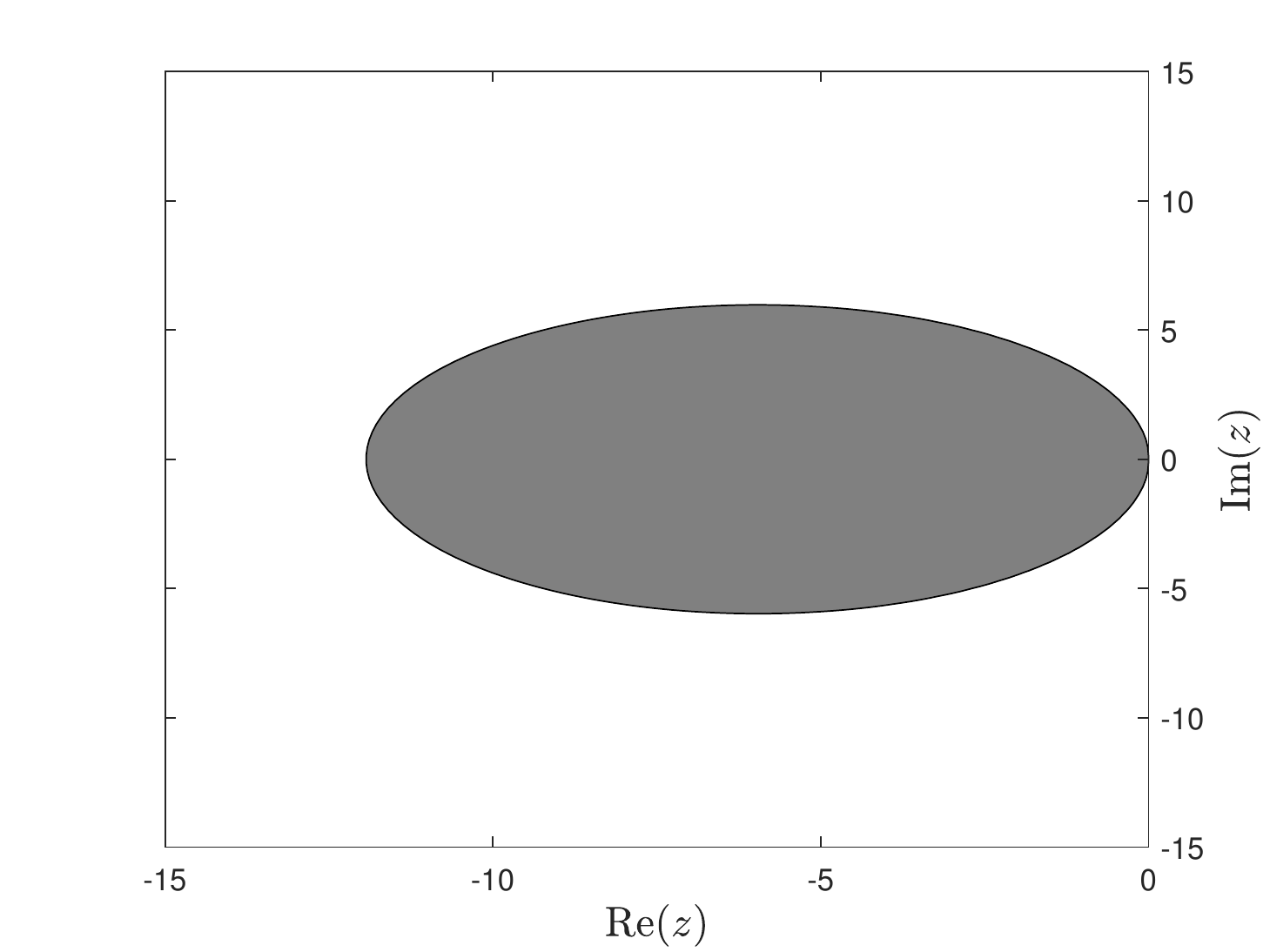}
\subcaption{$(\alpha,\beta)=(0.2,3)$}
\end{subfigure}
\begin{subfigure}[c]{0.38\textwidth}
\includegraphics[width=\textwidth]{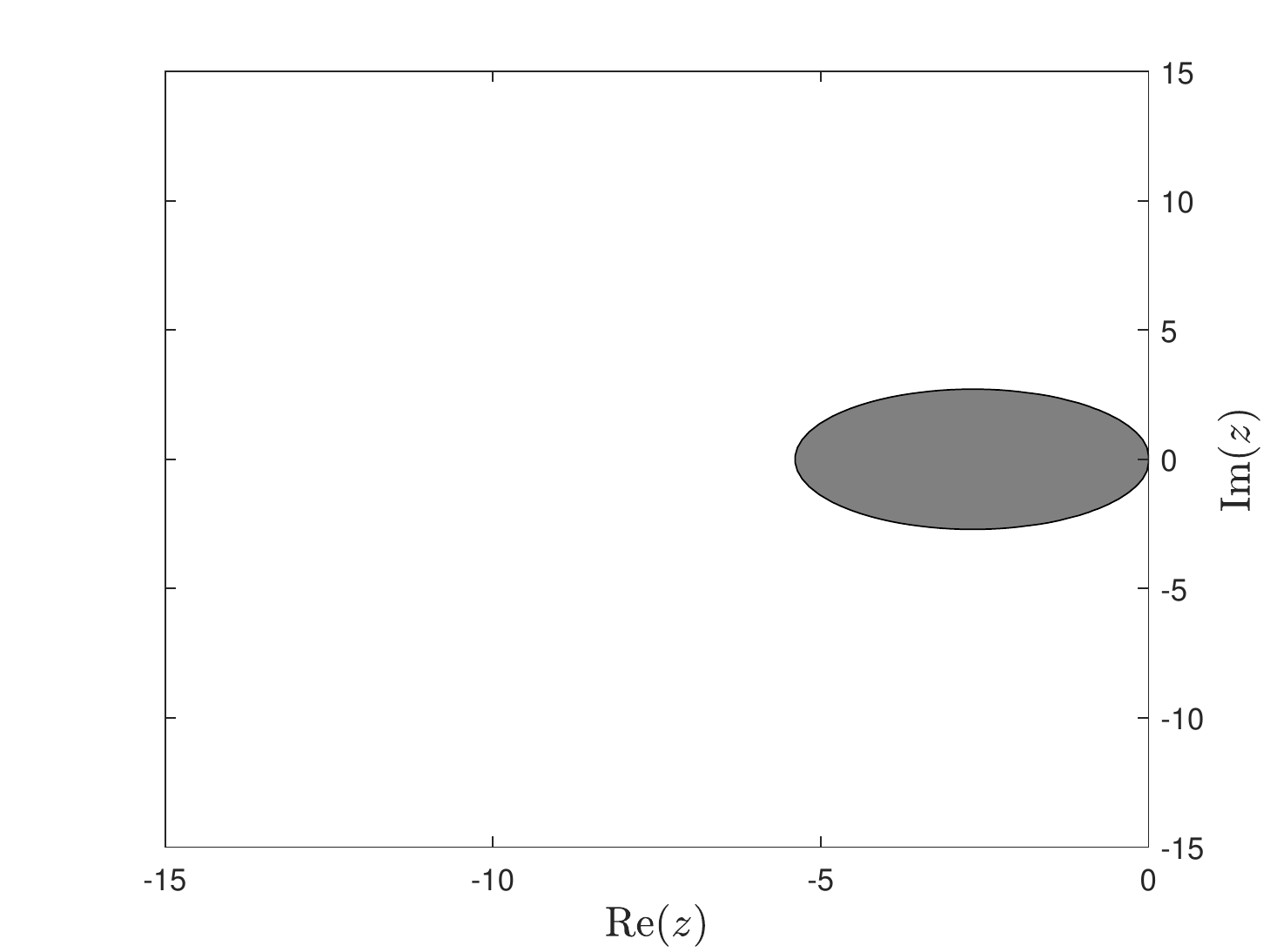}
\subcaption{$(\alpha,\beta)=(0.24,3)$}
\end{subfigure}\\
\begin{subfigure}[t]{0.38\textwidth}
\includegraphics[width=\textwidth]{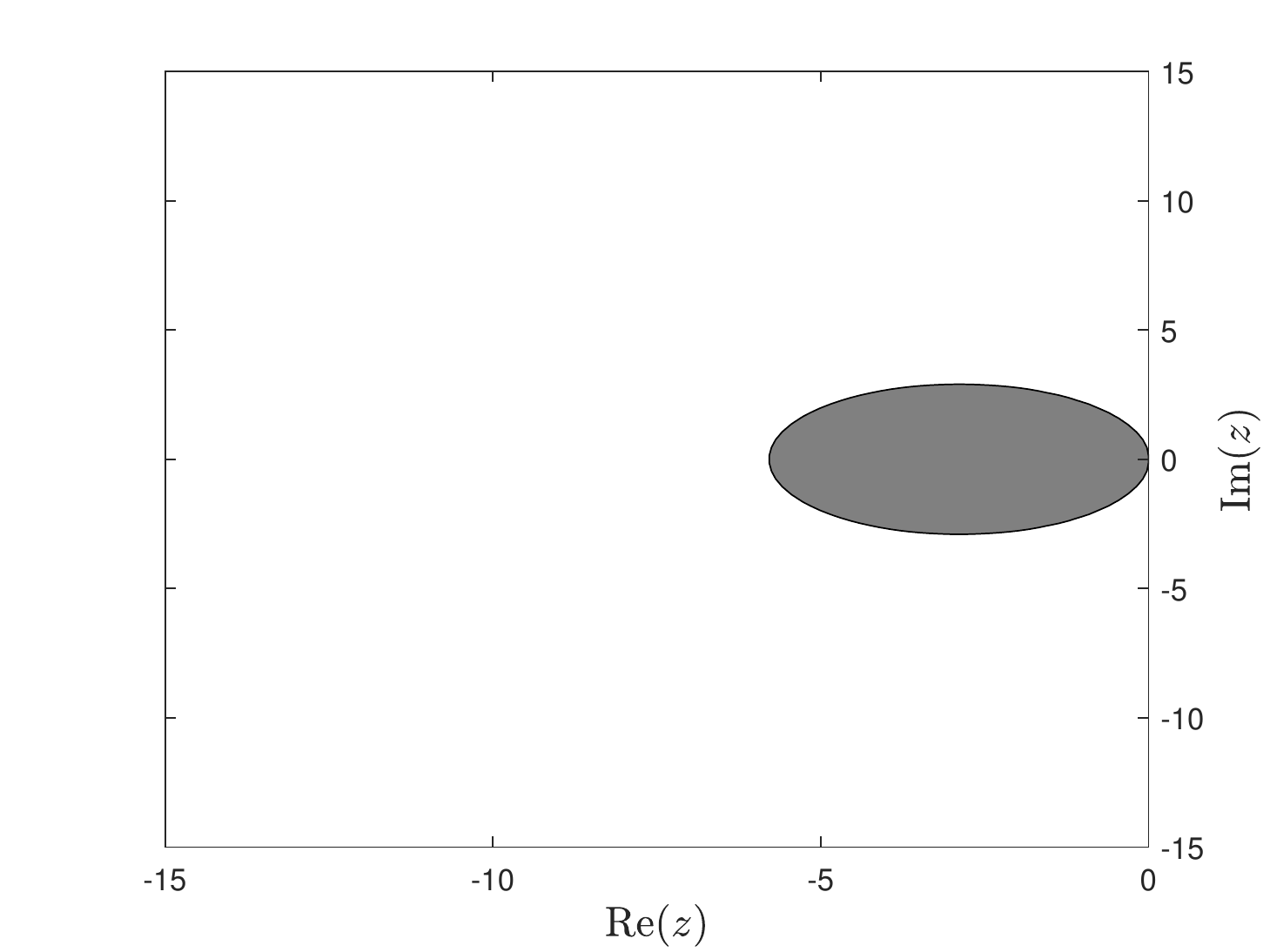}
\subcaption{$(\alpha,\beta)=(0.2,3.5)$}
\end{subfigure}
\begin{subfigure}[t]{0.38\textwidth}
\includegraphics[width=\textwidth]{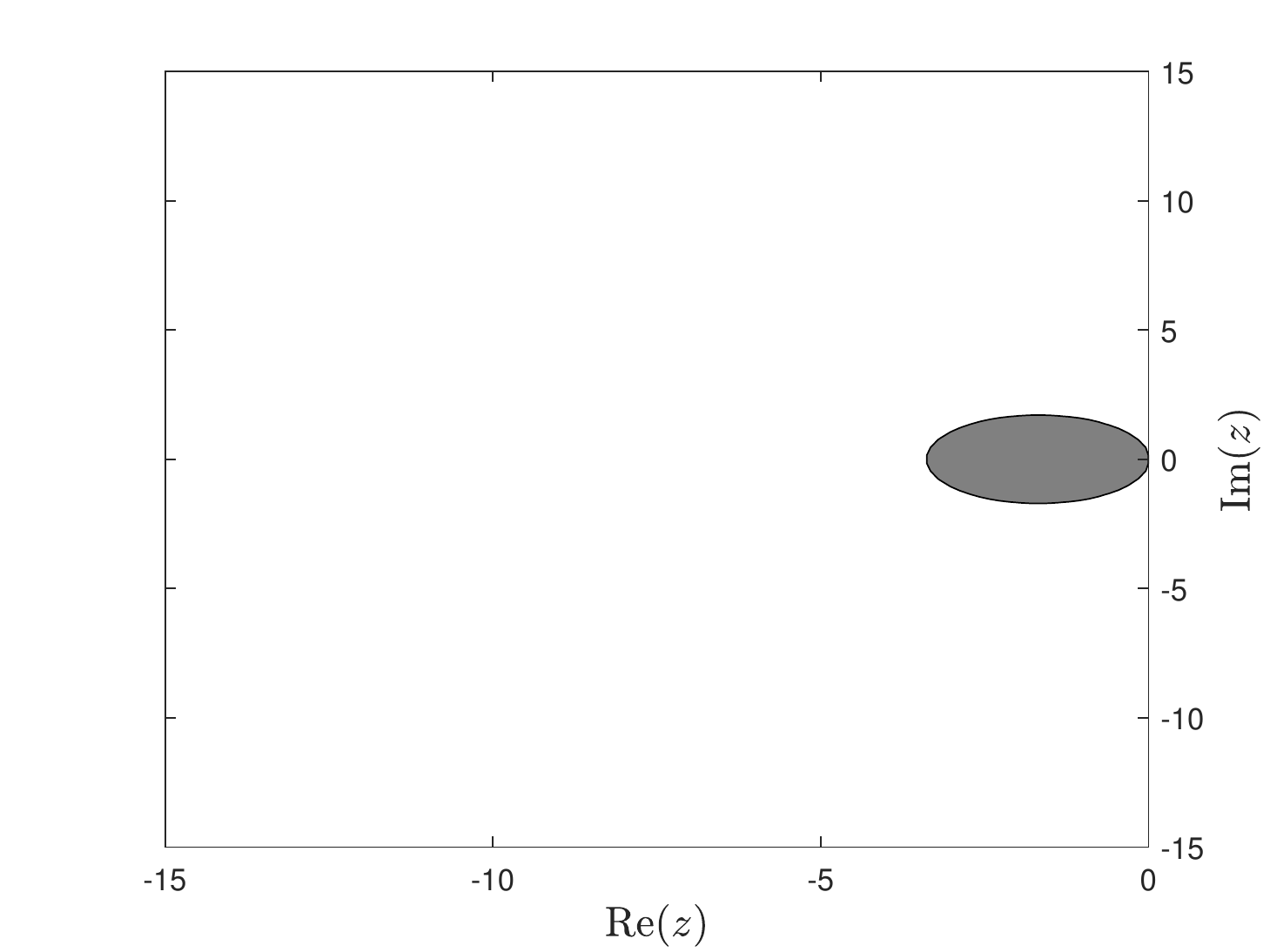}
\subcaption{$(\alpha,\beta)=(0.24,3.5)$}
\end{subfigure}
\caption{Different stability domains of the SSPMPRK2($\alpha,\beta$) method are plotted for $(\alpha,\beta)$ associated with the corners of the red rectangular from  Figure \ref{Fig:alphabetagraph}.}\label{Fig:StabregionSSPMPRK2}
\end{figure}

\subsection{Third order SSPMPRK Scheme}
In this section, we analyze the stability properties of the third order SSPMPRK scheme, introduced in \cite{MR3969000}, by using the same tools as for the second order scheme. We start by presenting the method, which can be written as
\begin{equation}\label{eq:SSPMPRK3}
\begin{aligned}
y_i^{(1)}=&\alpha_{10}y_i^n+\beta_{10} \Delta t\left(\sum_{j=1}^Np_{ij}(\by^n)\frac{y_j^{(1)}}{y_j^n}- \sum_{j=1}^Nd_{ij}(\by^n)\frac{y_i^{(1)}}{y_i^n}\right),\\
\rho_i=&n_1y_1^{(1)}+n_2y_i^n\left(\frac{y_i^{(1)}}{y_i^n}\right)^2,\\
y_i^{(2)}=&\alpha_{20}y_i^n+\alpha_{21} y_i^{(1)}+\Delta t\Biggl(\sum_{j=1}^N\left(\beta_{20}p_{ij}(\by^n)+\beta_{21}p_{ij}(\by^{(1)})\right)\frac{y_j^{(2)}}{\rho_j}- \sum_{j=1}^N\left(\beta_{20}d_{ij}(\by^n)+\beta_{21}d_{ij}(\by^{(1)})\right)\frac{y_i^{(2)}}{\rho_i}\Biggr),\\
a_i=&\eta_1y_i^n+\eta_2 y_i^{(1)}\\&+\Delta t\Biggl(\sum_{j=1}^N\left(\eta_3p_{ij}(\by^n)+\eta_{4}p_{ij}(\by^{(1)})\right)\frac{y_j^{(2)}}{(y_j^n)^{1-s}(y_j^{(1)})^s}-\sum_{j=1}^N\left(\eta_3d_{ij}(\by^n)+\eta_{4}d_{ij}(\by^{(1)})\right)\frac{y_i^{(2)}}{(y_i^n)^{1-s}(y_i^{(1)})^s}\Biggr),\\
\sigma_i=&a_i+\zeta y_i^n\frac{y_i^{(2)}}{\rho_i},\\
y_i^{n+1}=&\alpha_{30}y_i^n+\alpha_{31} y_i^{(1)}+\alpha_{32}y_i^{(2)}+\Delta t\Biggl(\sum_{j=1}^N\left(\beta_{30}p_{ij}(\by^n)+\beta_{31}p_{ij}(\by^{(1)})+\beta_{31}p_{ij}(\by^{(2)})\right)\frac{y_j^{n+1}}{\sigma_j}\\&- \sum_{j=1}^N\left(\beta_{30}d_{ij}(\by^n)+\beta_{31}d_{ij}(\by^{(1)})+\beta_{32}d_{ij}(\by^{(2)})\right)\frac{y_i^{n+1}}{\sigma_i}\Biggr),
\end{aligned}
\end{equation}
where we use the parameters
\begin{equation}\label{eq:SSPMPRK3Parameters}
\begin{aligned}
	\alpha_{10}&=1,&\alpha_{20} &= 9.2600312554031827\cdot10^{-1}, &\alpha_{21} &= 7.3996874459681783\cdot10^{-2},\\
\alpha_{30} &= 7.0439040373427619\cdot10^{-1},
&\alpha_{31} &= 2.0662904223744017\cdot10^{-10},
&\alpha_{32} &= 2.9560959605909481\cdot10^{-1},\\
\beta_{10} &= 4.7620819268131703\cdot10^{-1},
&\beta_{20} &= 7.7545442722396801\cdot10^{-2},
&\beta_{21} &= 5.9197500149679749\cdot10^{-1},\\
\beta_{30} &= 2.0044747790361456\cdot10^{-1},
&\beta_{31} &= 6.8214380786704851\cdot10^{-10},
&\beta_{32} &= 5.9121918658514827\cdot10^{-1},\\
\zeta&=0.62889380778287493358,
&\eta_1&=0.37110619221712506642 - \eta_2, &
\eta_3&=-1.2832127371313151768\eta_2 \\
&&&&&\hphantom{=}+ 0.6146025595987523739\\
\eta_4&=2.2248760403511226405, &n_1&=0.25690460257320105191, &
n_2&=1-n_1
\end{aligned}
\end{equation}
in accordance with \cite{MR3969000}.
Here, $\eta_2$ is a free parameter satisfying $\eta_2\in[0,r_1]$ with $r_1=0.37110619221712509$, so that we refer to this scheme as SSPMPRK3($\eta_2$).

As the first step, we apply this scheme to the linear test problem \eqref{eq:Dahlquist_System} and rewrite it in the
matrix-vector notation. For this, we again make use of equation \eqref{eq:-sum(dij)} and the fact that the production and destruction terms are linear, which results in
\begin{equation}\label{eq:SSPMPRK3MatrixVector}
	\begin{aligned}
		\bzero&=\bm \Phi_1(\by^n,\by^{(1)}) = \alpha_{10}\by^n+\beta_{10} \Delta t\bA\by^{(1)}-\by^{(1)},\\
		\bzero&=\bm \Phi_{\bm\rho}(\by^n,\by^{(1)},\bm \rho) = n_1\by^{(1)}+n_2(\diag(\by^{(1)}))^{2}(\diag(\by^{n}))^{-1}\bm 1-\bm \rho,\\
		\bzero&=\bm \Phi_2(\by^n,\by^{(1)},\bm\rho,\by^{(2)}) = \alpha_{20}\by^n+\alpha_{21}\by^{(1)}+\Delta t\bA\diag(\by^{(2)})(\diag(\bm \rho))^{-1}(\beta_{20}\by^n+\beta_{21}\by^{(1)})-\by^{(2)},\\
		\bzero&=\bm \Phi_{\bm a}(\by^n,\by^{(1)},\bm a) = \eta_1\by^n+\eta_2\by^{(1)}+\Delta t\bA\diag(\bm a)(\diag(\by^n))^{s-1}(\diag(\by^{(1)}))^{-s}(\eta_{3}\by^n+\eta_{4}\by^{(1)})-\bm a,\\
		\bzero&=\bm \Phi_{\bm \sigma}(\by^n,\by^{(2)},\bm \rho,\bm a,\bm \sigma) = \bm a+\zeta(\diag(\by^{(2)}))(\diag(\bm \rho))^{-1}\by^n-\bm \sigma,\\
		\bzero&=\bm \Phi_{n+1}(\by^n,\by^{(1)},\bm\rho,\by^{(2)},\by^{n+1})\\& = \alpha_{30}\by^n+\alpha_{31}\by^{(1)}+\alpha_{32}\by^{(2)}+\Delta t\bA\diag(\by^{n+1})(\diag(\bm \sigma)^{-1}(\beta_{30}\by^n+\beta_{31}\by^{(1)}+\beta_{32}\by^{(2)})-\by^{n+1},
	\end{aligned}
\end{equation}
where we omitted to write the arguments as functions of $\by^n$.
Nevertheless, we want to point out that all functions from above are $\mathcal C^2$-maps for positive arguments when the arguments are interpreted as independent variables. Thus, the map $\bg$, which is determined by solving linear systems, is in $\mathcal C^2$. Due to Remark \ref{rem:C2->C1}, the first derivatives are Lipschitz continuous for a sufficiently small neighborhood of $\by^*$. 

Also, we can prove $\bg(\bv)=\bv$ for all $\bv\in \ker(\bA)\cap \R^N_{>0}$ as follows. We know that $\bm \Phi_1(\by^*,\by^*)=\bzero$, and hence,  $\by^n=\by^*$ implies $\by^{(1)}=\by^*$  as $\by^{(1)}$ is uniquely determined by $\by^n$. Analogously, we conclude $\bm \rho(\by^*)=\by^*$ as $n_1+n_2=1$. As a consequence, we conclude from $a_{20}+a_{21}=1$ at machine precision that also $\by^{(2)}(\by^*)=\by^*$. However, $\bm a(\by^*)=(\eta_1+\eta_2)\by^*$, from which it follows that $\bm \sigma(\by^*)=(\eta_1+\eta_2)\by^*+z\by^*=\by^*$ since $\eta_1+\eta_2=1-\zeta$. Finally $\by^{n+1}(\by^*)=\bg(\by^*)=\by^*$ because of $\sum_{i=0}^2\alpha_{3i}=1$ is true at machine precision. 

In the following we use $a_{20}+a_{21}=1$, $\sum_{i=0}^2\alpha_{3i}=1$ and $\alpha_{10}=1$ as well as the values of the functions evaluated at $\by^*$ without further notice. 

Moreover, we can observe that $\bg$ conserves all linear invariants as follows. First, $\bn^T\bA=\bzero$ implies \[\bn^T\by^{(1)}=\alpha_{10}\bn^T \by^n+\beta_{10}\Delta t\bn^T\bA\by^{(1)}=\bn^T\by^n.\]
As a consequence, we obtain
\[\bn^T\by^{(2)}=\alpha_{20}\bn^T \by^n+\alpha_{21}\bn^T\by^{(1)}+\bzero=(\alpha_{20}+\alpha_{21})\bn^T\by^n=\bn^T\by^n.\]
Altogether, we find that $\bg$ is linear invariants preserving due to
\[\bn^T\bg(\by^n)=\bn^T\by^{n+1}=\sum_{i=0}^2\alpha_{3i}\bn^T\by^n+\bzero=\bn^T\by^n.\] Hence, also in the third order case, the map $\bg$ satisfies all conditions for applying Theorem \ref{Thm:_Asym_und_Instabil} and Theorem \ref{Thm_MPRK_stabil}. Therefore, we are now interested in computing the Jacobian of $\bg$, which can be done by using the same techniques and notations as for the second order SSPMPRK scheme.
Total differentiation of the last equation of \eqref{eq:SSPMPRK3MatrixVector} and solving for $\bD\bg(\by^*)$ yield
\begin{equation}\label{eq:Dg(y*)_Formula_SSP3}
\bD\bm\bg(\by^*)=-(\bD^*_{n+1}\bm \Phi_{n+1})^{-1}(\bD^*_n\bm \Phi_{n+1}+\bD^*_1\bm \Phi_{n+1}\bD^*\by^{(1)}+\bD^*_2\bm \Phi_{n+1}\bD^*\by^{(2)}+\bD^*_{\sigma}\bm \Phi_{n+1}\bD^*\bm \sigma),
\end{equation}
if $(\bD^*_{n+1}\bm \Phi_{n+1})^{-1}$ exists. Hence, we need formulas for $\bD^*\by^{(1)}, \bD^*\by^{(2)}$ and $\bD^*\bm \sigma$. We use the same strategies as for the second order scheme and obtain by means of a total differentiation of the corresponding equation in \eqref{eq:SSPMPRK3MatrixVector} the formulas
\begin{equation}\label{eq:Dy-Dsigma}
\begin{aligned}
\bD^*\by^{(1)}&=-(\bD_1^*\bm\Phi_1)^{-1}\bD_n^*\bm\Phi_1,\\
\bD^*\by^{(2)}&=-(\bD^*_2\bm \Phi_2)^{-1}(\bD^*_n\bm \Phi_2+\bD^*_1\bm \Phi_2\bD^*\by^{(1)}+\bD^*_{\rho}\bm \Phi_2\bD^*\bm\rho),\\
\bD^*\bm\sigma&=-(\bD^*_{\sigma}\bm \Phi_{\bm \sigma})^{-1}(\bD^*_n\bm \Phi_{\bm \sigma}+\bD^*_2\bm \Phi_{\bm \sigma}\bD^*\by^{(2)}+\bD^*_{\rho}\bm \Phi_{\bm \sigma}\bD^*\bm\rho+\bD^*_{a}\bm \Phi_{\bm \sigma}\bD^*\bm a),
\end{aligned}
\end{equation}
provided that the inverses exist. However, to compute the last two Jacobians, we now require to have knowledge about $\bD^*\bm \rho$ and $\bD^*\bm a$. These Jacobians can be obtained by
\begin{equation}\label{eq:Drho,Da}
\begin{aligned}
\bD^*\bm \rho&=-(\bD_{\bm\rho}^*\Phi_{\bm\rho})^{-1}(\bD^*_n\bm \Phi_{\bm\rho}+\bD^*_1\bm \Phi_{\bm\rho}\bD^*\by^{(1)}),\\
\bD^*\bm a&=-(\bD^*_{a}\bm \Phi_{\bm a})^{-1}(\bD^*_n\bm \Phi_{\bm a}+\bD^*_1\bm \Phi_{\bm a}\bD^*\by^{(1)}),
\end{aligned}
\end{equation}
if the expressions are defined. Starting off with the calculation of $\bD^*\by^{(1)}$, we obtain
\begin{equation*}
	\begin{aligned}
		\bD^*_n\bm \Phi_1=\alpha_{10}\bI, \quad \bD^*_1\bm \Phi_1=\beta_{10} \Delta t\bA-\bI.
	\end{aligned}
\end{equation*}
Since $\beta_{10}>0$ we can use \eqref{eq:Dy-Dsigma} to conclude that
\begin{equation*}
\bD^*\by^{(1)}=-(\beta_{10} \Delta t\bA-\bI)^{-1}\cdot \alpha_{10}\bI=(\bI-\beta_{10} \Delta t\bA)^{-1}
\end{equation*}
is defined.
Next we focus on $\bD^*\bm \rho$ so that we can compute $\bD^*\by^{(2)}$ afterwards. For this purpose, we use again that diagonal matrices commute and that $\diag(\bv)\bw=\diag(\bw)\bv$ holds. Hence, we find
\begin{equation*}
	\begin{aligned}
		\bD^*_n\bm \Phi_{\bm\rho}&=-n_2\bI, \quad \bD^*_1\bm \Phi_{\bm\rho}=(n_1+2n_2)\bI, \quad \bD_{\bm\rho}^*\Phi_{\bm\rho}=-\bI,
	\end{aligned}
\end{equation*}
and due to \eqref{eq:Drho,Da},
\begin{equation*}
\bD^*\bm \rho=-n_2\bI+(n_1+2n_2)(\bI-\beta_{10} \Delta t\bA)^{-1}.
\end{equation*}
The computation of the following Jacobians requires the same technique as described in equations \eqref{eq:ProductRuleDiag} and \eqref{eq:trick}, from which we get
\begin{equation*}
	\begin{aligned}
		\bD^*_n\bm \Phi_2&=\alpha_{20}\bI+\beta_{20}\Delta t\bA, \quad \bD^*_1\bm \Phi_2=\alpha_{21} \bI+\beta_{21}\Delta t\bA, \quad \bD^*_{\rho}\bm \Phi_2=-(\beta_{20}+\beta_{21})\Delta t\bA\qta\\ \bD^*_2\bm \Phi_2&=(\beta_{20}+\beta_{21}) \Delta t\bA-\bI,
	\end{aligned}
\end{equation*}
respectively.
Since $\beta_{20}+\beta_{21}>0$ the inverse of $\bD^*_2\bm \Phi_2$ exists, and thus, $\bD^*\by^{(2)}$ is formally given by \eqref{eq:Dy-Dsigma}.

Next, we need $\bD^*\bm a$ in order to find $\bD^*\bm \sigma$. Exploiting once again the ideas from \eqref{eq:ProductRuleDiag} and \eqref{eq:trick}, we obtain
\begin{equation*}
	\begin{aligned}
		\bD^*_n\bm \Phi_{\bm a}&=\eta_1\bI+(\eta_1+\eta_2)\Delta t\bA((s-1)(\eta_3+\eta_4)+\eta_3), \quad \bD^*_1\bm \Phi_{\bm a}=\eta_2 \bI+(\eta_1+\eta_2)\Delta t\bA(-s(\eta_3+\eta_4)+\eta_4),\\ \bD^*_{a}\bm \Phi_{\bm a}&=(\eta_3+\eta_4) \Delta t\bA-\bI,
	\end{aligned}
\end{equation*}
where $\bD^*_{a}\bm \Phi_{\bm a}$ is nonsingular since $\eta_3+\eta_4>0$. Hence, with \eqref{eq:Drho,Da} even the Jacobian $\bD^*\bm a$ can be determined.

Computing
\begin{equation*}
	\begin{aligned}
		\bD^*_n\bm \Phi_{\bm \sigma}&=\zeta\bI, \quad \bD^*_2\bm \Phi_{\bm \sigma}=\zeta \bI,\quad \bD^*_{\rho}\bm \Phi_{\bm \sigma}=-\zeta\bI, \quad \bD^*_{\bm a}\bm \Phi_{\bm \sigma}=\bI,\quad \bD^*_{\bm \sigma}\bm \Phi_{\bm \sigma}=-\bI,
	\end{aligned}
\end{equation*}
we are able to obtain $\bD^*\bm \sigma$ from \eqref{eq:Dy-Dsigma}. Finally, the remaining Jacobians are given by
\begin{equation*}
	\begin{aligned}
		\bD^*_n\bm \Phi_{n+1}&=\alpha_{30}\bI+\beta_{30}\Delta t\bA, \quad \bD^*_1\bm \Phi_{n+1}=\alpha_{31} \bI+\beta_{31}\Delta t\bA,\quad \bD^*_2\bm \Phi_{n+1}=\alpha_{32}\bI+\beta_{32}\Delta t\bA,\\ \bD^*_{\bm \sigma}\bm \Phi_{n+1}&=-\Delta t\bA\sum_{i=0}^2\beta_{3i}, \quad \bD^*_{n+1}\bm \Phi_{n+1}= \Delta t\bA\sum_{i=0}^2\beta_{3i}-\bI
	\end{aligned}
\end{equation*} with $\sum_{i=0}^2\beta_{3i}>0$, so that we are now in the position to compute $\bD\bg(\by^*)$ using \eqref{eq:Dg(y*)_Formula_SSP3}. As all the matrices occurring within the expressions of the Jacobians above are either the identity matrix $\bI$ or the system matrix $\bA$ from \eqref{eq:Dahlquist_System}, the stability function for the third order SSPMPRK scheme can easily be computed by calculating $\bD\bg(\by^*)$ and substituting $\Delta t\bA$ by $\Delta t\lambda=z$, so that we end up with the stability function $R(\Delta t\lambda)=R(z)$ that reads
\begin{equation*}
\begin{aligned}
R(z)=&\frac{1}{1-z\sum_{i=0}^2\beta_{3i}}\Biggl(\alpha_{30}+\beta_{30}z+\frac{\alpha_{31}+\beta_{31}z}{1-\beta_{10} z}+(\alpha_{32}+\beta_{32}z)P(z)-z\sum_{i=0}^2\beta_{3i}\Biggl(\zeta+\zeta P(z)-\zeta\left(-n2+\frac{n_1+2n_2}{1-\beta_{10}z}\right)\\
&+\frac{1}{1-(\eta_3+\eta_4)z}\left(\eta_1+(\eta_1+\eta_2)z\Bigl((s-1)(\eta_3+\eta_4)+\eta_3\Bigr)+\frac{\eta_2+(\eta_1+\eta_2)z\bigl(-s(\eta_3+\eta_4)+\eta_4\bigr)}{1-\beta_{10}z}\right)\Biggr) \Biggr),
\end{aligned}
\end{equation*}
where
\[ P(z)=\frac{1}{1-(\beta_{20}+\beta_{21})z}\Biggl(\alpha_{20}+\beta_{20}z+\frac{\alpha_{21}+\beta_{21}z}{1-\beta_{10}z}
-(\beta_{20}+\beta_{21})z\left(-n_2+\frac{n_1+2n_2}{1-\beta_{10}z}\right)\Biggr).\]
Before a detailed investigation of the stability function $R$, we summarize the above derived results by means of the following proposition.
\begin{prop}\label{Prop:SSPMPRK3_Dg(y*)}
	Let $\bg\from\R^N_{>0}\to\R^N_{>0}$ be given by the application of SSPMPRK3($\eta_2$) to the differential equation \eqref{eq:Dahlquist_System} with $\bm 1\in \ker(\bA^T)$.
	Then any $\by^*\in \ker(\bA)\cap \R^N_{>0}$ is a fixed point of $\bg$ and $\bg\in \mathcal{C}^2(\R^N_{>0},\R^N_{>0})$, whereby the first derivatives of $\bg$ are Lipschitz continuous in an appropriate neighborhood of $\by^*$. Moreover, all linear invariants are conserved and an eigenvalue $\lambda$ of $\bA$ corresponds to the eigenvalue $R(\Delta t\lambda)$ of the Jacobian of $\bg$ where
	\footnotesize{\begin{equation}\label{eq:StabfunSSPMPRK3}
		\begin{aligned}
	R(z)=&\frac{1}{1-z\sum_{i=0}^2\beta_{3i}}\Biggl(\alpha_{30}+\beta_{30}z+\frac{\alpha_{31}+\beta_{31}z}{1-\beta_{10} z}+(\alpha_{32}+\beta_{32}z)P(z)-z\sum_{i=0}^2\beta_{3i}\Biggl(\zeta+\zeta P(z)-\zeta\left(-n_2+\frac{n_1+2n_2}{1-\beta_{10}z}\right)\\
	&+\frac{1}{1-(\eta_3+\eta_4)z}\left(\eta_1+(\eta_1+\eta_2)z\Bigl((s-1)(\eta_3+\eta_4)+\eta_3\Bigr)+\frac{\eta_2+(\eta_1+\eta_2)z\bigl(-s(\eta_3+\eta_4)+\eta_4\bigr)}{1-\beta_{10}z}\right)\Biggr) \Biggr),\\
	P(z)=&\frac{1}{1-(\beta_{20}+\beta_{21})z}\Biggl(\alpha_{20}+\beta_{20}z+\frac{\alpha_{21}+\beta_{21}z}{1-\beta_{10}z}
	-(\beta_{20}+\beta_{21})z\left(-n_2+\frac{n_1+2n_2}{1-\beta_{10}z}\right)\Biggr),
\end{aligned}
	\end{equation}}
and the parameters are given in \eqref{eq:SSPMPRK3Parameters}.
\end{prop}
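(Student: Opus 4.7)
The plan is to assemble the four claims of the proposition from the computations already carried out in the surrounding narrative, organizing them as four short arguments; virtually all of the algebra has been performed in the preamble, and the proof merely records it and invokes the theorems of Section \ref{sec:stab_dyn_sys}.

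First, I would establish the smoothness assertion. Each of the six equations in \eqref{eq:SSPMPRK3MatrixVector} defines the next intermediate quantity (successively $\by^{(1)}$, $\bm\rho$, $\by^{(2)}$, $\bm a$, $\bm\sigma$, and finally $\by^{n+1}$) as the unique solution of a linear system whose coefficient matrix and right-hand side depend $\mathcal C^2$-smoothly on $\by^n\in\R^N_{>0}$ and on the previously computed quantities. Since $\beta_{10}$, $\beta_{20}+\beta_{21}$, $\eta_3+\eta_4$ and $\sum_{i=0}^2\beta_{3i}$ are strictly positive and $\sigma(\bA)\tm\C^-$, each of those coefficient matrices is invertible at $\by^n=\by^*$ and, by continuity, on a neighborhood of $\by^*$; composing $\mathcal C^2$-maps therefore gives $\bg\in\mathcal C^2(\R^N_{>0},\R^N_{>0})$, and Remark \ref{rem:C2->C1} yields Lipschitz continuity of $\bD\bg$ on a smaller neighborhood of $\by^*$.

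Second, I would verify that every $\by^*\in\ker(\bA)\cap\R^N_{>0}$ is a fixed point and that all linear invariants are preserved. For the fixed-point property, substituting $\by^n=\by^*$ into the first stage of \eqref{eq:SSPMPRK3MatrixVector} and using $\alpha_{10}=1$ together with $\bA\by^*=\bzero$ forces $\by^{(1)}=\by^*$ by uniqueness; propagating through the cascade, the relations $n_1+n_2=1$, $\alpha_{20}+\alpha_{21}=1$, $\eta_1+\eta_2+\zeta=1$ and $\sum_{i=0}^2\alpha_{3i}=1$ imply in turn $\bm\rho=\by^*$, $\by^{(2)}=\by^*$, $\bm a=(1-\zeta)\by^*$, $\bm\sigma=\by^*$, and finally $\bg(\by^*)=\by^*$. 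For invariant preservation, I would left-multiply the equations for $\by^{(1)}$, $\by^{(2)}$ and $\by^{n+1}$ by an arbitrary $\bn^T\in\ker(\bA^T)$: since $\bn^T\bA=\bzero$, the matrix-vector terms vanish and the convex-combination weights sum to one, yielding $\bn^T\by^{(1)}=\bn^T\by^{(2)}=\bn^T\bg(\by^n)=\bn^T\by^n$.

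Third, I would compute $\bD\bg(\by^*)$ by implicit differentiation stage by stage. Starting from \eqref{eq:Dy-Dsigma}, \eqref{eq:Drho,Da} and the partial Jacobians already tabulated before the proposition, one successively obtains closed-form expressions for $\bD^*\by^{(1)}$, $\bD^*\bm\rho$, $\bD^*\by^{(2)}$, $\bD^*\bm a$ and $\bD^*\bm\sigma$ as rational functions in $\bA$ and $\bI$, with all required invertibility conditions having been checked. Substituting these into \eqref{eq:Dg(y*)_Formula_SSP3} produces $\bD\bg(\by^*)$ as a rational expression whose building blocks are $\bI$ and resolvents of the form $(\bI-c\Delta t\bA)^{-1}$ for scalar $c>0$. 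All such matrices share the eigenvectors of $\bA$, so each eigenvalue $\lambda$ of $\bA$ is inherited by $\bD\bg(\by^*)$ with the scalar value obtained by the substitution $\Delta t\bA\mapsto z=\Delta t\lambda$, which delivers precisely the expression $R(z)$ displayed in \eqref{eq:StabfunSSPMPRK3}. The main obstacle is purely bookkeeping rather than conceptual: the cascade has six stages and the resulting $R(z)$ is unwieldy, so I would organize the algebra by isolating the block corresponding to $\bD^*\by^{(2)}$ into the auxiliary factor $P(z)$ already introduced in the statement, and treat the scalar identities among the numerical coefficients in \eqref{eq:SSPMPRK3Parameters} (such as $\sum_{i=0}^2\alpha_{3i}=1$ and $\eta_1+\eta_2+\zeta=1$, which only hold up to machine precision) as exact throughout the derivation, as is implicitly done in \cite{MR3969000}.
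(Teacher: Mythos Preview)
Your proposal is correct and follows essentially the same route as the paper: the proposition is a summary of the computations performed in the surrounding narrative, and your four-step organization (smoothness via Remark~\ref{rem:C2->C1}, fixed-point and invariant preservation by direct substitution and left-multiplication by $\bn^T$, Jacobian via the implicit-differentiation cascade \eqref{eq:Dg(y*)_Formula_SSP3}--\eqref{eq:Drho,Da}, and eigenvalue correspondence by the rational-in-$\bA$ structure) mirrors the paper exactly. One small imprecision: you argue invertibility of the stage matrices only in a neighborhood of $\by^*$, whereas the paper (implicitly relying on the M-matrix structure of MPRK stages) asserts unique solvability for every $\by^n>\bzero$, which is what actually justifies $\bg\in\mathcal C^2(\R^N_{>0},\R^N_{>0})$ globally rather than merely locally.
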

Next, we will prove that the third order SSPMPRK scheme possesses stable fixed points for all $\eta_2\in[0,r_1]$ when applied to the test equation.
\begin{prop}
The stability function $R(z)$ of the third order SSPMPRK scheme satisfies $R(0)=1$ and $\lvert R(z)\rvert <1$ for all $z\in \C^-\setminus\{0\}$ up to machine precision.
\end{prop}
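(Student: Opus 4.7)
The strategy mirrors the proof of Proposition~\ref{Prop:Stab_SSPMPRK2}. First I would verify $R(0)=1$ by direct evaluation of \eqref{eq:StabfunSSPMPRK3}: at $z=0$ the inner expression simplifies to $P(0)=\alpha_{20}+\alpha_{21}$, while the outer expression collapses to $\alpha_{30}+\alpha_{31}+\alpha_{32}P(0)$, both of which equal $1$ at machine precision by the same consistency identities already used to show $\bg(\by^*)=\by^*$. Next I would observe that the four linear denominators appearing in $R(z)$---namely $1-\beta_{10} z$, $1-(\beta_{20}+\beta_{21})z$, $1-(\eta_3+\eta_4)z$ and $1-z\sum_{i=0}^{2}\beta_{3i}$---vanish only at strictly positive real values of $z$, since the coefficient of $z$ is positive in each case (for $\eta_3+\eta_4$ this uses $\eta_2\in[0,r_1]$). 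Hence $R$ is holomorphic on $\overline{\C^-}$, and a short order-of-magnitude check shows that $R(z)$ tends to a finite limit as $\lvert z\rvert\to\infty$, so the Phragm\'en--Lindel\"of principle is applicable on $\C^-$.

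The heart of the proof is the bound $\lvert R(\ii b)\rvert\le 1$ for all $b\in\R$ with equality only at $b=0$. Unlike in Proposition~\ref{Prop:Stab_SSPMPRK2}, where the stability function was fully symbolic and $\lvert R(\ii b)\rvert^2-1$ admitted the clean factorization \eqref{eq:expProveSSP2}, here most coefficients in \eqref{eq:SSPMPRK3Parameters} are only known as decimal approximations and only $\eta_2\in[0,r_1]$ remains symbolic. My plan is to express $\lvert R(\ii b)\rvert^2-1$ as a rational function of $b$ with the manifestly positive denominator $\lvert 1-\beta_{10}\ii b\rvert^2\lvert 1-(\beta_{20}+\beta_{21})\ii b\rvert^2\lvert 1-(\eta_3+\eta_4)\ii b\rvert^2\lvert 1-\ii b\sum_i\beta_{3i}\rvert^2$ and numerator a polynomial in $b^2$ whose coefficients are themselves polynomials in $\eta_2$. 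Because $R(0)=1$, this numerator is automatically divisible by $b^2$, so it suffices to check with a computer algebra system that the quotient is non-positive for all $b\in\R$ and all $\eta_2\in[0,r_1]$, and strictly negative for $b\ne 0$. This is the step that is only verifiable up to the accuracy of the floating-point constants in \eqref{eq:SSPMPRK3Parameters}, which is exactly what the phrase ``up to machine precision'' refers to.

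Once the imaginary-axis bound is in hand, the rest is automatic. Phragm\'en--Lindel\"of applied to the closed left half-plane yields $\lvert R(z)\rvert\le 1$ on $\overline{\C^-}$, and since $R$ is a non-constant rational function the maximum modulus principle upgrades this to strict inequality throughout the open half-plane, while the imaginary-axis analysis takes care of the boundary away from the origin. The main obstacle is entirely computational: the nested rational expression \eqref{eq:StabfunSSPMPRK3} expands into a high-degree symbolic object in $b$ and $\eta_2$, and certifying non-positivity of the resulting numerator---an analysis that inherits the rounding error of the coefficients in \eqref{eq:SSPMPRK3Parameters}---is the reason the statement cannot be asserted as an exact inequality and must be qualified accordingly.
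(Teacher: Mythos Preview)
Your proposal is correct and follows essentially the same route as the paper. The paper carries out the imaginary-axis step concretely by first reducing $R$ to a quotient of two quartics in $z$, then showing that the coefficients $c_2,c_4,c_6,c_8$ of $\lvert R(\ii y)\rvert^2-1=\sum_{k=1}^4 c_{2k}y^{2k}$ are each negative for all $\eta_2\in[0,r_1]$---exactly the kind of coefficient-wise certification you anticipate as a CAS check.
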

\begin{proof}
It is straightforward to see that $R(0)=\alpha_{30}+\alpha_{31}+\alpha_{32}(\alpha_{20}+\alpha_{21})$ holds true. Up to machine precision, we obtain $\alpha_{20}+\alpha_{21}=1$ and $\alpha_{30}+\alpha_{31}+\alpha_{32}=1$, so that $R(0)=1$. Also, as $\alpha_{ij},\beta_{ij},\eta_3+\eta_4>0$, see \eqref{eq:SSPMPRK3Parameters}, no poles of $R$ are located in $\C^-$.
Furthermore, by a technical calculation we can rewrite $R$ to receive
\begin{equation*}
R(z)=\frac{\sum_{j=0}^4a_jz^j}{\sum_{j=0}^4b_jz^j},
\end{equation*}
where, for $\eta_2\in [0,r_1]\tm [0,\frac12)$ the coefficients are given by
\begin{equation*}
\begin{aligned}
	a_0&=\frac{0.47620819268131705757\eta_2 - 1.0537480911094115481}{0.47620819268131703\eta_2 - 1.0537480911094114871}>0,\\
	a_1&=\frac{-3.1507612671062001337\eta_2 + 3.9798736646158920698 + 0.61107641837494959323\eta_2^2}{0.47620819268131703\eta_2 - 1.0537480911094114871}<0,\\
	a_2&=\frac{2.4343280828365809236\eta_2 - 2.5818776483048969774 - 0.57282016379130601724\eta_2^2}{0.47620819268131703\eta_2 - 1.0537480911094114871}>0,\\
	a_3&=\frac{0.6548068883713070549\eta_2 - 0.81603432814746304744 - 0.1292603911580354457\eta_2^2}{0.47620819268131703\eta_2 - 1.0537480911094114871}>0,\\
a_4&=\frac{-0.59574557514538034065\eta_2 + 0.64052974630005292675 + 0.13841284380675759373\eta_2^2}{0.47620819268131703\eta_2 - 1.0537480911094114871}<0,\\
b_0&=1>0,\\
b_1&=-4.7768739020212929733 + 1.2832127371313151768\eta_2<0,\\
b_2&=6.7270587897458664634 - 2.4860903284764154151\eta_2>0,\\
b_3&=-3.7332290665687486456 + 1.5730472371819288192\eta_2<0,\\
b_4&= 0.71670702950202557447-0.32389312216150656421\eta_2>0.
\end{aligned}
\end{equation*}
Substituting $z=\ii y$ with $y\in \R\setminus\{0\}$, we find
\begin{equation*}
\lvert R(\ii y)\rvert^2=\frac{(y^4a_4-y^2a_2+a_0)^2+(-y^3a_3+ya_1)^2}{(y^4b_4-y^2b_2+1)^2+(-y^3b_3+yb_1)^2},
\end{equation*}
so that $\lvert R(\ii y)\rvert<1$ is equivalent to
\begin{equation*}
(y^4a_4-y^2a_2+a_0)^2+(-y^3a_3+ya_1)^2-((y^4b_4-y^2b_2+1)^2+(-y^3b_3+yb_1)^2)<0.
\end{equation*}
Collecting powers of $y$, one can rewrite the above inequality in the form
\begin{equation*}
(a_4^2-b_4^2)y^8+(-2a_2a_4+a_3^2+2b_2b_4-b_3^2)y^6+(2a_0a_4-2a_1a_3+a_2^2+2b_1b_3-b_2^2-2b_4)y^4+(-2a_0a_2+a_1^2-b_1^2+2b_2)y^2+a_0^2-1<0.
\end{equation*}
At machine precision, we obtain $a_0=1$, so that $a_0^2-1=0$. Next, our strategy is to prove that all nonzero coefficients of $y^k$, in the following denoted by $c_k$, for $k=2,4,6,8$ are negative.

For $\eta_2\leq r_1< \frac12$, it suffices for our argument to round to two decimal places in the following expressions, which yields
\begin{equation*}
\begin{aligned}
	c_8&\approx\frac{-0.74\eta_2^2+1.18\eta_2+0.20\eta_2^3-0.71-0.02\eta_2^4}{(\eta_2-2.21)^2},\\
	c_6&\approx\frac{-3.45\eta_2^2 + 5.70\eta_2+0.92\eta_2^3 - 3.51 - 0.09\eta_2^4}{(\eta_2 - 2.21)^2},\\
	c_4&\approx\frac{4.41(0.53\eta_2 - 0.43 + 0.03\eta_2^3 - 0.22\eta_2^2)}{(\eta_2 - 2.21)^2},\\
	10^{14}c_2&\approx\frac{\eta_2(\eta_2 - 1 - 0.2\eta_2^2 + 0.03\eta_2^3)}{(0.48\eta_2 - 1.05)^2}.
\end{aligned}
\end{equation*}
First of all, the denominators occurring in any of the above $c_k$ are positive. Also, positive terms in the numerator are multiplied with powers of $\eta_2<\frac12$ and thus are smaller than the absolute value of the corresponding constant, which is always negative. This holds true even if the rounding error is taken into account, i.\,e.\ after adding $10^{-2}$ to positive terms and subtracting it from negative expressions. This proves that $c_k<0$, and thus, $\abs{R(\ii y)}<1$ for all $y\in \R\setminus\{0\}$.

Finally, we can perform the same steps as in the proof of Proposition~\ref{Prop:Stab_SSPMPRK2} to conclude even $\lvert R(z)\rvert<1$ for all $z\in \C^-\setminus\{0\}$ by applying the Phragm\'{e}n-Lindel\"of principle for the union of the origin and the interior of $\C^-$, as well as the maximum modulus principle.
\end{proof}
As an immediate consequence of this proposition in combination with Theorem \ref{Thm:_Asym_und_Instabil} and Theorem \ref{Thm_MPRK_stabil}, we obtain the following results.
\begin{cor}\label{Cor:SSPMPRK3stab}
	Let $\by^*$ be a positive steady state of the differential equation \eqref{eq:Dahlquist_System}. Then $\by^*$ is a stable fixed point of the SSPMPRK3($\eta_2$) scheme for all $\Delta t>0$ and $\eta_2\in [0,r_1]$.
\end{cor}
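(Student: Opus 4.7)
The strategy is to read off the spectrum of $\bD\bg(\by^*)$ from the stability function $R$ established in Proposition \ref{Prop:SSPMPRK3_Dg(y*)}, and then apply Theorem \ref{Thm_MPRK_stabil} a). All hypotheses of that theorem are already verified in Proposition \ref{Prop:SSPMPRK3_Dg(y*)}: for every $\eta_2\in [0,r_1]$ the map $\bg\from \R^N_{>0}\to \R^N_{>0}$ is a $\mathcal C^2$-map whose first derivatives are Lipschitz continuous on some neighborhood $\mathcal D$ of $\by^*$ (by Remark \ref{rem:C2->C1}), every element of $\ker(\bA)\cap \R^N_{>0}$ is a fixed point of $\bg$, and $\bg$ conserves all linear invariants. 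It therefore only remains to inspect the eigenvalues of $\bD\bg(\by^*)$.

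Next I would use the fact that $\bD\bg(\by^*)$ is, by construction, a rational function of $\bA$ and $\bI$ which is well defined on the spectrum of $\bA$, since Proposition \ref{Prop:SSPMPRK3_Dg(y*)} shows that no pole of $R$ lies in $\C^-$ and the hypotheses $\bA\by^*=\bzero$ together with $\sigma(\bA)\tm \C^-$ (required for a stable steady state) guarantee $\sigma(\bA)\tm \C^-$. Hence the spectrum of $\bD\bg(\by^*)$ equals $\{R(\Delta t\lambda)\mid \lambda\in \sigma(\bA)\}$ counted with algebraic multiplicity, and the $k$-dimensional kernel of $\bA$ accounts for exactly $k$ eigenvalues equal to $R(0)$.

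The main step, and the only place where the work of the preceding proposition is used, is combining the two statements $R(0)=1$ (at machine precision) and $\lvert R(z)\rvert<1$ for all $z\in \C^-\setminus\{0\}$. Since the $k$ unit eigenvalues of $\bD\bg(\by^*)$ coming from $\ker(\bA)$ are precisely the $k$ eigenvalues $\bD\bg(\by^*)\bv_i=\bv_i$ singled out in Theorem \ref{Thm_MPRK_stabil}, the ``remaining'' $N-k$ eigenvalues of $\bD\bg(\by^*)$ are of the form $R(\Delta t\lambda)$ with $\lambda\in \sigma(\bA)\setminus\{0\}$. For any such $\lambda$ and any $\Delta t>0$, one has $\Delta t\lambda\in \C^-\setminus\{0\}$, hence $\lvert R(\Delta t\lambda)\rvert<1$ by the preceding proposition.

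Therefore the hypothesis of Theorem \ref{Thm_MPRK_stabil} a) is satisfied for every $\Delta t>0$ and every $\eta_2\in [0,r_1]$, and the theorem yields the claimed Lyapunov stability of $\by^*$. I do not anticipate any real obstacle here beyond the multiplicity bookkeeping just described; in particular, purely imaginary nonzero eigenvalues of $\bA$, which are permitted by the standing assumption on the test equation, are harmless precisely because the strict bound $\lvert R(z)\rvert<1$ was established on the full punctured closed left half-plane rather than only on its open interior.
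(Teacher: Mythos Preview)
Your proposal is correct and follows essentially the same route as the paper: the corollary is stated there as an immediate consequence of the preceding proposition (which gives $R(0)=1$ and $\lvert R(z)\rvert<1$ on $\C^-\setminus\{0\}$) together with Theorem \ref{Thm_MPRK_stabil}. Your write-up simply spells out the multiplicity bookkeeping that the paper leaves implicit, and your observation about purely imaginary nonzero eigenvalues being covered by the strict bound on the full punctured half-plane is exactly the point.
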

\begin{cor}\label{Cor:SSPMPRK3stab1}
	Let the unique steady state $\by^*$ of the initial value problem \eqref{eq:Dahlquist_System}, \eqref{eq:IC} be positive. Then the iterates of SSPMPRK3($\eta_2$) locally converge towards $\by^*$ for all $\Delta t>0$ and $\eta_2\in [0,r_1]$.
\end{cor}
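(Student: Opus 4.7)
The plan is to read off the corollary as a direct application of Theorem~\ref{Thm_MPRK_stabil}~b) combined with the spectral information from Proposition~\ref{Prop:SSPMPRK3_Dg(y*)} and the preceding proposition on $\lvert R(z)\rvert<1$. No genuinely new computation is required; the corollary is really just the conclusion of the machinery that was set up for exactly this purpose.

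First, I would verify the structural hypotheses of Theorem~\ref{Thm_MPRK_stabil}. By Proposition~\ref{Prop:SSPMPRK3_Dg(y*)}, the map $\bg$ is $\mathcal{C}^2$ on $\R^N_{>0}$ with Lipschitz first derivatives on a suitable neighborhood of $\by^*$, it fixes every element of $\ker(\bA)\cap\R^N_{>0}$ (in particular $\by^*$ itself), and it conserves all linear invariants. Consequently the iterates stay in the affine hyperplane $H$ defined by $\bN\by=\bN\by^0=\bN\by^*$, which already contains the initial datum $\by^0$, so the starting-vector condition $\by^0\in H\cap D$ of part~b) is automatic as soon as $\by^0$ is taken close enough to $\by^*$.

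Second, I would check the spectral hypothesis from part~a). Uniqueness of the positive steady state inside its invariant affine subspace forces $\dim\ker(\bA^T)=\dim\ker(\bA)=1$, and because the stability assumption on \eqref{eq:Dahlquist_System} constrains eigenvalues of $\bA$ with vanishing real part to have Jordan blocks of size one, the eigenvalue $\lambda=0$ has algebraic multiplicity one. By Proposition~\ref{Prop:SSPMPRK3_Dg(y*)} this eigenvalue contributes $R(0)=1$ to the spectrum of $\bD\bg(\by^*)$, which is precisely the unit eigenvalue associated with $\ker(\bA)$ predicted by Theorem~\ref{Thm_MPRK_stabil}. Every remaining eigenvalue $\lambda$ of $\bA$ satisfies $\re(\lambda)\leq 0$ and $\lambda\neq 0$, so for any $\Delta t>0$ the argument $z=\Delta t\lambda$ lies in $\C^-\setminus\{0\}$, and the preceding proposition yields $\lvert R(\Delta t\lambda)\rvert<1$ up to machine precision.

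With both requirements of Theorem~\ref{Thm_MPRK_stabil}~b) verified, I would conclude that there is a $\delta>0$ with the property that $\norm{\by^0-\by^*}<\delta$ implies $\by^n\to\by^*$, which is exactly the asserted local convergence for all $\Delta t>0$ and $\eta_2\in[0,r_1]$. The only point deserving explicit mention is that the bound $\lvert R(z)\rvert<1$ inherited from the previous proposition holds only up to machine precision, so the corollary carries the same qualification; this is more a bookkeeping remark than a genuine obstacle, and indeed the main difficulty of the section has already been absorbed into the proof of the previous proposition.
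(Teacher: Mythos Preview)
Your approach is correct and matches the paper's, which states both Corollaries~\ref{Cor:SSPMPRK3stab} and~\ref{Cor:SSPMPRK3stab1} together as immediate consequences of Theorem~\ref{Thm:_Asym_und_Instabil}, Theorem~\ref{Thm_MPRK_stabil}, and the preceding proposition on $\lvert R(z)\rvert$.

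One minor inaccuracy worth flagging: the uniqueness of the steady state of the initial value problem does \emph{not} force $\dim\ker(\bA)=1$. Rather, for any $k=\dim\ker(\bA)$, the intersection $\ker(\bA)\cap H$ is always a singleton (this is exactly how $\by^*$ is determined from $\by^0$; compare the paper's third test problem \eqref{eq:initProb4dim}, which has a double zero eigenvalue). Your deduction that the algebraic multiplicity of $0$ is one is therefore unjustified. Fortunately this does not affect the argument: Theorem~\ref{Thm_MPRK_stabil} is formulated for arbitrary $k$, all $k$ zero eigenvalues of $\bA$ contribute the value $R(0)=1$ to the spectrum of $\bD\bg(\by^*)$ and are accounted for by the eigenvectors $\bv_1,\dotsc,\bv_k$, and the remaining $N-k$ eigenvalues still satisfy $\lvert R(\Delta t\lambda)\rvert<1$ by the previous proposition. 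So the conclusion of Theorem~\ref{Thm_MPRK_stabil}~b) applies exactly as you state.
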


	\section{Numerical Experiments}\label{sec:Num_Tests}
	In order to verify the stability properties of the second and third order SSPMPRK schemes as stated in the Corollaries \ref{Cor:SSPMPRK2stab}, \ref{Cor:SSPMPRK2stab1}, \ref{Cor:SSPMPRK3stab} and \ref{Cor:SSPMPRK3stab1}, we consider three linear positive and conservative PDS test cases introduced in \cite{IKM22Sys}.

All systems matrices have an eigenvalue $\lambda=0$, since the test problems are conservative. Furthermore, the following test cases are chosen in such a way that all nonzero eigenvalues either lie in $\R^-$ or in $\C^-\setminus\R^-$. Moreover, as we proved that the SSPMPRK schemes conserve all linear invariants when applied to a linear system, we also consider a test problem with two linear invariants.\\

\subsection*{Test problem with exclusively real eigenvalues}
The linear initial value problem 
\begin{equation}
	\by'=100\Vec{-2& 1 &1\\1 &-4 &1\\1 &3 &-2}\by,\quad  \by(0)=\Vec{1\\ 9\\5}.\label{eq:initProbReal}
\end{equation}
 contains a system matrix, which has only positive off-diagonal elements and is therefore a so-called Metzler matrix. Due to the positive initial values, this ensures that each component of the solution of the initial value problem is positive for all times. By a straightforward calculation of the eigenvalues $\lambda_1=0$, $\lambda_2=-300$ and $\lambda_3=-500$ of the system matrix as well as their associated eigenvectors, the solution reads \begin{equation}
 	\by(t)=c_1\begin{pmatrix}5\\ 3\\ 7\end{pmatrix}+c_2e^{-300t}\begin{pmatrix}-1\\ 0\\ 1\end{pmatrix}+c_3e^{-500t}\begin{pmatrix}0\\ -1\\ 1\end{pmatrix}\label{eq:exsolReal}
 \end{equation}
 with coefficients $c_1=1$, $c_2=4$ and $c_3=-6$ determined by the initial condition. Since only non-positive eigenvalues are present and the absolute values of the negative eigenvalues are large, there is a fast convergence to the equilibrium state 
 \begin{equation*}
 	\by^*=\lim_{t\to\infty}\by(t)=\Vec{5\\3\\7}
 \end{equation*}
 as depicted in Figure \ref{Fig:initProbReal}. Furthermore the zero eigenvalue is simple, and hence there exists exactly one linear invariant, which is given by $\bm 1^T\by$ due to the fact that the sum of the elements in each column of the system matrix is always vanishing. This so-called conservativity  can also be observed in Figure \ref{Fig:initProbReal}.

\begin{figure}[h!]
	\centering
	\includegraphics[width=0.5\textwidth]{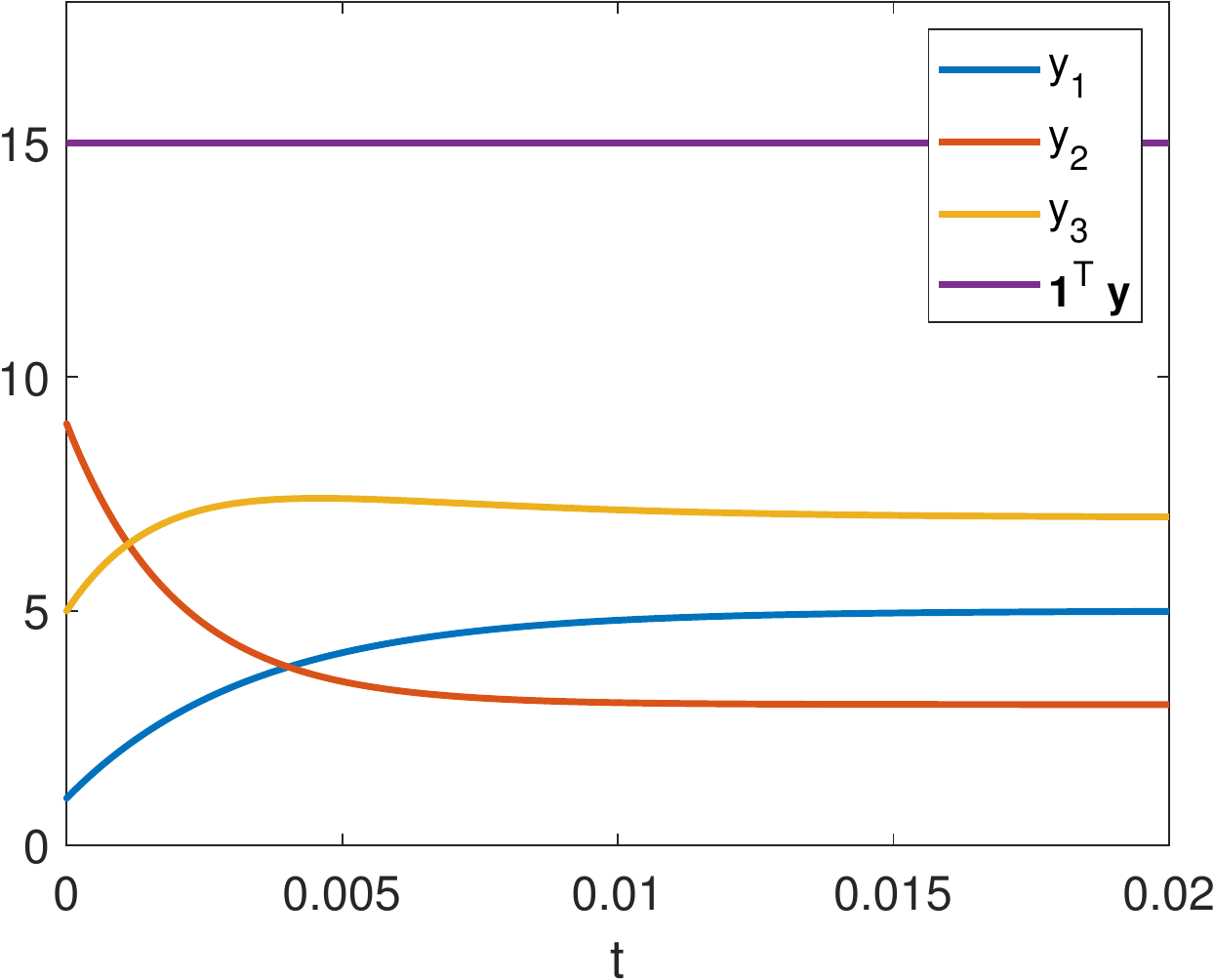}
	\caption{Exact solution \eqref{eq:exsolReal} of the initial value problem \eqref{eq:initProbReal} and the linear invariant $\bm 1^T\by$.}\label{Fig:initProbReal}
\end{figure}

\subsection*{Test problem with complex eigenvalues}

As the second test case, we consider the conservative system 
\begin{equation}
	\by'=100\Vec{-4& 3 &1\\2 &-4 &3\\2 &1 &-4}\by,\quad  \by(0)=\Vec{9\\ 20\\8}\label{eq:initProbIm}.
\end{equation}
Again, the system matrix is a Metzler matrix, so that the solution of the initial value problem is always positive due to the positive initial conditions. Considering the eigenvalues $\lambda_1=0$ , $\lambda_2=100(-6+\ii)$ and $\lambda_3=\overline{\lambda_2}$ as well as the corresponding eigenvectors of the system matrix, the solution can be written in the form
\begin{equation}
	\begin{aligned}
		\by(t)=&\begin{pmatrix}13\\ 14\\ 10\end{pmatrix}-2e^{-600t}\left(\cos \left(100t\right)\begin{pmatrix}-1\\ 0\\ 1\end{pmatrix}-\sin \left(100t\right)\begin{pmatrix}1\\ -1\\ 0\end{pmatrix}\right)\\&-6e^{-600t}\left(\cos \left(100t\right)\begin{pmatrix}1\\ -1\\ 0\end{pmatrix}+\sin \left(100t\right)\begin{pmatrix}-1\\ 0\\ 1\end{pmatrix}\right).\label{eq:exsolIm}
	\end{aligned}
\end{equation}
The nonzero complex eigenvalues have a negative real part with a large absolute value. Hence, one can expect a rapid convergence of the solution to the steady state given by
\begin{equation*}
	\by^*=\lim_{t\to\infty}\by(t)=\Vec{13\\14\\10}.
\end{equation*} 
Analogous to the first test case, the only linear invariant is $\bm 1^T\by$, which is presented together with the exact solution in Figure \ref{Fig:initProbIm}.

\begin{figure}[h!]
	\centering
	\includegraphics[width=0.5\textwidth]{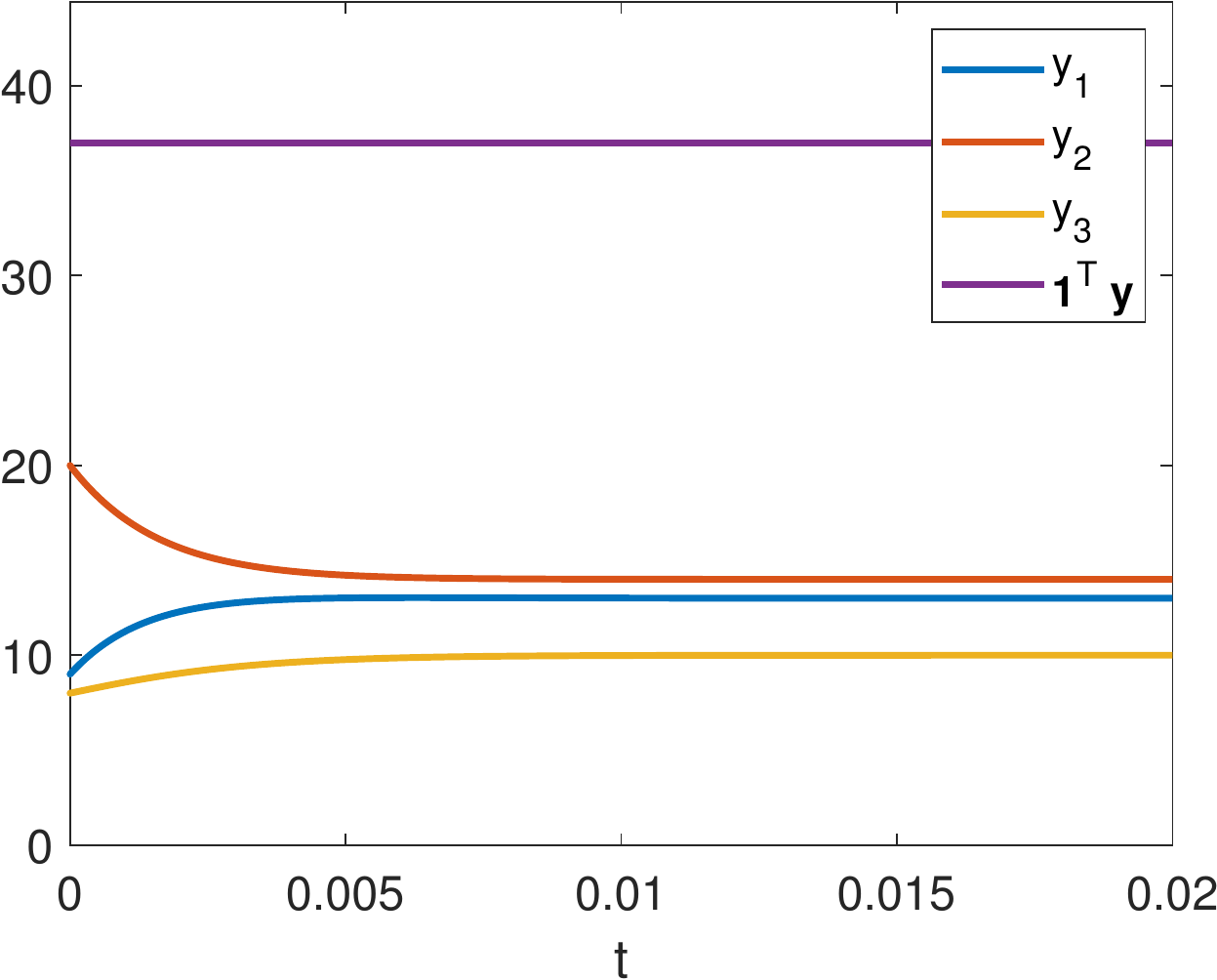}
	\caption{The exact solution \eqref{eq:exsolIm} of the initial value problem \eqref{eq:initProbIm} and the linear invariant $\bm 1^T\by$.}\label{Fig:initProbIm}
\end{figure}

\subsection*{Test problem with double zero eigenvalue}
Considering the linear initial value problem  
\begin{equation}
	\by'=100\Vec{-2& 0 &0 &1\\0 &-4 &3& 0\\0 &4& -3 &0\\ 2 & 0&0&-1}\by,\quad  \by(0)=\Vec{4\\ 1\\9\\1},\label{eq:initProb4dim}
\end{equation}
we are faced with a Metzler matrix including a double zero eigenvalue $\lambda_1 = \lambda_2=0$. Therefore, besides $\bm 1^T\by$, a second linear invariant $\bn^T\by$ with $\bn=(1,2,2,1)^T$ is present. Due to the remaining eigenvalues $\lambda_3=-300$ and $\lambda_3=-700$ and the associated eigenvectors of all eigenvalues, the solution of the initial value problem writes
\begin{equation}
	\by(t)=c_1\Vec{0\\1\\ \frac43\\0}+c_2\Vec{1\\0\\0\\2}+c_3e^{-700t}\Vec{0\\1\\-1\\0}+c_4e^{-300t}\Vec{1\\0\\ 0\\-1}\label{eq:exsol4dim}
\end{equation}
with coefficients
\begin{align*}
	c_1=\frac{30}{7},\quad c_2=\frac53, \quad c_3=-\frac{23}{7}\qta c_4=\frac73.
\end{align*}
Once again, a fast convergence to the equilibrium state
\begin{equation*}
	\by^*=\lim_{t\to\infty}\by(t)=c_1\Vec{0\\1\\ \frac43\\0}+c_2\Vec{1\\0\\0\\2}=\frac{1}{21}\Vec{7\\ 90\\ 120\\ 70}
\end{equation*}
takes place. The course of the solution together with the two linear invariants are shown in Figure \ref{Fig:initProb4dim}.
\begin{figure}[h!]
	\centering
	\includegraphics[width=0.5\textwidth]{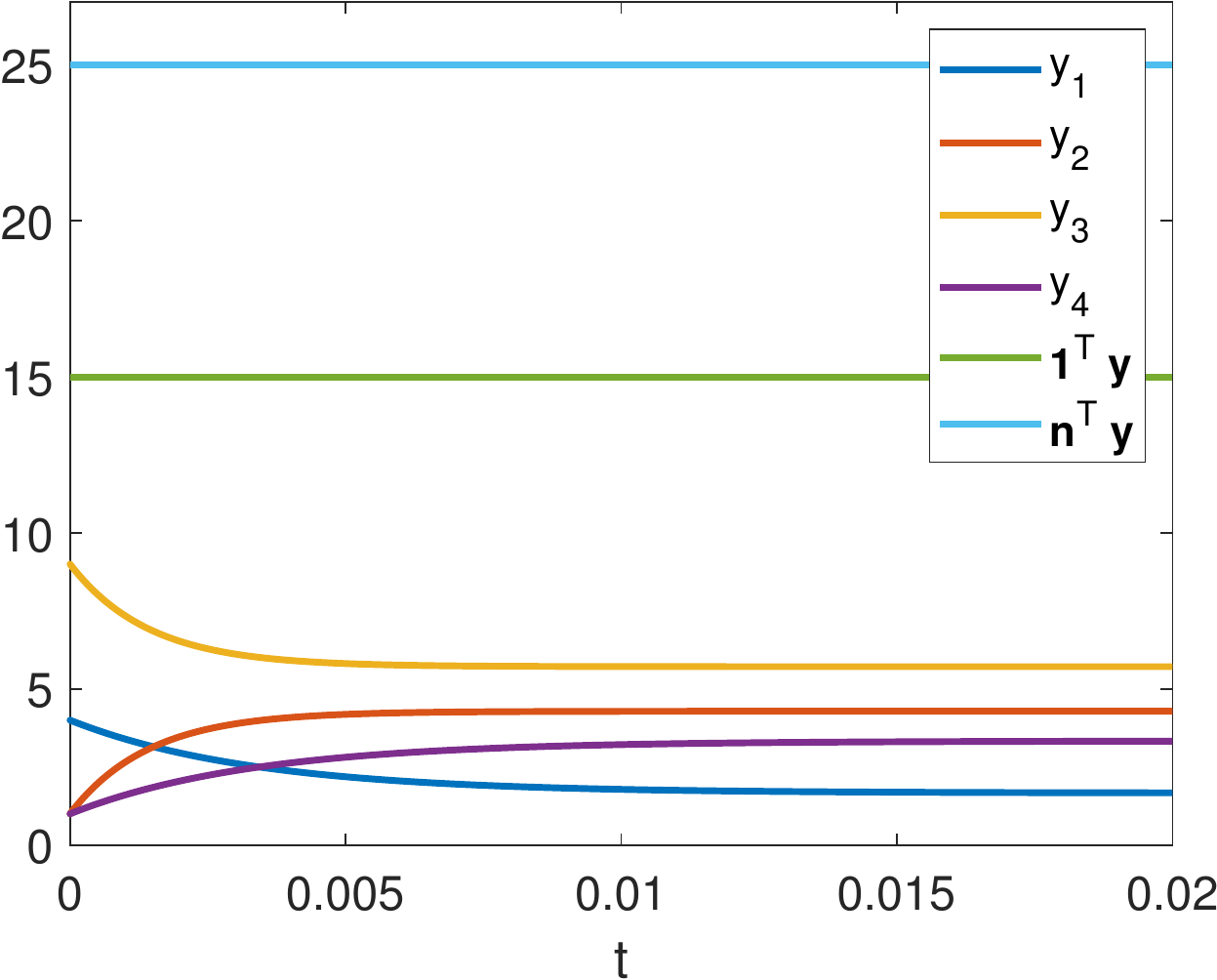}
	\caption{The exact solution \eqref{eq:exsol4dim} of the initial value problem \eqref{eq:initProb4dim} and the associated two linear invariants $\bm 1^T\by$ and $\bn^T\by$.}\label{Fig:initProb4dim}
\end{figure}

\begin{figure}
	\centering
	\includegraphics[scale=0.8]{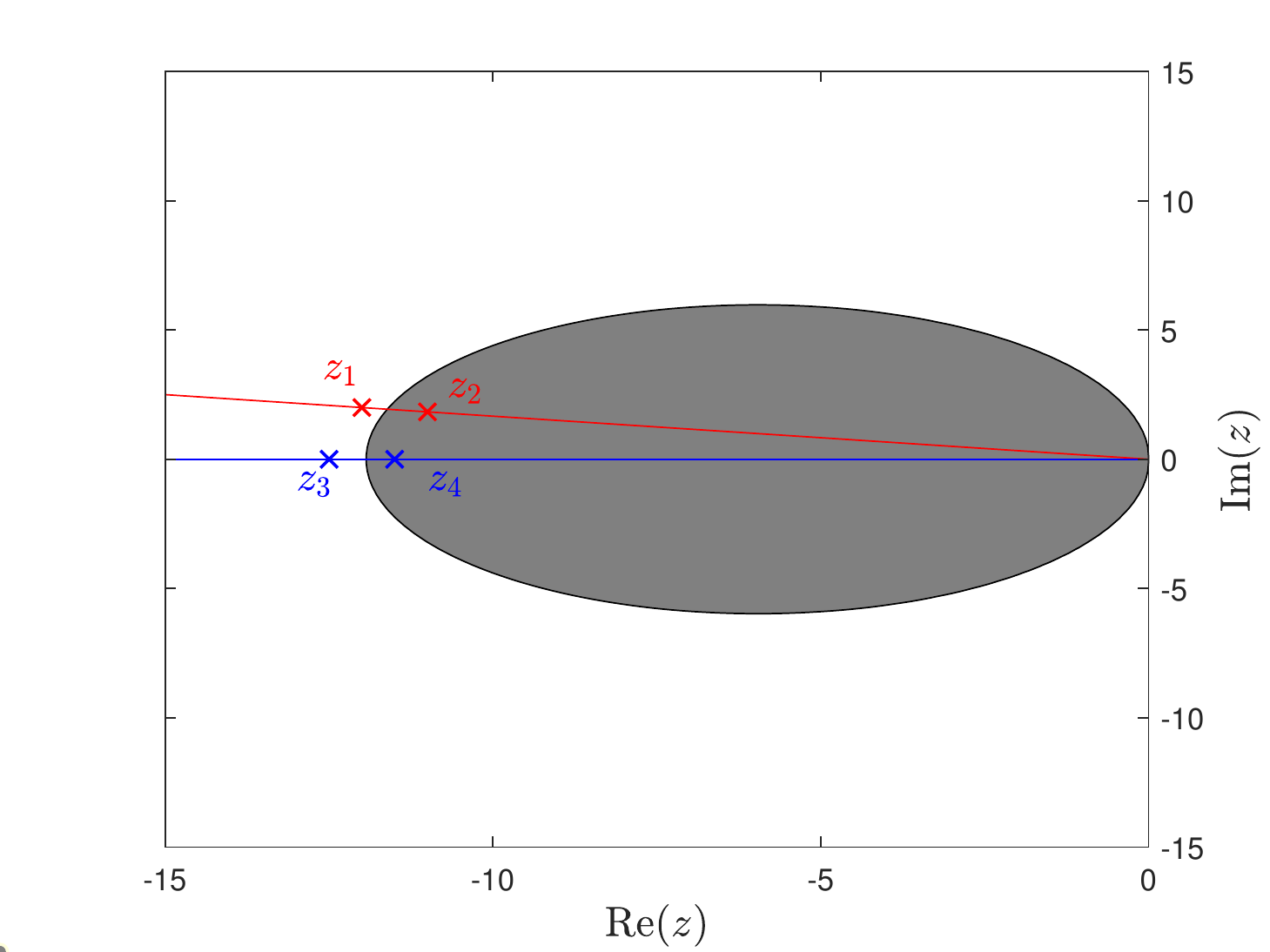}
	\caption{The stability region for the second order SSPMPRK scheme is depicted for the values $(\alpha,\beta)=(0.2,3)$. The red line is the set $\{x+\ii y\in\C^-\mid x+6y=0, \, x,y\in\R \}$ containing numbers of the form $a(-6+\ii)$ with $a\leq 0$. In particular, the red marked complex numbers are $z_1=2(-6+\ii)$ and $z_2=\frac{11}{6}(-6+\ii)$.
		The blue line is the set $\R^-$. In particular, the blue marked numbers are $z_3=-12.5$ and $z_4=-11.5$.}\label{Fig:Stabmarker}
\end{figure}

At this point we want to note that the presented test cases represent stiff problems due to the occurence of large absolute values of the corresponding eigenvalues. Hence, it is not surprising that the exact solution satisfies the inequality $\norm{\by(t)-\by^*}<2\cdot 10^{-2}$ at time $t=0.02$. 

Hereafter, we confirm numerically that SSPMPRK schemes are stable as claimed in Corollary \ref{Cor:SSPMPRK2stab} and Corollary \ref{Cor:SSPMPRK3stab}. 
Furthermore, we investigate the local convergence to the steady state solution as stated in Corollary \ref{Cor:SSPMPRK2stab1} and Corollary \ref{Cor:SSPMPRK3stab1} by choosing a comparably large time step size of $\Delta t=5$ for all examples, if not stated otherwise. 
In particular, we are interested in the properties of SSPMPRK3($\frac{1}{3}$) which is the preferred scheme presented in \cite{MR3969000}.
Moreover, we investigate SSPMPRK2($\alpha,\beta$) for three different pairs $(\alpha,\beta)$ covering all cases depicted in Figure \ref{Fig:alphabetagraph}.
For the case $\alpha>\frac{1}{2\beta}$ we choose the lower left vertex of the red rectangular from Figure \ref{Fig:alphabetagraph}, i.\,e.\ $(\alpha,\beta)=(0.2,3)$. In this case, we choose different time steps to demonstrate that the computed stability regions are correct. At this point we want to note that the eigenvalues of the system matrices from the test problems lie on the red or blue line depicted in Figure \ref{Fig:Stabmarker}. We scale the time step size $\Delta t$ in such a way that $\Delta t\rho(\bD\bg(\by^*))=z_i$ for some $i\in \{1,2,3,4\}$ so that for all test cases we consider the cases of stable as well as unstable fixed points.

 As a representative for the case $\alpha=\frac{1}{2\beta}$ we use $(\alpha,\beta)=(\frac12,1)$ which is the preferred choice presented in \cite{MR3934688}. Finally, we choose $(\alpha,\beta)=(0.1,1)$ satisfying $\alpha<\frac{1}{2\beta}$.

In the following figures, we plot the numerical approximations over different time intervals resulting in different numbers of total iterations $N_T$, whereby $\norm{\by^*-\by^{N_T}}<2\cdot 10^{-2}$ is satisfied. With a time step size of $\Delta t=5$, the third order SSPMPRK schemes satisfy this relation for $18\leq N_T\leq 25$ iterations with respect to all three test cases. Analyzing the second order SSPMPRK schemes with parameters $(\alpha,\beta)=(0.1,1)$ we find $N_T\approx 10$, and in the case of $(\alpha,\beta)=(\frac12,1)$ we have $N_T\approx 5000$. The investigation of the chose $(\alpha,\beta)=(0.2,3)$ is more delicate, as this pair is associated with a bounded stability domain. Nevertheless, choosing $\Delta t$ corresponding to the values $z_2$ and $z_4$ from figure \ref{Fig:Stabmarker}, we can observe that $N_T\approx 300$ holds.

In Figure \ref{Fig:SSPMPRK3initProblems}, the SSPMPRK3($\eta_2$) scheme is used to integrate the three test problems. The numerical experiments support the theoretical claims, i.\,e.\ the fixed points seem to be stable and locally attracting. Moreover, all linear invariants are conserved by the method.

In the subsequent figures, SSPMPRK2($\alpha, \beta$) schemes are used to solve the test problems. In all three figures \ref{Fig:MPRKinitProbReal}, \ref{Fig:MPRKinitProbIm}
 and \ref{Fig:MPRKinitProb4dim}, we can observe the same qualitative behavior. In the upper left plot, the value of $N_T$ is by far the biggest so that the preferred choice of $(\alpha,\beta)=(\frac{1}{2},1)$ seems to be the least damping scheme. Changing the value of $\alpha$ to $0.1$ results in a faster convergence towards the steady state solution, even for a time step size of $\Delta t=5$. 
 
 In the lower two figures in figures \ref{Fig:MPRKinitProbReal}, \ref{Fig:MPRKinitProbIm}
 and \ref{Fig:MPRKinitProb4dim}, the pair $(\alpha,\beta)$ lies in the critical region where the stability domain is bounded. If $\Delta t$ is chosen in such a way that $\Delta t\rho(\bD\bg(\by^*))=z_i$ for $i=2$ or $i=4$, respectively, see Figure \ref{Fig:Stabmarker}, the numerical approximations behave as expected converging towards the corresponding steady state which is a stable fixed point of the method. However, increasing $\Delta t$ by approximately $2\cdot 10^{-3}$, we find that $\Delta t\rho(\bD\bg(\by^*))=z_i$ for $i=1$ or $i=3$, respectively. As a result, even when we modify the starting vector to be $\by^0=\by^*+10^{-5}\bv$ with $\bv=(1,-2,1)^T$ in Figures \ref{Fig:MPRKinitProbReal} and \ref{Fig:MPRKinitProbIm}, or $\bv=(1,-1,1,-1)^T$ in Figure \ref{Fig:MPRKinitProb4dim}, the numerical approximation diverges from the steady state as predicted by the presented theory.
 
 Altogether, the numerical experiments support very well the theoretical results from Section \ref{sec:stab_SSPMPRK}.

\begin{figure}[!h]
	\centering
	\begin{subfigure}[t]{0.49\textwidth}
	\includegraphics[width=\textwidth]{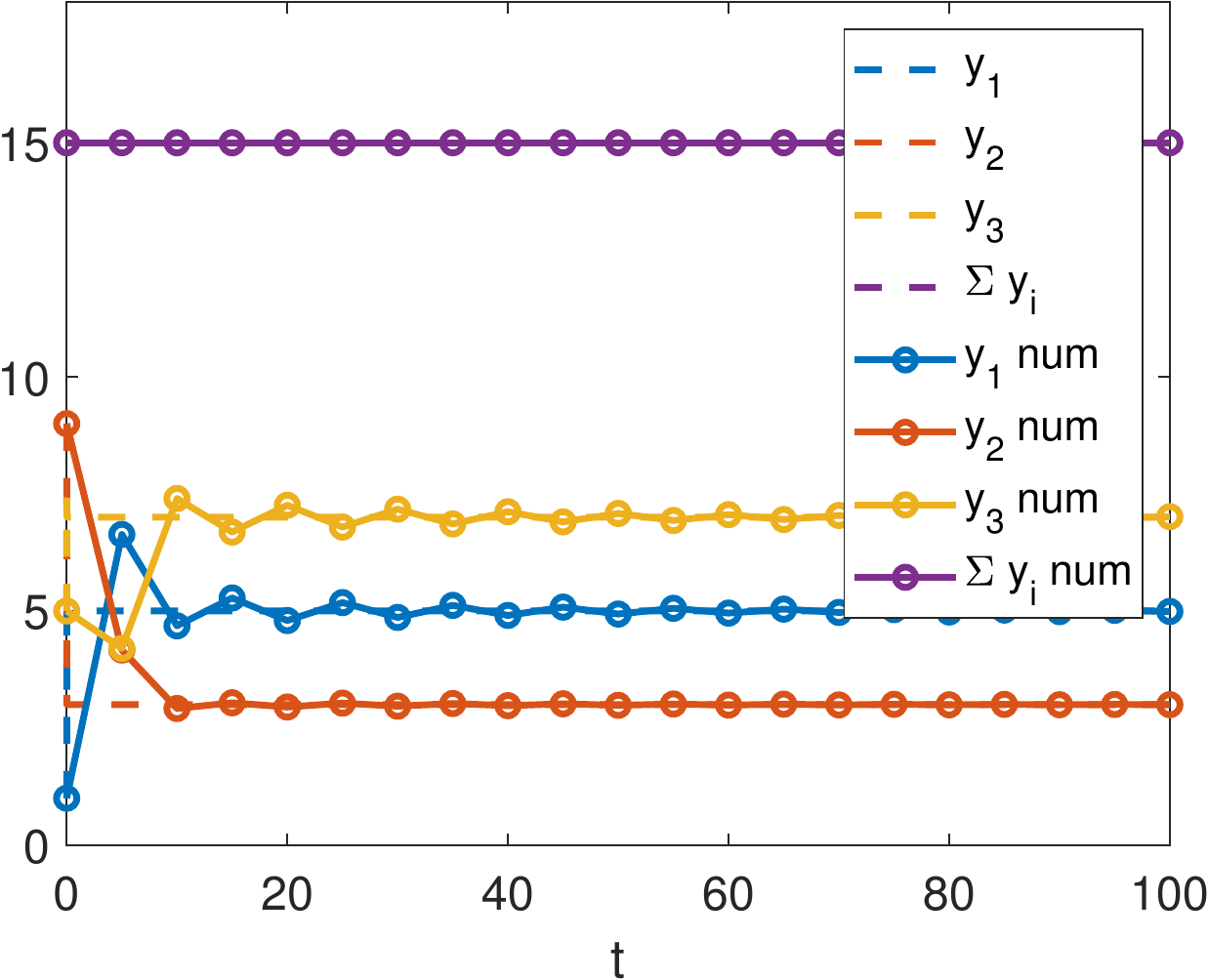}
		\subcaption{Approximation of \eqref{eq:initProbReal}}
	\end{subfigure}
	\begin{subfigure}[t]{0.49\textwidth}
		\includegraphics[width=\textwidth]{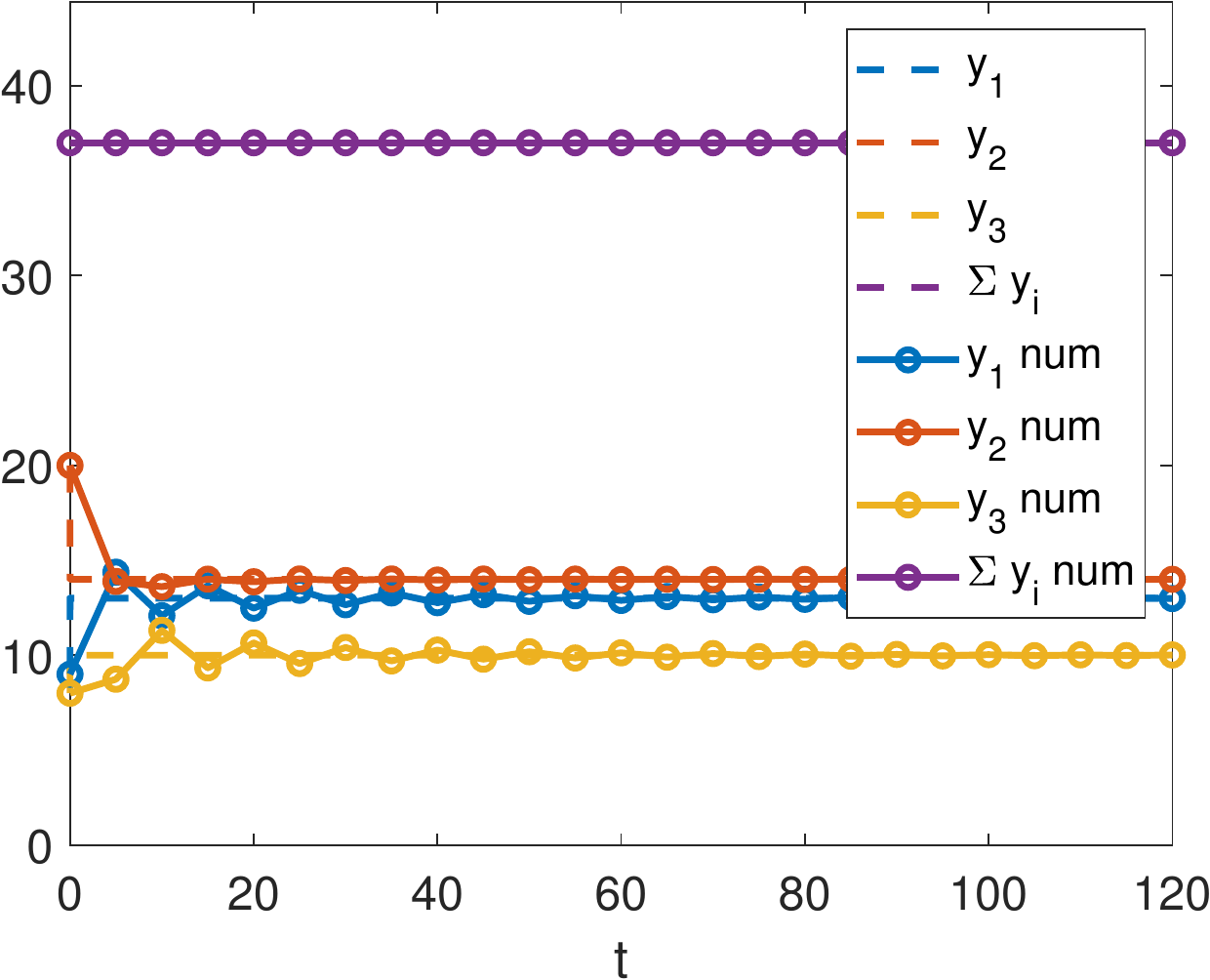}
		\subcaption{Approximation of \eqref{eq:initProbIm}}
	\end{subfigure}\\
	\begin{subfigure}[t]{0.49\textwidth}
		\includegraphics[width=\textwidth]{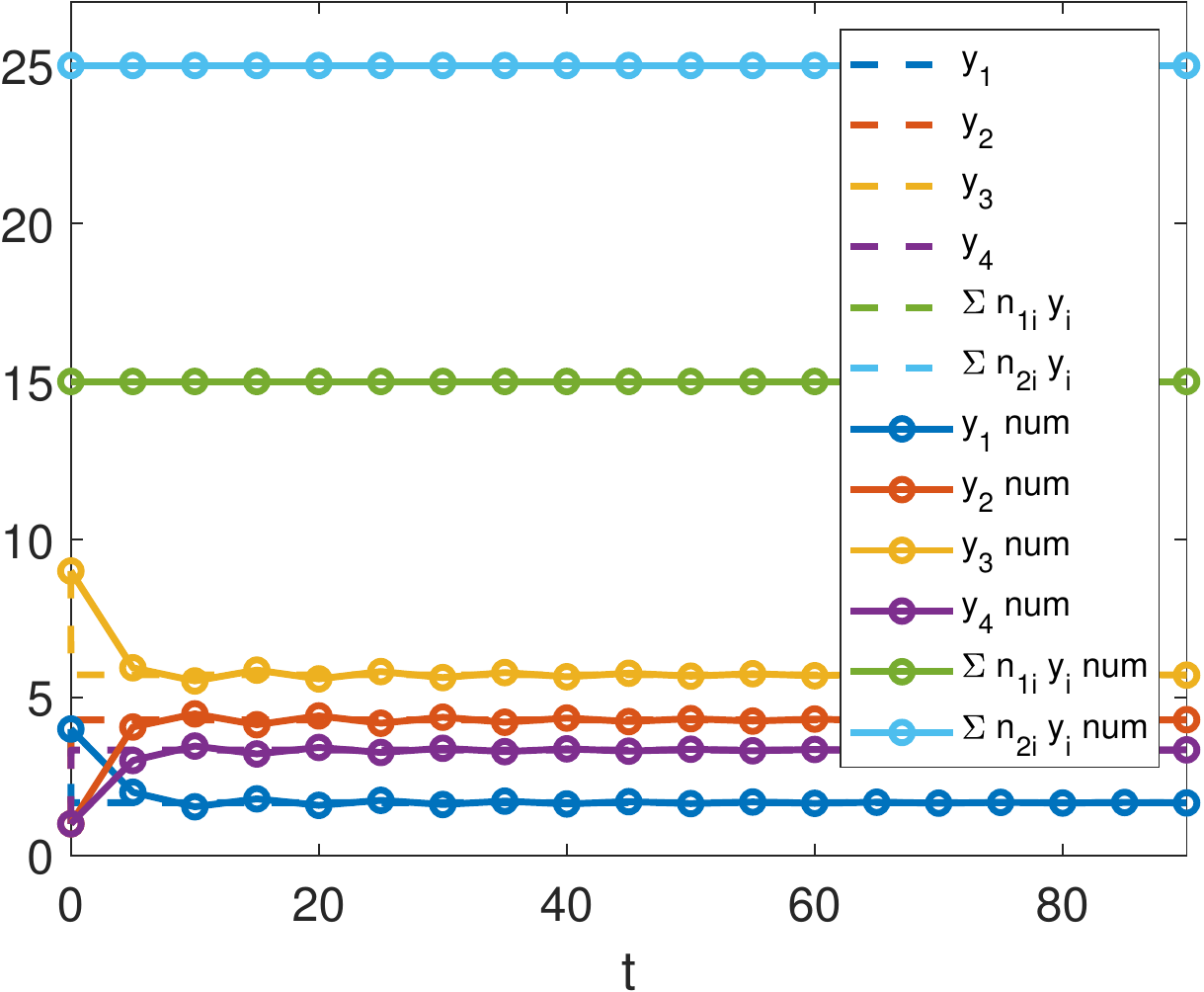}
		\subcaption{Approximation of \eqref{eq:initProb4dim}}
	\end{subfigure}
	\caption{Numerical approximations of \eqref{eq:initProbReal},\eqref{eq:initProbIm} and \eqref{eq:initProb4dim} using SSPMPRK3($\frac13$) schemes. The dashed lines indicate the exact solutions \eqref{eq:exsolReal}, \eqref{eq:exsolIm} and  \eqref{eq:exsol4dim}, where $\bn_1=\bm 1$ and $\bn_2=(1,2,2,1)^T$.}\label{Fig:SSPMPRK3initProblems}
\end{figure}
\begin{figure}[!h]
	\begin{subfigure}[t]{0.495\textwidth}
		\includegraphics[width=\textwidth]{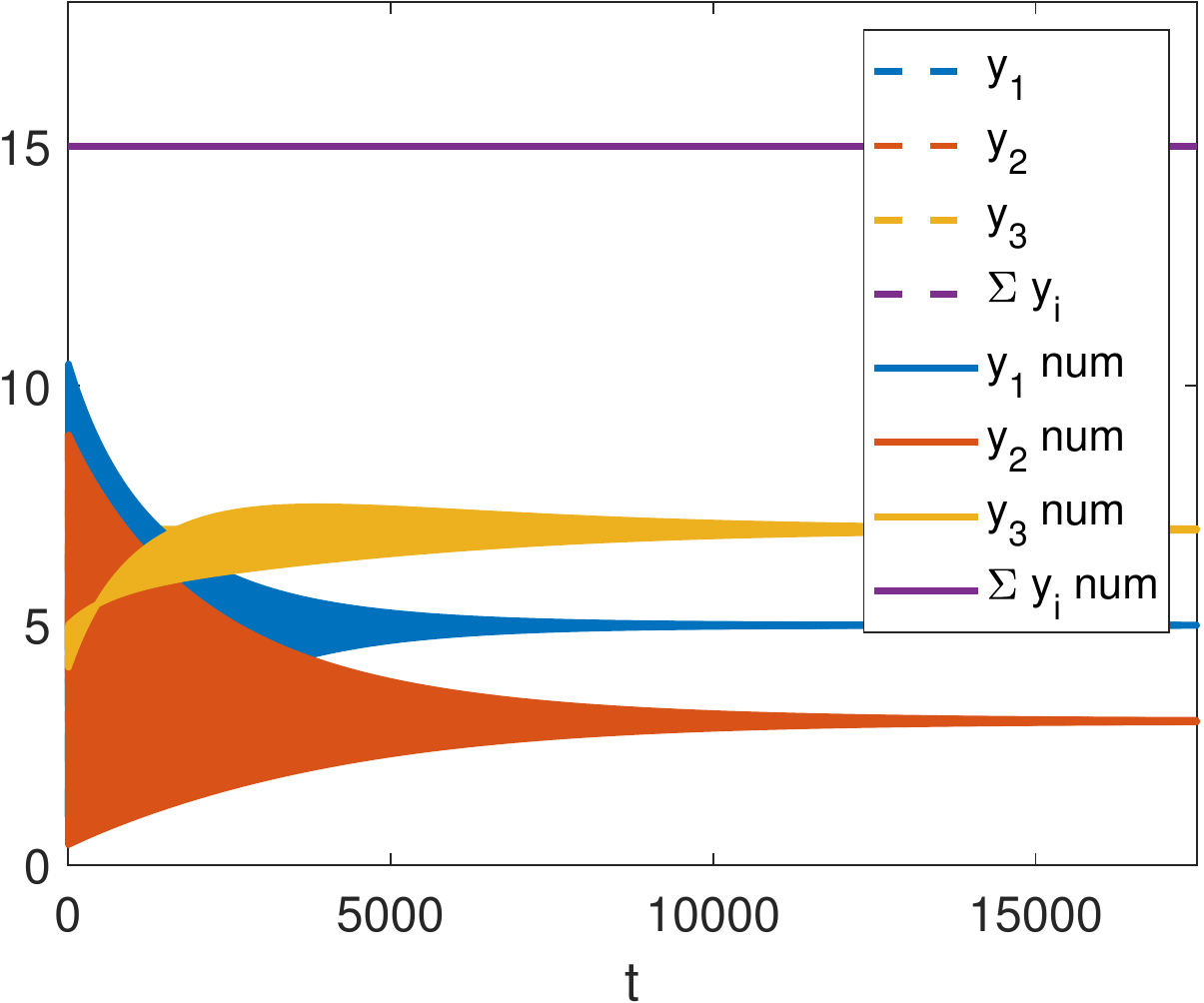}
		\subcaption{$(\alpha,\beta)=(\frac12,1)$, $\Delta t=5$}
	\end{subfigure}
	\begin{subfigure}[t]{0.495\textwidth}
		\includegraphics[width=\textwidth]{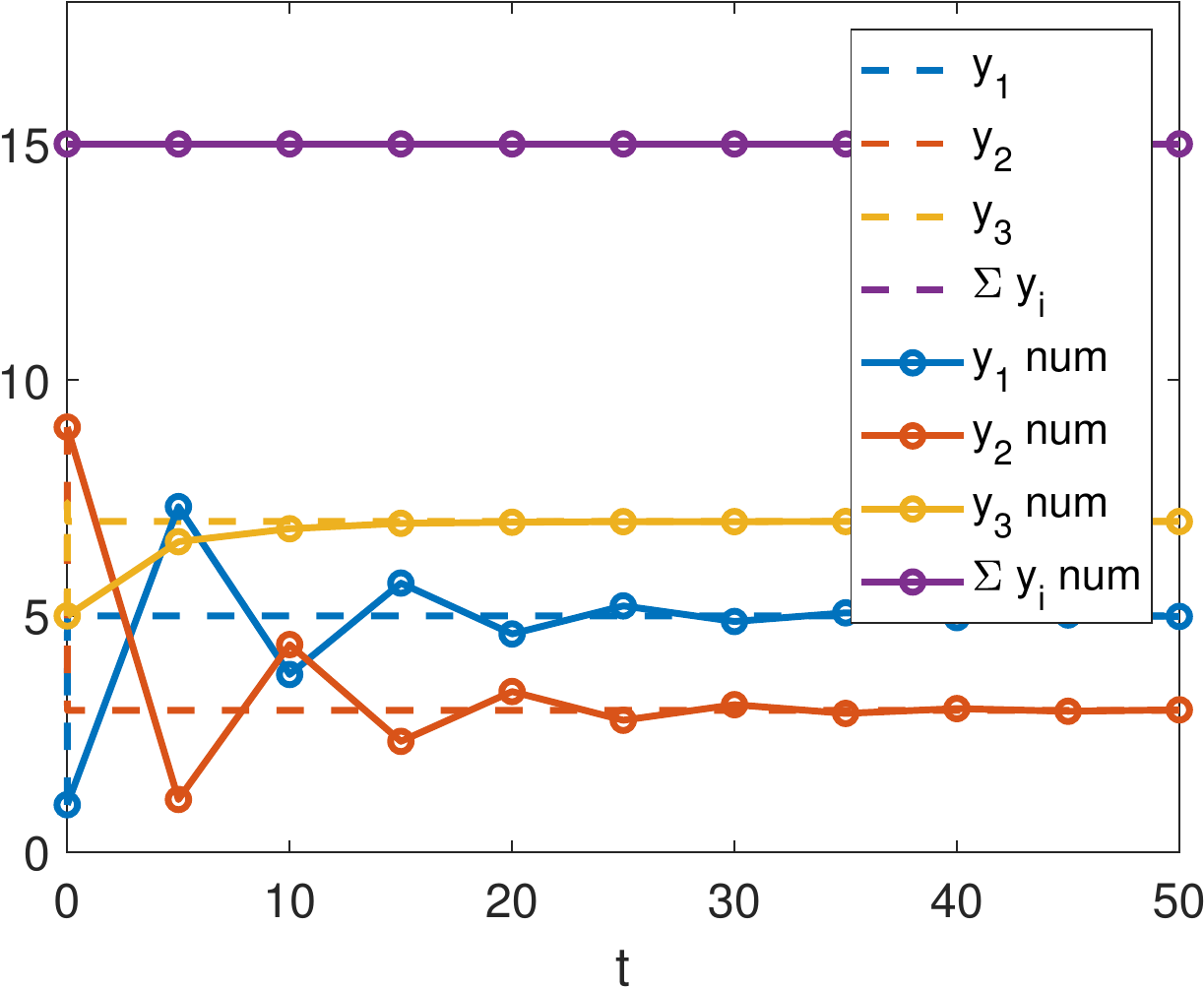}
		\subcaption{$(\alpha,\beta)=(0.1,1)$, $\Delta t=5$}
	\end{subfigure}\\
	\begin{subfigure}[t]{0.495\textwidth}
		\includegraphics[width=\textwidth]{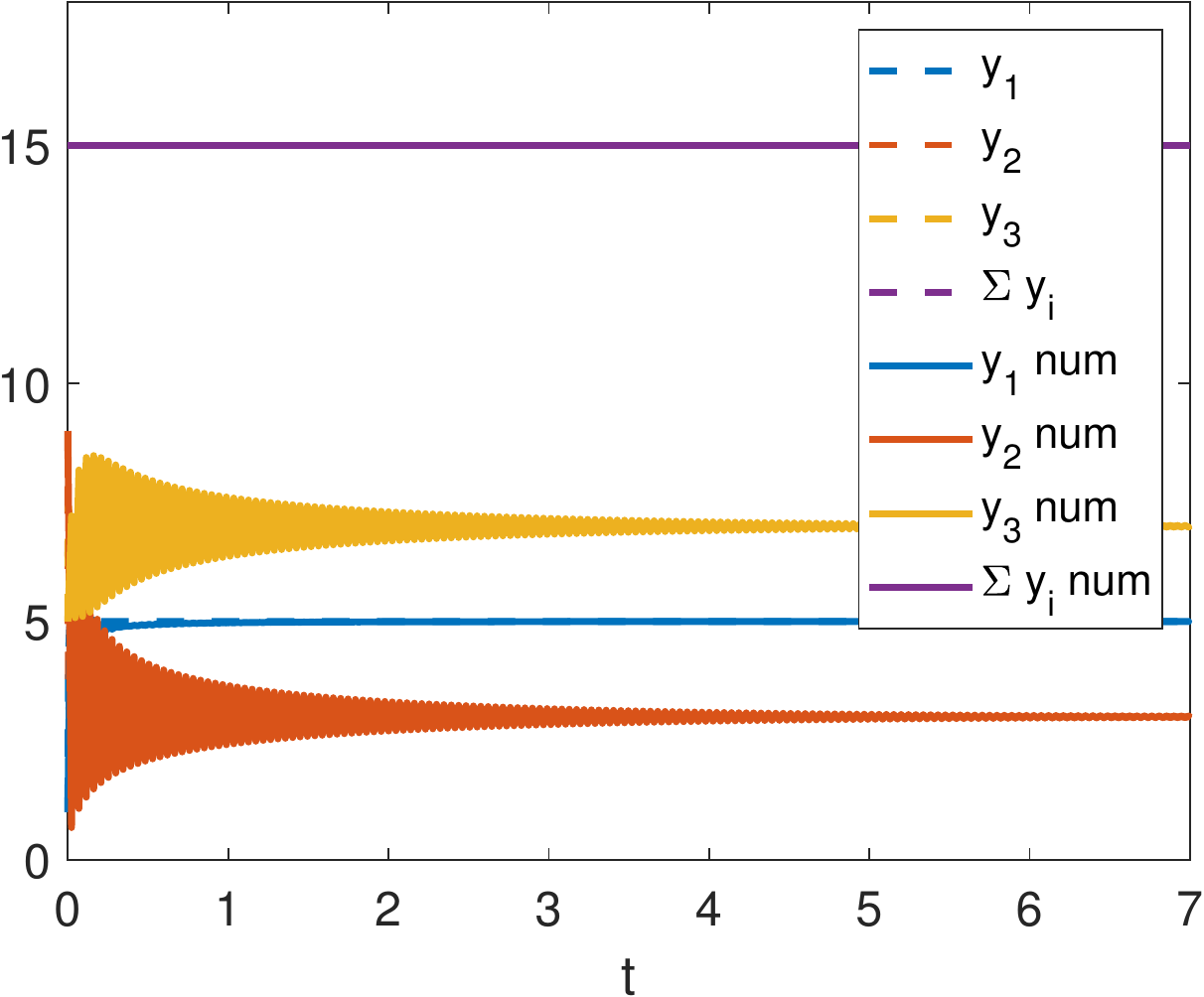}
		\subcaption{$(\alpha,\beta)=(0.2,3)$, $\Delta t\approx 0.023$}
	\end{subfigure}
	\begin{subfigure}[t]{0.495\textwidth}
		\includegraphics[width=\textwidth]{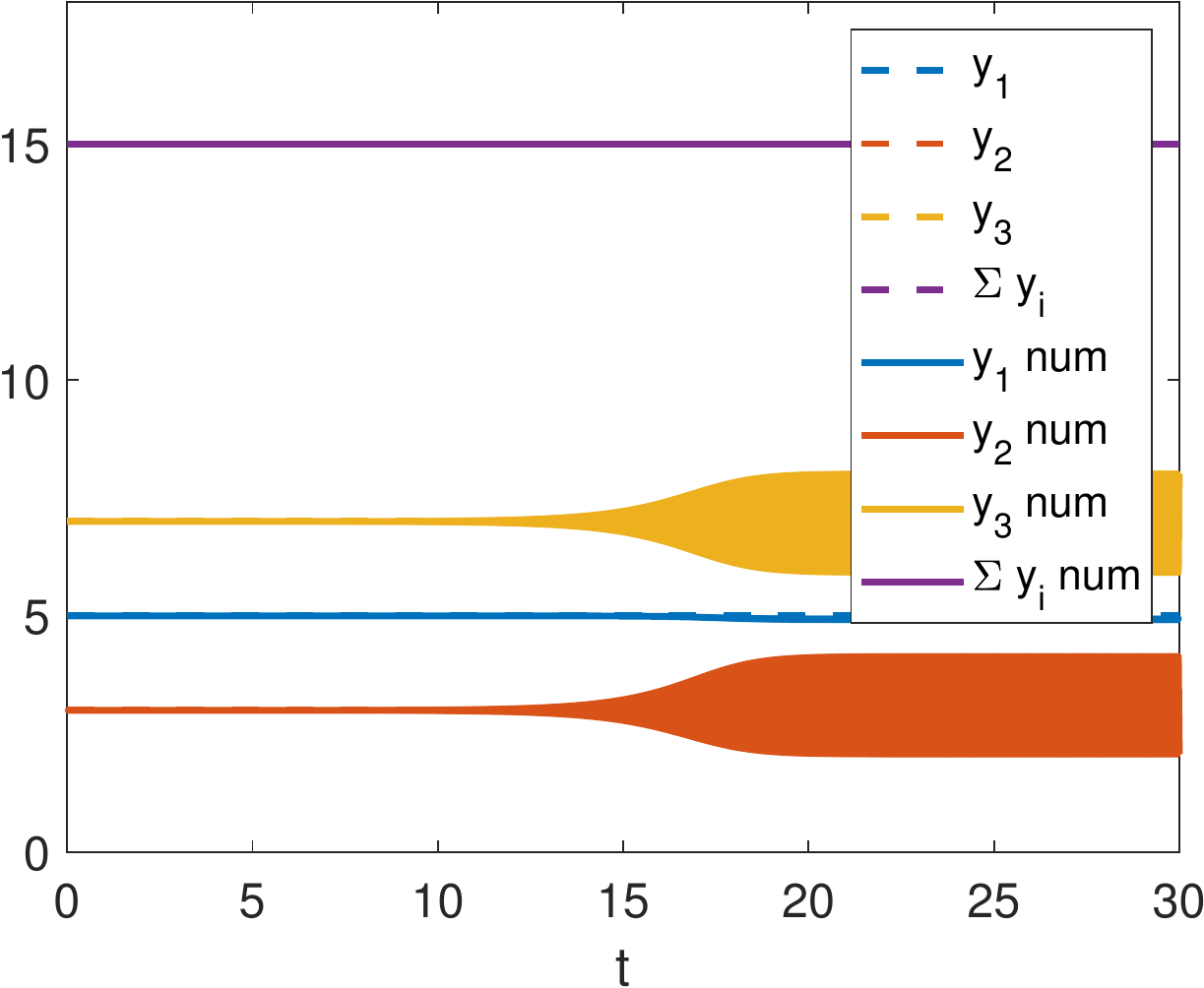}
		\subcaption{$(\alpha,\beta)=(0.2,3)$, $\Delta t\approx 0.025$}\label{Subfig:DReal}
	\end{subfigure}
	\caption{Numerical approximations of \eqref{eq:initProbReal} using the second order SSPMPRK scheme. The dashed lines indicate the exact solution \eqref{eq:exsolReal}. In \eqref{Subfig:DReal}, $\by^0=\by^*+10^{-5}(1,-2,1)^T$ is chosen. }\label{Fig:MPRKinitProbReal}
\end{figure}

\begin{figure}[!h]
	\begin{subfigure}[t]{0.495\textwidth}
		\includegraphics[width=\textwidth]{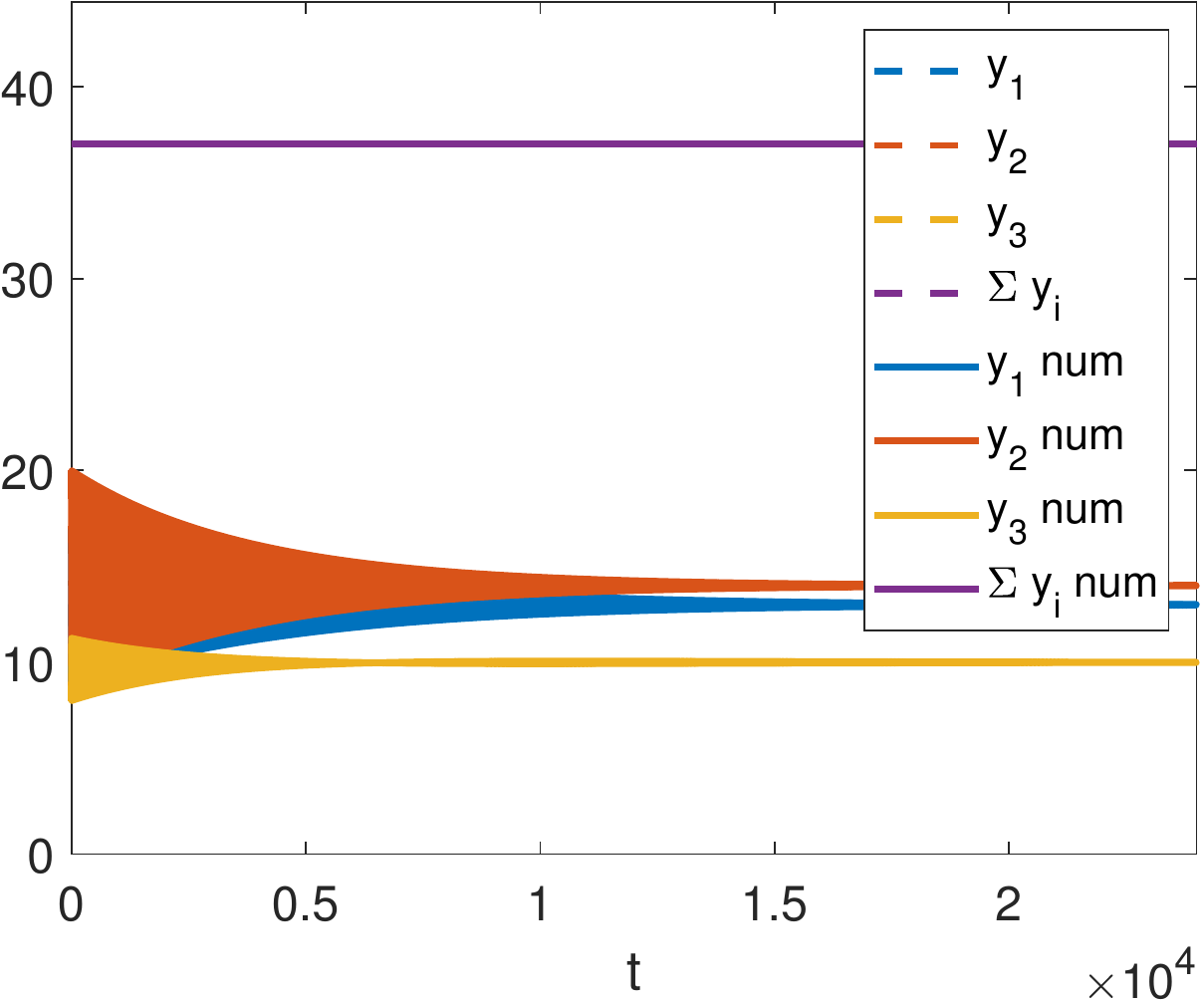}
		\subcaption{$(\alpha,\beta)=(\frac12,1)$, $\Delta t=5$}
	\end{subfigure}
	\begin{subfigure}[t]{0.495\textwidth}
		\includegraphics[width=\textwidth]{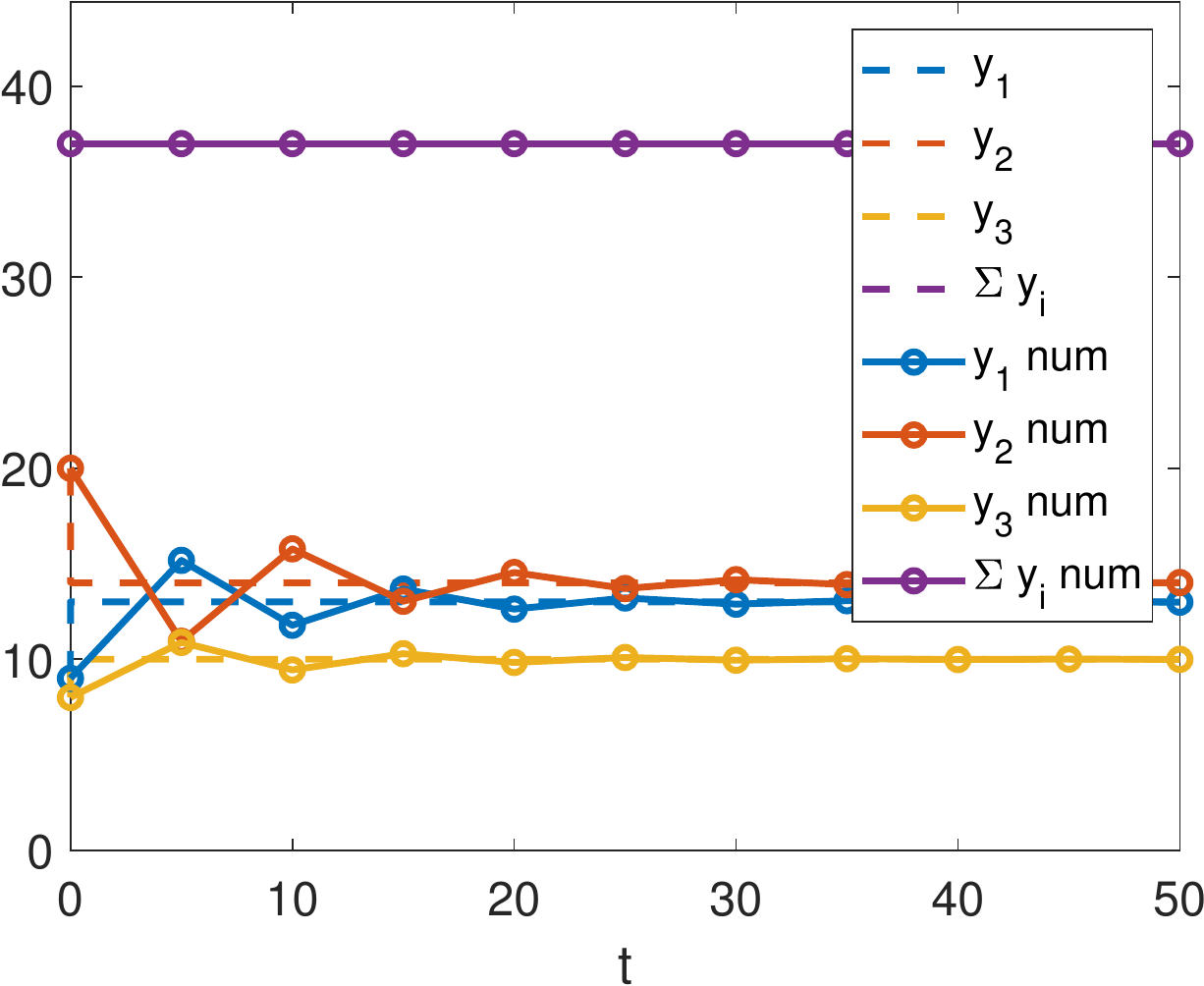}
		\subcaption{$(\alpha,\beta)=(0.1,1)$, $\Delta t=5$}
	\end{subfigure}\\
	\begin{subfigure}[t]{0.495\textwidth}
		\includegraphics[width=\textwidth]{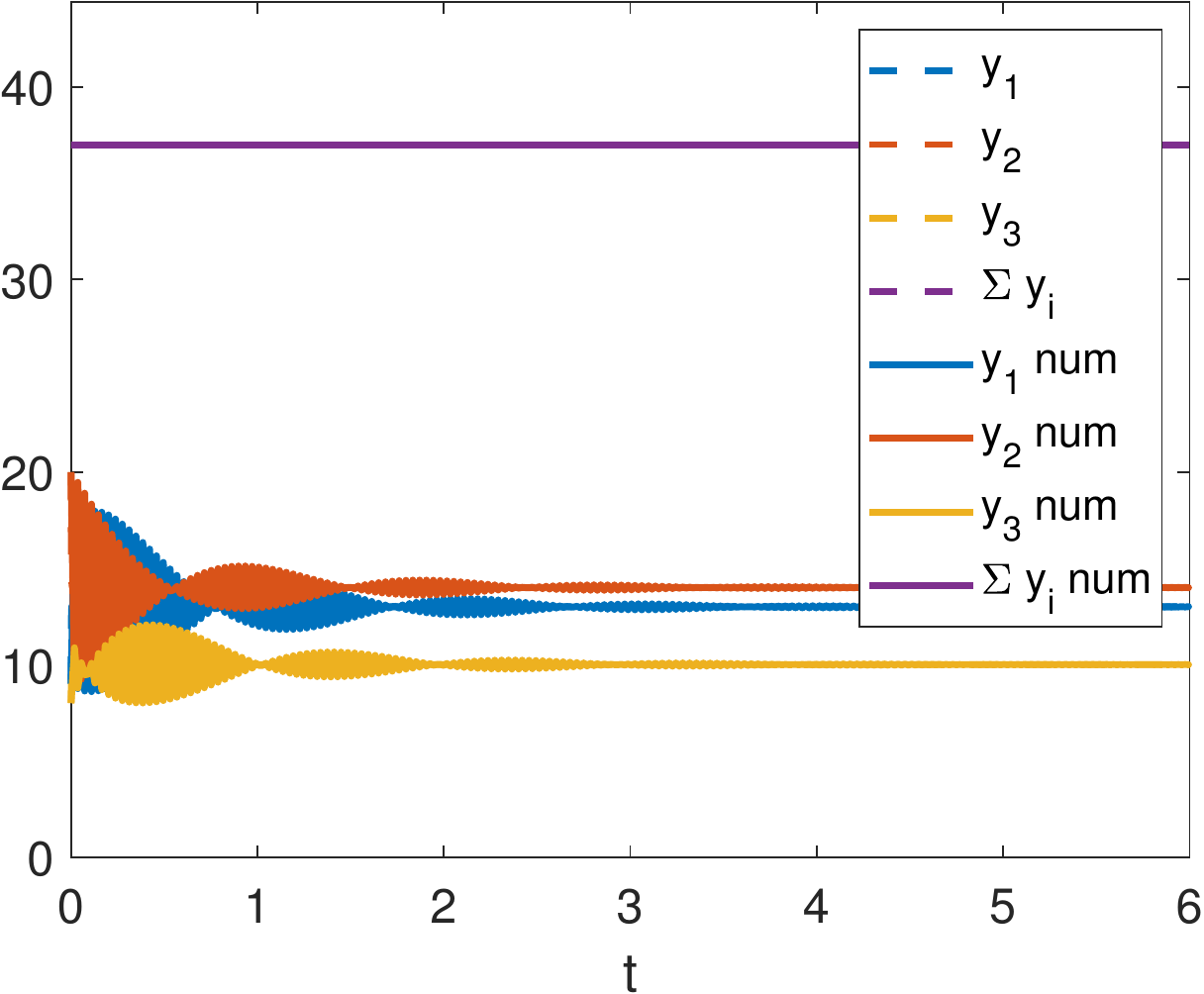}
		\subcaption{$(\alpha,\beta)=(0.2,3)$, $\Delta t\approx 0.018$}
	\end{subfigure}
	\begin{subfigure}[t]{0.495\textwidth}
		\includegraphics[width=\textwidth]{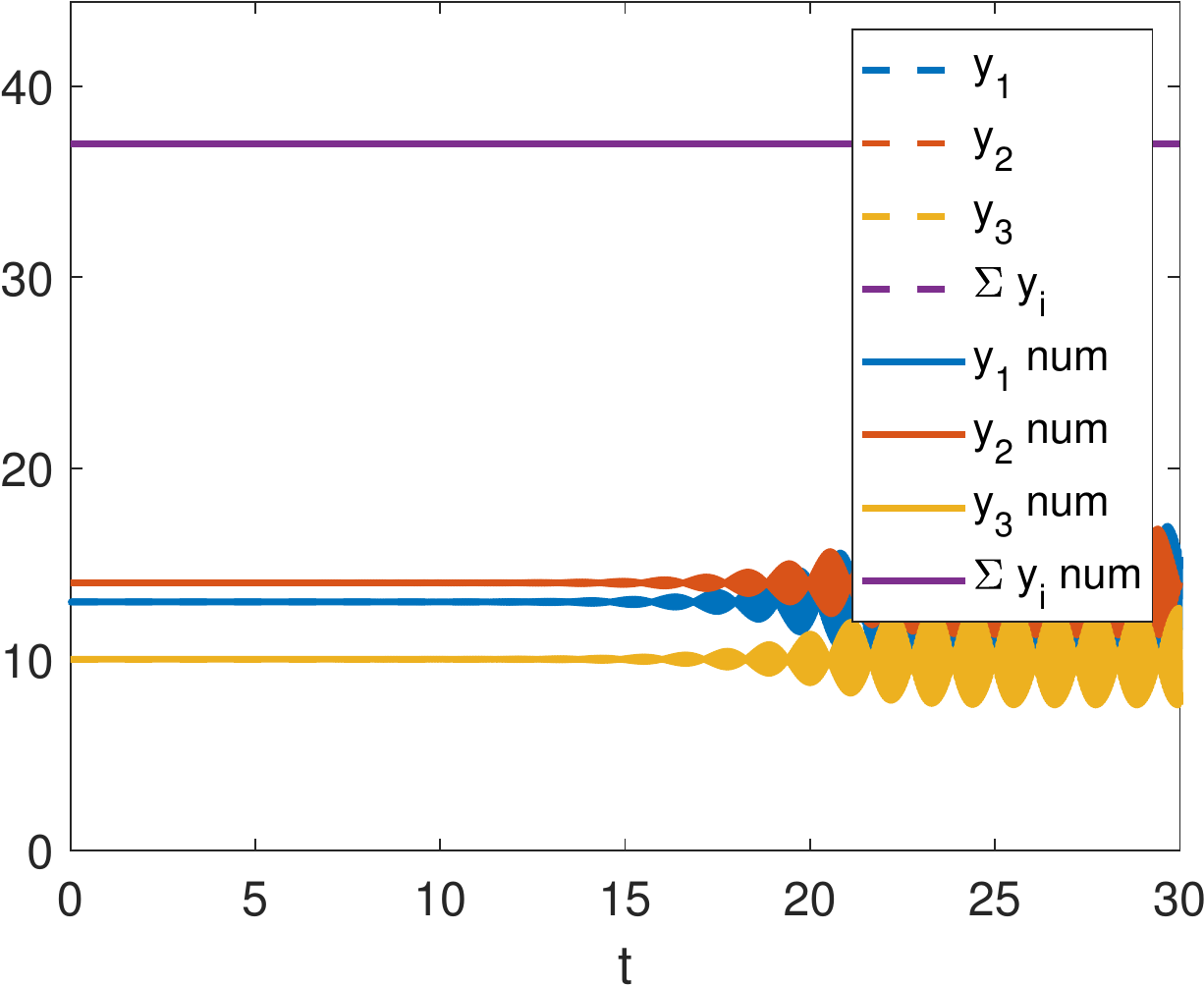}
		\subcaption{$(\alpha,\beta)=(0.2,3)$, $\Delta t\approx 0.020$}\label{Subfig:DiM}
	\end{subfigure}
	\caption{Numerical approximations of \eqref{eq:initProbIm} using  the second order SSPMPRK scheme. The dashed lines indicate the exact solution \eqref{eq:exsolIm}. In \eqref{Subfig:DiM}, $\by^0=\by^*+10^{-5}(1,-2,1)^T$ is chosen. }\label{Fig:MPRKinitProbIm}
\end{figure}

\begin{figure}[!h]
	\begin{subfigure}[t]{0.495\textwidth}
		\includegraphics[width=\textwidth]{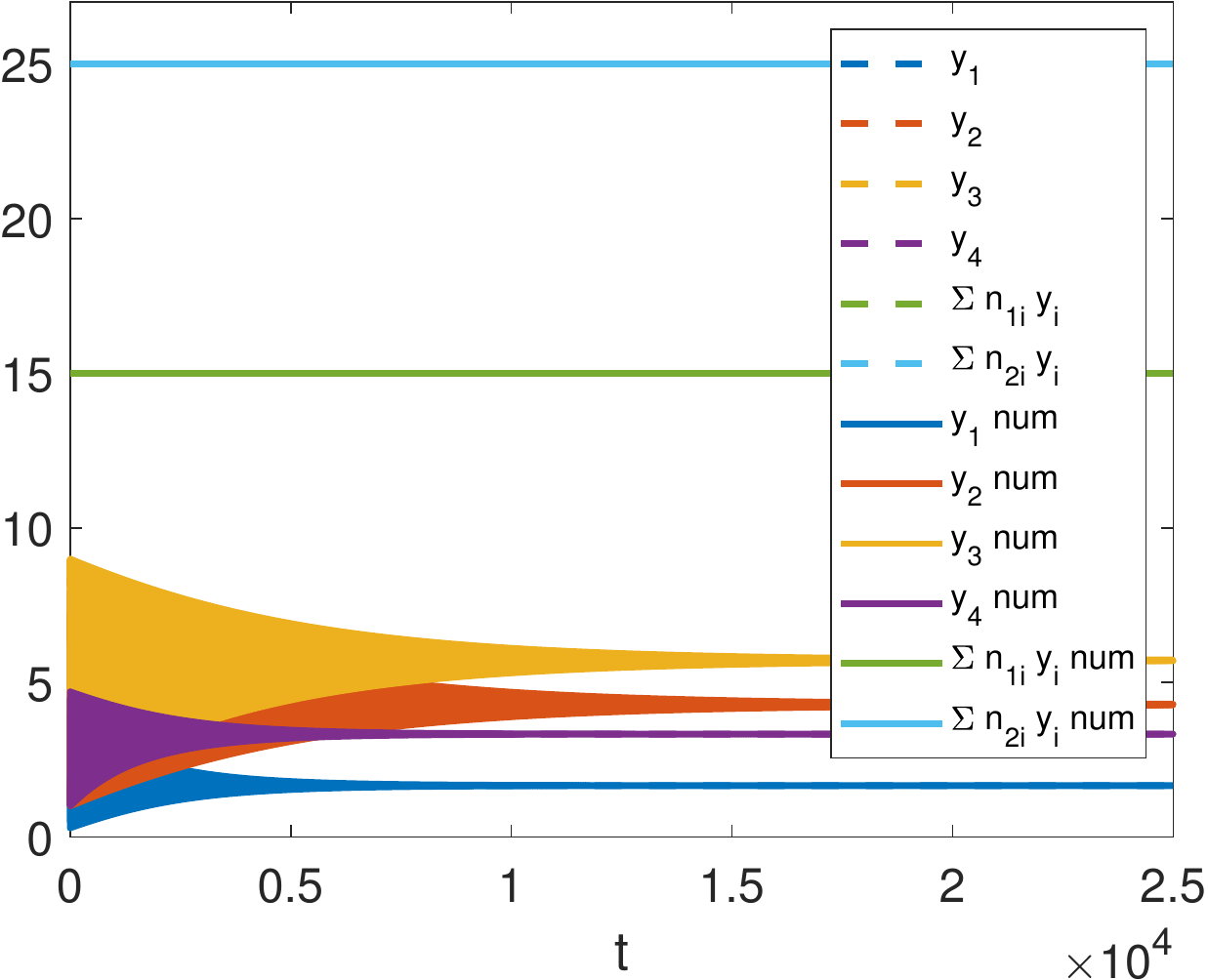}
		\subcaption{$(\alpha,\beta)=(\frac12,1)$, $\Delta t=5$}
	\end{subfigure}
	\begin{subfigure}[t]{0.495\textwidth}
		\includegraphics[width=\textwidth]{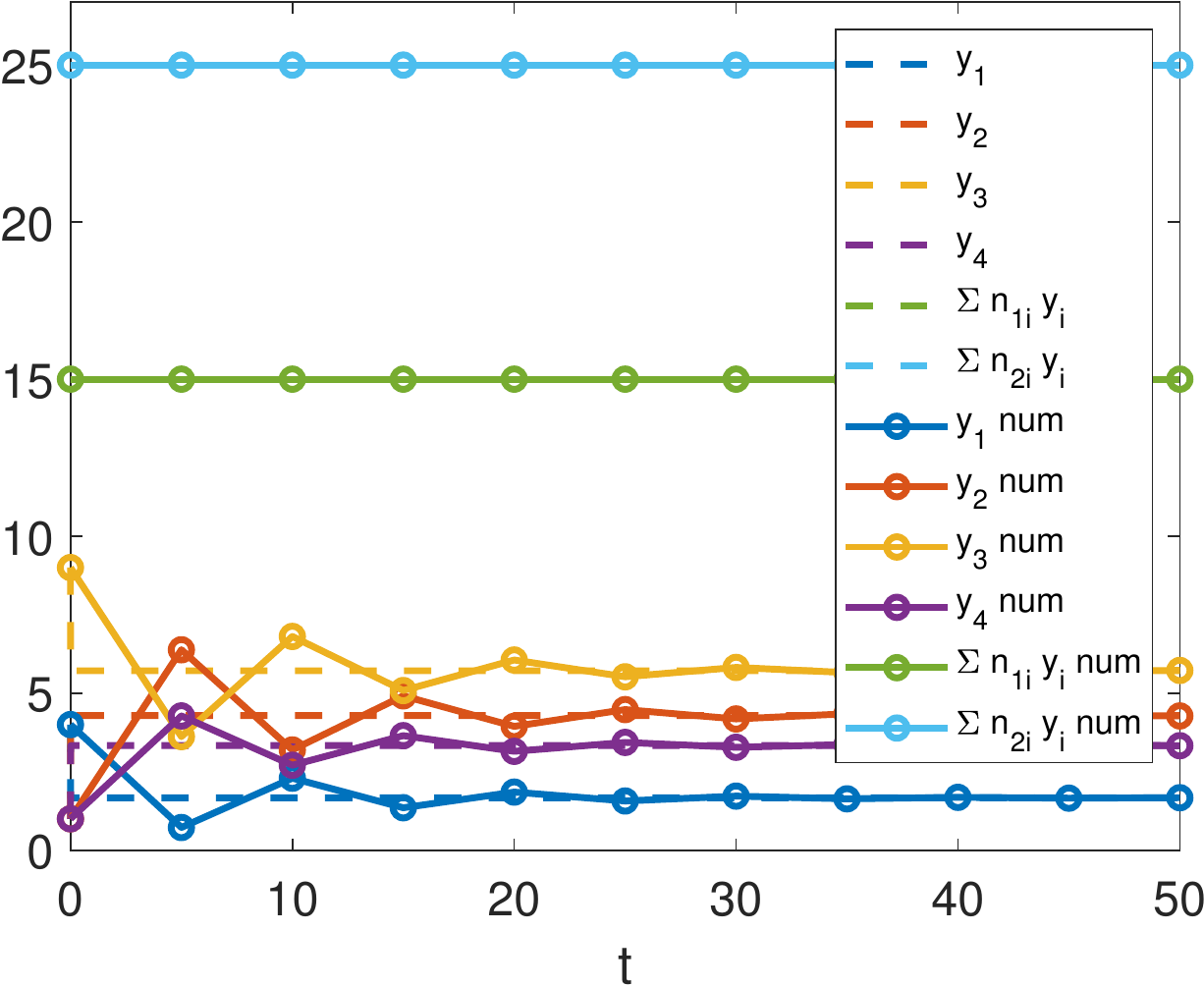}
		\subcaption{$(\alpha,\beta)=(0.1,1)$, $\Delta t=5$}
	\end{subfigure}\\
	\begin{subfigure}[t]{0.495\textwidth}
		\includegraphics[width=\textwidth]{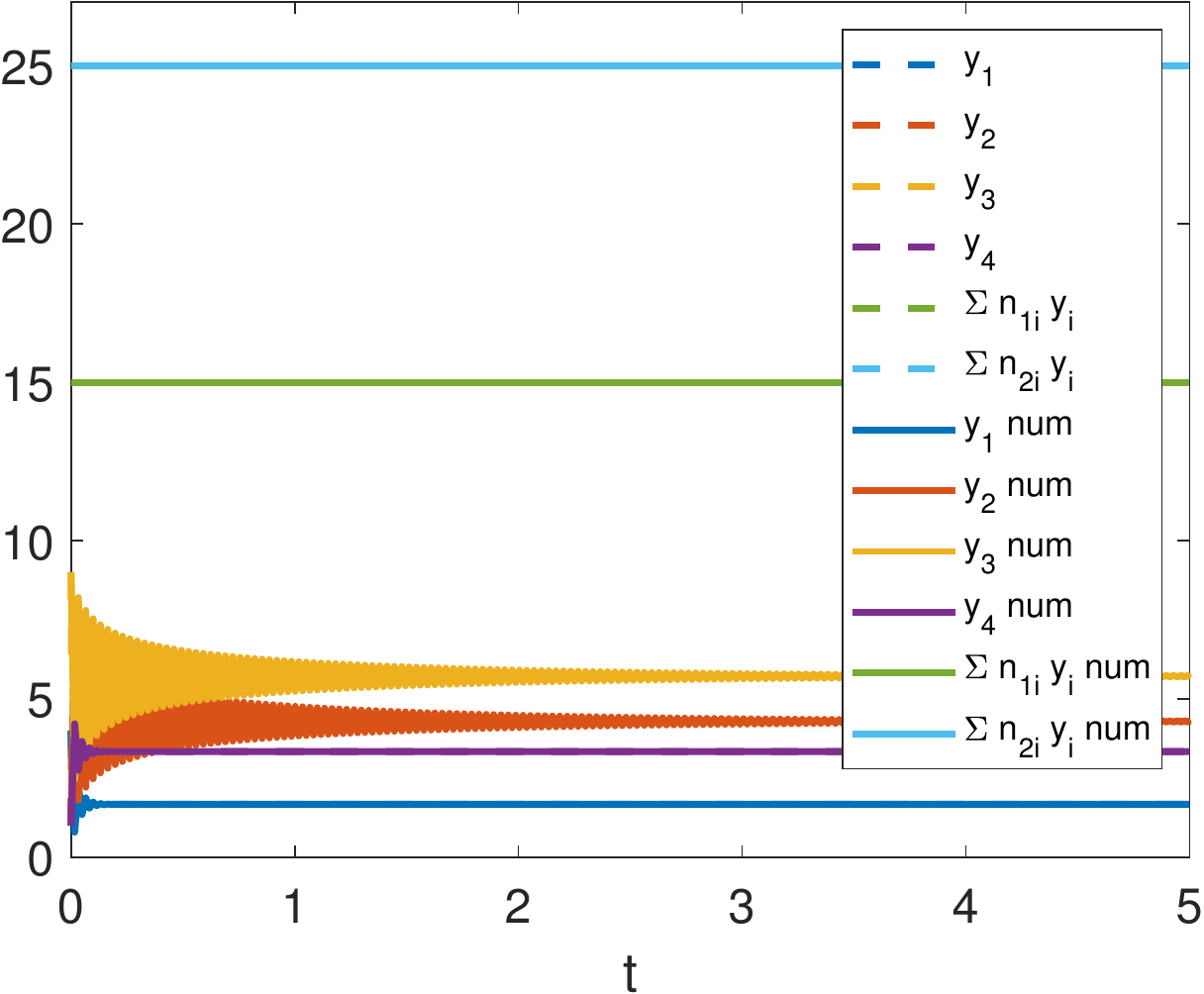}
		\subcaption{$(\alpha,\beta)=(0.2,3)$, $\Delta t\approx0.016$}
	\end{subfigure}
	\begin{subfigure}[t]{0.495\textwidth}
		\includegraphics[width=\textwidth]{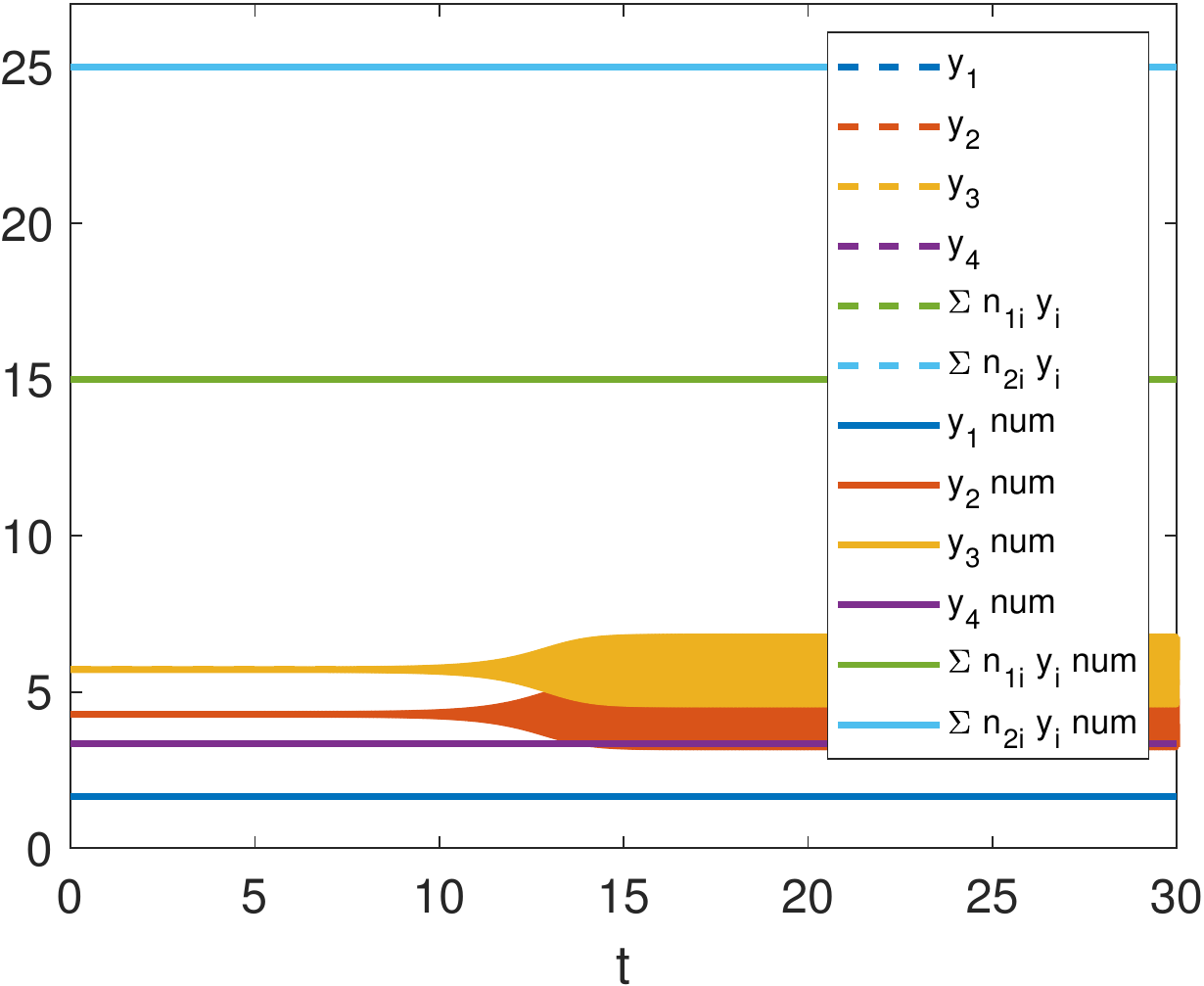}
		\subcaption{$(\alpha,\beta)=(0.2,3)$, $\Delta t\approx0.018$}\label{Subfig:D4dim}
	\end{subfigure}
	\caption{Numerical approximations of \eqref{eq:initProb4dim} using  the second order SSPMPRK scheme. The dashed lines indicate the exact solution \eqref{eq:exsol4dim}, where $\bn_1=\bm 1$ and $\bn_2=(1,2,2,1)^T$. In \eqref{Subfig:D4dim}, we used $\by^0=\by^*+10^{-5}(1,-1,1,-1)^T$. }\label{Fig:MPRKinitProb4dim}
\end{figure}
	
	\section{Summary and outlook}\label{sec:Summary}
	
We have performed stability analysis for a class of second and third order accurate
strong-stability-preserving modified Patankar Runge-Kutta (SSPMPRK) schemes
which are unconditionally positivity-preserving. This analysis allows us to identify
the range of free parameters in these SSPMPRK schemes in order to ensure stability.
Numerical experiments are provided to demonstrate the validity of the analysis. 

Here, we mention some possible future works, on applying such SSPMPRK schemes to
problems containing both convection and stiff source terms.  The convection terms
can be discretized by conservative, high resolution, essentially non-oscillatory techniques,
resulting in a very large ODE system to be discretized in time by the SSPMPRK schemes. 
The first interesting topic is to absorb the numerical fluxes from the convection terms 
into the production-destruction terms and then apply the SSPMPRK directly. Thus, the 
numerical fluxes are essentially multiplied by a factor, which may not be one, but 
should be close to one in smooth regions. The scheme should be positivity-preserving 
by design and should be high order accurate (with the worst scenario of losing at 
most one order because of the division by the spatial mesh size to the flux 
differences), but its effect on shock resolutions should be carefully assessed numerically 
and compared with the approach in \cite{MR3934688,MR3969000} in which the convection 
terms were treated by the standard high resolution schemes with SSP RK.
The second future work would be the extension of the stability analysis in this paper 
to the semi-discrete schemes arising from the multispecies reactive Euler equations. The 
difficulty is the increased complexity when the size of the ODE systems gets larger with 
spatial mesh refinements. These topics constitute our ongoing work.

	\section{Acknowledgements}
	The author Th.\ Izgin gratefully acknowledges the financial support by the Deutsche Forschungsgemeinschaft (DFG) through grant ME 1889/10-1. C.-W. Shu acknowledges support by NSF grant DMS-2010107 and AFOSR grant FA9550-20-1-0055.
	
	\bibliographystyle{plain} 
	\bibliography{cas-refs}

\begin{thebibliography}{10}

\bibitem{BDM2003}
Hans Burchard, Eric Deleersnijder, and Andreas Meister.
\newblock {A high-order conservative {P}atankar-type discretisation for stiff
  systems of production--destruction equations}.
\newblock {\em Applied Numerical Mathematics}, 47(1):1--30, 2003.

\bibitem{MR1912409}
Peter Deuflhard and Folkmar Bornemann.
\newblock {\em Scientific computing with ordinary differential equations},
  volume~42 of {\em Texts in Applied Mathematics}.
\newblock Springer-Verlag, New York, 2002.
\newblock Translated from the 1994 German original by Werner C. Rheinboldt.

\bibitem{gottlieb2001strong}
Sigal Gottlieb, Chi-Wang Shu, and Eitan Tadmor.
\newblock Strong stability-preserving high-order time discretization methods.
\newblock {\em SIAM Review}, 43(1):89--112, 2001.

\bibitem{MR3934688}
Juntao Huang and Chi-Wang Shu.
\newblock Positivity-preserving time discretizations for production-destruction
  equations with applications to non-equilibrium flows.
\newblock {\em J. Sci. Comput.}, 78(3):1811--1839, 2019.

\bibitem{MR3969000}
Juntao Huang, Weifeng Zhao, and Chi-Wang Shu.
\newblock A third-order unconditionally positivity-preserving scheme for
  production-destruction equations with applications to non-equilibrium flows.
\newblock {\em J. Sci. Comput.}, 79(2):1015--1056, 2019.

\bibitem{IKM22Sys}
Thomas Izgin, Stefan Kopecz, and Andreas Meister.
\newblock On the stability of unconditionally positive and linear invariants
  preserving time integration schemes, 2022.

\bibitem{KM18}
Stefan Kopecz and Andreas Meister.
\newblock On order conditions for modified {P}atankar--{R}unge--{K}utta
  schemes.
\newblock {\em Applied Numerical Mathematics}, 123:159--179, 2018.

\bibitem{KM18Order3}
Stefan Kopecz and Andreas Meister.
\newblock Unconditionally positive and conservative third order modified
  {P}atankar--{R}unge--{K}utta discretizations of production--destruction
  systems.
\newblock {\em BIT Numerical Mathematics}, 58(3):691--728, 2018.

\bibitem{Luen79}
David~G. Luenberger and David~C. Luenberger.
\newblock {\em Introduction to Dynamic Systems - Theory, Models, and
  Applications}.
\newblock Wiley, New York, 1979.

\bibitem{MR4064785}
Philipp \"{O}ffner and Davide Torlo.
\newblock Arbitrary high-order, conservative and positivity preserving
  {P}atankar-type deferred correction schemes.
\newblock {\em Appl. Numer. Math.}, 153:15--34, 2020.

\bibitem{shu1988efficient}
Chi-Wang Shu and Stanley Osher.
\newblock Efficient implementation of essentially non-oscillatory
  shock-capturing schemes.
\newblock {\em Journal of Computational Physics}, 77(2):439--471, 1988.

\bibitem{SH98}
Andrew Stuart and Anthony~R Humphries.
\newblock {\em Dynamical systems and numerical analysis}, volume~2.
\newblock Cambridge University Press, 1998.

\end{thebibliography}

%
%

\end{document}